\documentclass[11pt,fleqn]{article}
\usepackage{amsmath,amsfonts,amssymb,calrsfs,amsthm}

\def\C{\mathbb C}

\def\R{\mathbb R}
\def\E{\mathbb E}

\def\Z{\mathbb Z}
\newcommand{\eps}{\varepsilon}
\def\bin#1#2{{#1\choose#2}}
\def\ds{\displaystyle}

\theoremstyle{plain}
\newtheorem{thm}{Theorem}[section]
\newtheorem{lemma}[thm]{Lemma}
\newtheorem{cor}[thm]{Corollary}
\newtheorem{example}[thm]{Example}
\newtheorem{definition}[thm]{Definition}
\newtheorem{proposition}[thm]{Proposition}
\theoremstyle{remark}
\newtheorem{remark}[thm]{Remark}

\author{Luciano Tubaro\\
}
\title{A simple method of computing Riemann-type integrals}
\date{}

\begin{document}
\sloppy
\maketitle

\begin{abstract}
The ``Riemann-type integrals'' of the title are not a single newly
defined integral, but the family of classical Riemann-sum constructions
--- Riemann, Riemann--Stieltjes, Young--Kondurar, complex-analytic, and
It\^o --- unified by the method below; we make this explicit at the
outset to avoid any suggestion of a single new integral being proposed.

In this article we show a very simple argument to compute directly the
Riemann integral of a polynomial and to establish directly the
``fundamental theorem of integral calculus'' for polynomials (and
subsequently for continuous functions); the argument may be of interest
from a didactic point of view. The same idea is applied to the real
line, to the complex plane, to the multidimensional case, to the
Riemann--Stieltjes integral, and finally to the It\^o integral of a
polynomial of a Wiener process (and, more generally, of a continuous
semimartingale), where one recovers the classical It\^o formula.

The contribution is methodological rather than a collection of
individually new results: a single algebraic telescoping identity, once
paired with an estimate on the quadratic remainder term
$\sum_i(\text{weight})_i\,(\Delta_i)^2$, drives every case treated here.
What changes from case to case is only the fate of that quadratic term:
it vanishes identically for polynomials on $\R$; it vanishes in the
limit whenever the underlying net has $Q(\pi)\to0$, on $\C$ and for
H\"older paths; and it neither vanishes nor is negligible for a
continuous semimartingale, where it survives as the quadratic variation
driving the It\^o correction term --- except for a planar Brownian
motion, or more generally a conformal martingale, where a purely
algebraic cancellation inside the complex square removes it again. We
regard this last point -- that the classical It\^o correction and its
complete cancellation in the conformal complex case are two instances of
the very same quadratic remainder, rather than two unrelated facts about
Brownian motion -- as the most distinctive single observation in the
paper, and return to it explicitly in the concluding remarks of
Part~III. The manuscript is organized in three parts accordingly:
Part~I develops the
core method on the real line and, via the same mechanism, on the complex
plane; Part~II extends it deterministically to the multidimensional,
Riemann--Stieltjes, and H\"older (Young--Kondurar) cases; Part~III
extends it to the stochastic setting, where the quadratic term's
non-vanishing is precisely what produces the It\^o correction.
\end{abstract}

\part{Core method}

\section{The real line}

Let $[a,b]$ be a finite interval of the real line and $f\colon[a,b]\to\R$
a bounded function. In order to define the \emph{Riemann sums} of $f$,
consider the family ${\cal P}(a,b)$ of all finite partitions
$a=t_0<t_1<\cdots<t_N=b$ of $[a,b]$, and in every subinterval
$[t_i,t_{i+1}]$ choose an arbitrary tag $\tau_i$; the corresponding
Riemann sum is
\[
\sum_{i=0}^{N-1}f(\tau_i)\,(t_{i+1}-t_i).
\]
On each tagged $\pi\in{\cal P}(a,b)$ we consider the structure of directed
set with respect to the mesh
\[
\delta(\pi)=\max_{0\le i\le N-1}(t_{i+1}-t_i);
\]
in this way the Riemann sums form a net on the directed set. The function
$f$ belongs to the class ${\cal R}$ of Riemann integrable functions if the
corresponding net of Riemann sums converges in $\R$ as $\delta(\pi)\to0$.
It is easy to verify that ${\cal R}$ is a vector space, containing all
the constants.

\subsection{The key result}\label{sec:real-key}

Given $\pi\in{\cal P}(a,b)$, say $a=t_0<t_1<\cdots<t_N=b$, and a fixed
integer $m$, introduce the $m+1$ numbers
\[
I_k(\pi)=\ds\sum_{i=0}^{N-1}t_i^{m-k}\,t_{i+1}^k(t_{i+1}-t_i),
\qquad k=0,\ldots,m.
\]

\begin{lemma}\label{lem:real1}
As $\delta(\pi)\to 0$, all the numbers $I_k(\pi)$ tend to the same limit
$L=\dfrac{b^{m+1}-a^{m+1}}{m+1}$.
\end{lemma}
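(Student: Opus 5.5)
The plan rests on two facts. The sum of all the $I_k(\pi)$ telescopes exactly, and any two consecutive $I_k$ differ by a quantity that tends to zero. For the first fact, the algebraic identity
\[
\sum_{k=0}^{m} t_i^{m-k}t_{i+1}^k\,(t_{i+1}-t_i)=t_{i+1}^{m+1}-t_i^{m+1}
\]
holds for each subinterval. Summing it over $i=0,\ldots,N-1$ telescopes, so that
\[
\sum_{k=0}^{m}I_k(\pi)=b^{m+1}-a^{m+1}
\]
for \emph{every} partition $\pi$, independently of the mesh. Dividing by $m+1$, the arithmetic mean of the $m+1$ numbers $I_k(\pi)$ is exactly $L$.

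Next I will show that the spread between the $I_k(\pi)$ vanishes as $\delta(\pi)\to0$. For $1\le k\le m$,
\[
I_k(\pi)-I_{k-1}(\pi)=\sum_{i=0}^{N-1}t_i^{m-k}t_{i+1}^{k-1}(t_{i+1}-t_i)^2 .
\]
This is the quadratic remainder mentioned in the abstract. Set $M=\max(|a|,|b|)$, so that $|t_i|,|t_{i+1}|\le M$. The weights are then bounded by $M^{m-1}$ (read this as $1$ when $m=1$). Also $\sum_i (t_{i+1}-t_i)^2\le \delta(\pi)(b-a)$. Hence
\[
|I_k(\pi)-I_{k-1}(\pi)|\le M^{m-1}(b-a)\,\delta(\pi),
\]
and therefore $|I_k(\pi)-I_j(\pi)|\le m\,M^{m-1}(b-a)\,\delta(\pi)$ for all $j,k$.

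To conclude, each $I_k(\pi)$ lies within the maximal spread of the average of all of them. That average equals $L$ by the first step. So
\[
|I_k(\pi)-L|\le m\,M^{m-1}(b-a)\,\delta(\pi)\to0
\]
for every $k$. The cases $m=0$ (where $I_0=b-a$ trivially) and negative $a$ need no special treatment, since only absolute values enter the bound.

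I expect no real obstacle here. The only point requiring care is the bound on the quadratic term $\sum_i(\Delta_i)^2\le\delta(\pi)\sum_i\Delta_i$, together with a uniform bound on the weights. It is worth recording this explicitly, since it is exactly the step that changes in the later (Stieltjes, H\"older, It\^o) settings.
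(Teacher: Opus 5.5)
Your proof is correct and is essentially the paper's argument: the exact telescoping identity $\sum_{k=0}^m I_k(\pi)=b^{m+1}-a^{m+1}$, combined with an $O(\delta(\pi))$ bound on the quadratic remainder via $\sum_i (t_{i+1}-t_i)^2\le(b-a)\,\delta(\pi)$. The only cosmetic difference is that you bound the consecutive differences $I_k-I_{k-1}$, whose weights are single monomials, and then compare each $I_k$ with the average, whereas the paper bounds $\rho_k=I_k-I_0$ directly with its geometric-sum weight and the constant $m(|a|+|b-a|)^{m-1}$.
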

\begin{proof}
Using the identity
\[
\lambda^{m+1}-\mu^{m+1}=(\lambda-\mu)(\lambda^m+\lambda^{m-1}\mu+\cdots+
\lambda\mu^{m-1}+\mu^m),
\]
it is easy to check that the sum of these quantities is the constant
\begin{equation}\label{real:e0}
I_0(\pi)+I_1(\pi)+\cdots+I_m(\pi)=b^{m+1}-a^{m+1}.
\end{equation}
For $1\le k\le m$ write $I_k(\pi)=I_0(\pi)+\rho_k(\pi)$, so that
\eqref{real:e0} becomes
\[
(m+1)I_0(\pi)+\sum_{k=1}^m\rho_k(\pi)=b^{m+1}-a^{m+1}.
\]
Now
\begin{align*}
\rho_k(\pi)&=\sum_{i=0}^{N-1}\big(t_i^{m-k}t_{i+1}^k-t_i^m\big)(t_{i+1}-t_i)\\
&=\sum_{i=0}^{N-1}t_i^{m-k}\big(t_{i+1}^{k-1}+t_{i+1}^{k-2}t_i+\cdots+t_i^{k-1}\big)(t_{i+1}-t_i)^2,
\end{align*}
and since every $t_i,t_{i+1}\in[a,b]$, there is a constant
$M=m(|a|+|b-a|)^{m-1}$, independent of $\pi$, such that
\[
|\rho_k(\pi)|\le M\sum_{i=0}^{N-1}(t_{i+1}-t_i)^2\le M(b-a)\,\delta(\pi).
\]
Hence, for $\delta(\pi)\to0$, all $\rho_k(\pi)\to0$ and consequently, from
\eqref{real:e0},
\[
\big|(m+1)I_0(\pi)-(b^{m+1}-a^{m+1})\big|
=\Big|\sum_{k=1}^m\rho_k(\pi)\Big|\le mM(b-a)\,\delta(\pi):
\]
every $I_k(\pi)$ converges to the same limit
\[
I_k(\pi)\longrightarrow L=\frac{b^{m+1}-a^{m+1}}{m+1}. \qedhere
\]
\end{proof}

\begin{remark}\label{rem:dense}
Lemma~\ref{lem:real1} remains valid if the partitions $\pi\in{\cal P}(a,b)$
are restricted to have nodes $t_i$ in $S\cap[a,b]$, where $S$ is an
arbitrary countable dense subset of $\R$: the estimate on $\rho_k(\pi)$
only uses that the $t_i$ lie in $[a,b]$, so it holds unchanged for this
restricted family, which is still directed as $\delta(\pi)\to0$ since $S$
is dense. This shows that the construction only relies on the density of
$S$ in $\R$, in contrast with the classical definition of the Riemann
integral for continuous functions, which invokes uniform continuity via
the compactness (hence completeness) of $[a,b]$.
\end{remark}

\subsection{Polynomials belong to the class ${\cal R}$}

In every subinterval $[t_i,t_{i+1}]$ choose an arbitrary tag
$\tau_i\in[t_i,t_{i+1}]$.

\begin{lemma}\label{lem:real2}
With the same constant $M$ as in Lemma~\ref{lem:real1},
\[
\Big|\sum_{i=0}^{N-1}\tau_i^m(t_{i+1}-t_i)-\sum_{i=0}^{N-1}t_i^m(t_{i+1}-t_i)\Big|
\le M(b-a)\,\delta(\pi).
\]
\end{lemma}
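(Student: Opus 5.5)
Since the claimed bound is a sum of termwise estimates, I would prove it one subinterval at a time, treating each term $(\tau_i^m-t_i^m)(t_{i+1}-t_i)$ separately.

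By the factorization used in Lemma~\ref{lem:real1},
\[
\tau_i^m-t_i^m=(\tau_i-t_i)\big(\tau_i^{m-1}+\tau_i^{m-2}t_i+\cdots+t_i^{m-1}\big).
\]
Each of the $m$ monomials in the bracket has degree $m-1$ in numbers lying in $[a,b]$. Since $|x|\le|a|+|x-a|\le|a|+|b-a|$ for every $x\in[a,b]$, each monomial is bounded by $(|a|+|b-a|)^{m-1}$, so the whole bracket is at most $m(|a|+|b-a|)^{m-1}=M$ in absolute value. This is exactly the constant of Lemma~\ref{lem:real1}.

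Because $\tau_i\in[t_i,t_{i+1}]$, we have $0\le\tau_i-t_i\le t_{i+1}-t_i$. Hence each term satisfies
\[
|(\tau_i^m-t_i^m)(t_{i+1}-t_i)|\le M(t_{i+1}-t_i)^2.
\]
Summing over $i$ and applying the triangle inequality gives a bound of $M\sum_i(t_{i+1}-t_i)^2$. As in Lemma~\ref{lem:real1}, this is at most $M\,\delta(\pi)\sum_i(t_{i+1}-t_i)=M(b-a)\,\delta(\pi)$, since the increments sum to $b-a$.

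I expect no step to be a real obstacle. The only points to check are the trivial cases. For $m=0$ both sums equal $b-a$, the difference vanishes, and $M=0$, so the bound holds. For $m=1$ the bracket is $1=M$, so the argument above applies unchanged. The other point is that the uniform bound on the bracket must hold for every tag in $[t_i,t_{i+1}]$, not only for the nodes $t_i,t_{i+1}$; it does, because the bound uses only that the numbers lie in $[a,b]$.
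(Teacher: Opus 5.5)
Your proof is correct and is essentially the paper's argument. You factor $\tau_i^m-t_i^m$, bound the bracket of $m$ monomials by $M=m(|a|+|b-a|)^{m-1}$ using only that all points lie in $[a,b]$, use $|\tau_i-t_i|\le t_{i+1}-t_i$, and conclude with $\sum_i(t_{i+1}-t_i)^2\le(b-a)\,\delta(\pi)$ as in Lemma~\ref{lem:real1}. Your explicit check of the degenerate cases $m=0,1$ is a harmless addition that the paper leaves implicit.
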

\begin{proof}
Since $\tau_i^m-t_i^m=(\tau_i-t_i)(\tau_i^{m-1}+\tau_i^{m-2}t_i+\cdots+t_i^{m-1})$
and $|\tau_i-t_i|\le t_{i+1}-t_i$, we get
\[
\Big|\sum_{i=0}^{N-1}(\tau_i^m-t_i^m)(t_{i+1}-t_i)\Big|
\le\sum_{i=0}^{N-1}\big|\tau_i^{m-1}+\cdots+t_i^{m-1}\big|(t_{i+1}-t_i)^2.
\]
As $\tau_i,t_i\in[a,b]$, the bracketed sum of $m$ terms is bounded by
$M=m(|a|+|b-a|)^{m-1}$, and the claim follows exactly as in
Lemma~\ref{lem:real1}.
\end{proof}

Lemmas~\ref{lem:real1} and~\ref{lem:real2} together show that we have
computed, directly, the Riemann integral of $t^m$:
\begin{equation}\label{real:eq0}
\int_a^b t^m\,dt=\frac{b^{m+1}-a^{m+1}}{m+1}.
\end{equation}
By linearity we obtain, for any polynomial $P$,
\begin{thm}\label{thm:real}
Let $P(t)$ be a polynomial and $P'(t)$ its derivative. Then
\[
\int_a^b P'(t)\,dt=P(b)-P(a).
\]
\end{thm}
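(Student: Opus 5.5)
The plan is to reduce everything to the monomial case already settled by Lemmas~\ref{lem:real1} and~\ref{lem:real2}, and then pass to general polynomials by linearity of the net of Riemann sums.

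Write $P(t)=\sum_{j=0}^{n}c_j t^j$, so that $P'(t)=\sum_{j=1}^{n} j c_j t^{j-1}$. For a fixed tagged partition $\pi$ the Riemann sum is linear in the integrand:
\[
\sum_{i=0}^{N-1}P'(\tau_i)(t_{i+1}-t_i)=\sum_{j=1}^{n} j c_j\sum_{i=0}^{N-1}\tau_i^{\,j-1}(t_{i+1}-t_i).
\]
By Lemma~\ref{lem:real2} with $m=j-1$, each inner sum differs from $I_0(\pi)$ (the left-endpoint sum for $t^{j-1}$) by at most $M_j(b-a)\delta(\pi)$, where $M_j=(j-1)(|a|+|b-a|)^{j-2}$. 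For $j=1$ the two sums coincide trivially, both equal to $b-a$. By Lemma~\ref{lem:real1}, $I_0(\pi)\to (b^{j}-a^{j})/j$. Hence each inner sum converges to $(b^j-a^j)/j$ as $\delta(\pi)\to0$, uniformly in the choice of tags. The total error is then bounded by
\[
\Big(\sum_j j|c_j|\,(M_j+jM_j)\Big)(b-a)\,\delta(\pi),
\]
which yields an explicit rate.

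Summing over $j$, the finite linear combination of convergent nets converges to
\[
\sum_{j=1}^n j c_j\,\frac{b^j-a^j}{j}=\sum_{j=1}^n c_j(b^j-a^j)=P(b)-P(a).
\]
The constant term $c_0$ cancels, which is consistent with $P'$ not seeing it. This proves that $P'\in{\cal R}$ and that its integral equals $P(b)-P(a)$.

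There is no real obstacle here. The only points to check are that a finite linear combination of convergent nets on the same directed set converges to the corresponding combination of limits, and that the degenerate case $m=0$ is handled separately, since the constant $M$ in the lemmas vanishes there and the statements become trivial.
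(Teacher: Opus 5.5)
Your proposal is correct and follows the paper's route: the paper likewise combines Lemmas~\ref{lem:real1} and~\ref{lem:real2} to obtain $\int_a^b t^m\,dt=(b^{m+1}-a^{m+1})/(m+1)$ for arbitrary tags, and then concludes for $P'$ by linearity. Your explicit error rate (a loose but valid bound) and your separate check of the constant-integrand case $j=1$ are details the paper leaves implicit.
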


\begin{remark}
Remark~\ref{rem:dense} applies here as well. In the next section, however,
we do need the completeness of $\R$, in order to approximate continuous
functions uniformly by polynomials.
\end{remark}

\subsection{Two immediate variants}

The same argument, applied to $\alpha_i=t_i$ with different exponents on
the two indices, gives without extra work
\[
\sum_{i=0}^{N-1}t_i^m\,(t_{i+1}^n-t_i^n)\longrightarrow
\frac{n}{m+n}\big(b^{m+n}-a^{m+n}\big),
\]
\[
\sum_{i=0}^{N-1}\frac{t_i^m}{t_{i+1}}(t_{i+1}-t_i)\longrightarrow
\frac{b^m-a^m}{m}.
\]

\begin{remark}
Choosing $a=0$, $b=1$ and the uniform partition $t_i=i/n$ in
\eqref{real:eq0}, one recovers the classical asymptotics
\[
\frac{1^m+2^m+\cdots+n^m}{n^{m+1}}\longrightarrow\frac1{m+1}.
\]
More generally, the Riemann sums for tags $\tau_i\in[t_i,t_{i+1}]$ over a
generic partition of $[a,b]$ converge if and only if the corresponding
sums over $[0,1]$ do, via the affine change of variable $t=a+s(b-a)$; so
it is enough to carry out the computation on $[0,1]$ and transport it back
to $[a,b]$.
\end{remark}

\subsection{Continuous functions belong to the class ${\cal R}$}\label{sec:cont-riemann}

It is straightforward to extend Theorem~\ref{thm:real} to piecewise
polynomials, by linearity and additivity, and to establish
\[
\Big|\int_a^b P(t)\,dt\Big|\le\int_a^b|P(t)|\,dt
\]
directly on the Riemann sums, then passing to the limit.

We now extend everything to continuous functions, via the Weierstrass
approximation theorem. Alternatively, one could prove the fundamental
theorem of calculus using the classical fact that the uniform limit of a
sequence of functions is differentiable if the sequence of derivatives
converges uniformly \cite[Theorem 7.17]{R}. (Of course, the classical
existence proof of the Riemann integral for continuous functions is much
simpler; the point here is only to keep working with the same elementary
method throughout.)

Given $\phi\colon[a,b]\to\R$ continuous, let $P_n^\phi$ be its Bernstein
polynomial of degree $n$,
\[
P_n^\phi(t)=\sum_{k=0}^n\binom nk\phi\Big(a+(b-a)\tfrac kn\Big)
\frac{(t-a)^k(b-t)^{n-k}}{(b-a)^n},
\]
so that $\|\phi-P_n^\phi\|_\infty\to0$ as $n\to\infty$, and
$\|P_n^\phi\|_\infty\le\|\phi\|_\infty$. Let
\[
v_n:=\int_a^b P_n^\phi(t)\,dt.
\]
From
\[
|v_{n+p}-v_n|\le\int_a^b|P_{n+p}^\phi(t)-P_n^\phi(t)|\,dt
\le(b-a)\,\|P_{n+p}^\phi-P_n^\phi\|_\infty
\]
the sequence $\{v_n\}$ is Cauchy, hence converges to some $v$.

\begin{remark}
The same argument with variable upper limit $t\le b$,
\[
v_n(t):=\int_a^t P_n^\phi(s)\,ds,
\]
gives, from
\[
|v_{n+p}(t)-v_n(t)|\le(b-a)\,\|P_{n+p}^\phi-P_n^\phi\|_\infty,
\]
that $\{v_n(t)\}$ converges to some $v(t)$, uniformly in $t$.
\end{remark}

Now, for a partition $\pi$ with tags at the left endpoints $t_i$,
\begin{align*}
\Big|\sum_{i=0}^{N-1}\phi(t_i)(t_{i+1}-t_i)-v\Big|
&\le\|\phi-P_n^\phi\|_\infty(b-a)\\
&\quad+\Big|\sum_{i=0}^{N-1}P_n^\phi(t_i)(t_{i+1}-t_i)-v_n\Big|+|v_n-v|,
\end{align*}
whence, letting $\delta(\pi)\to0$ first (which kills the middle term, by
Lemma~\ref{lem:real1} applied to the polynomial $P_n^\phi$) and then
$n\to\infty$,
\[
\limsup_{\delta(\pi)\to0}\Big|\sum_{i=0}^{N-1}\phi(t_i)(t_{i+1}-t_i)-v\Big|
\le\|\phi-P_n^\phi\|_\infty(b-a)+|v_n-v|\xrightarrow[n\to\infty]{}0.
\]
Hence the Riemann sums of $\phi$ converge, and we may define
$v=\int_a^b\phi(t)\,dt$; moreover
\[
\int_a^b P_n^\phi(t)\,dt\longrightarrow\int_a^b\phi(t)\,dt,\qquad
\int_a^t P_n^\phi(s)\,ds\longrightarrow\int_a^t\phi(s)\,ds\ \ \text{uniformly}.
\]
Since
\[
\frac{d}{dt}\int_a^tP_n^\phi(s)\,ds=P_n^\phi(t)\longrightarrow\phi(t)\quad\text{uniformly,}
\]
the limit $\int_a^t\phi(s)\,ds$ is differentiable, with derivative
$\phi(t)$: this is the fundamental theorem of calculus for continuous
functions.

From $|P_n^\phi(t)|\le P_n^{|\phi|}(t)$ we get
\[
\Big|\int_a^b P_n^\phi(t)\,dt\Big|\le\int_a^bP_n^{|\phi|}(t)\,dt
\longrightarrow\int_a^b|\phi(t)|\,dt,
\]
hence
\[
\Big|\int_a^b\phi(t)\,dt\Big|\le\int_a^b|\phi(t)|\,dt.
\]

\begin{cor}
If $\phi_n\to\phi$ uniformly on $[a,b]$ then
$\int_a^b\phi_n(t)\,dt\to\int_a^b\phi(t)\,dt$; indeed
\[
\Big|\int_a^b\phi_n(t)\,dt-\int_a^b\phi(t)\,dt\Big|
\le\int_a^b|\phi_n(t)-\phi(t)|\,dt\le\|\phi_n-\phi\|_\infty(b-a).
\]
\end{cor}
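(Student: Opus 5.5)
The corollary follows from linearity of the integral on continuous functions together with the absolute-value inequality $\bigl|\int_a^b\psi\bigr|\le\int_a^b|\psi|$ established just above. The plan is to apply that inequality to $\psi=\phi_n-\phi$ and then bound $\int_a^b|\psi|$ by monotonicity. Since each $\phi_n$ is the uniform limit of continuous functions in the intended setting, I take $\phi_n$ and $\phi$ to be continuous on $[a,b]$; this hypothesis is implicit in the statement. Then $\phi_n-\phi$ and $|\phi_n-\phi|$ are continuous, so their integrals exist by the preceding subsection.

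First, I check linearity: $\int_a^b(\phi_n-\phi)=\int_a^b\phi_n-\int_a^b\phi$. This holds because the Riemann sums of $\phi_n-\phi$ at a tagged partition are the differences of the Riemann sums of $\phi_n$ and of $\phi$. Taking the limit $\delta(\pi)\to0$ of the net gives the identity, since all three limits exist.

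Second, I apply the absolute-value inequality to $\psi=\phi_n-\phi$. This yields the first inequality in the statement.

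Third, I prove monotonicity: if $0\le g\le h$ are continuous, then $\int_a^b g\le\int_a^b h$. This follows termwise on Riemann sums, since the increments $t_{i+1}-t_i$ are positive, and the inequality passes to the limit. I apply it with $g=|\phi_n-\phi|$ and $h$ the constant $\|\phi_n-\phi\|_\infty$. The integral of a constant $c$ is $c(b-a)$, since every Riemann sum equals exactly $c(b-a)$. This gives the second inequality.

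Finally, uniform convergence means $\|\phi_n-\phi\|_\infty\to0$, so the right-hand side tends to zero and the integrals converge.

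The only step needing care is the implicit continuity hypothesis, which guarantees that every integral written down exists. The existence of the integral was proved in this section only for continuous functions (and polynomials). If one insisted on general Riemann-integrable $\phi_n$, the same estimate on Riemann sums would still show that the limit is integrable, but I would not pursue that here.
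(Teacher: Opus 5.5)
Your proof is correct and is essentially the paper's own argument: the paper justifies the corollary only by the displayed chain, which rests on linearity, the inequality $\bigl|\int_a^b\psi\,dt\bigr|\le\int_a^b|\psi|\,dt$ established just before for continuous $\psi$, and the bound of every Riemann sum of $|\phi_n-\phi|$ by $\|\phi_n-\phi\|_\infty(b-a)$. You spell out these steps, together with the continuity of $\phi_n,\phi$ that the surrounding subsection tacitly assumes; the case of general Riemann-integrable functions is the one treated by the theorem in the appendix.
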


\subsection{The multidimensional target case}

The extension of the Riemann integral to continuous functions with values
in $\R^d$ ($d>1$) is immediate: writing $f=(f_1,\ldots,f_d)$, one
integrates componentwise,
\[
\int_a^bf(t)\,dt=\Big(\int_a^bf_1(t)\,dt,\ldots,\int_a^bf_d(t)\,dt\Big).
\]

\section{The complex case}

Let $a,b\in\C$; fix once and for all a disk of radius $R$ containing
$a,b$, and let ${\cal P}(a,b)$ be the directed set of finite sequences of
points $\pi=\{z_0=a,z_1,\ldots,z_N=b\}$ \emph{all contained in this same
fixed disk} -- $R$ is chosen once, in advance, for the whole family, not
separately for each $\pi$ -- directed by $\delta(\pi)=\max_i|z_{i+1}-z_i|$;
set also $Q(\pi)=\sum_{i=0}^{N-1}|z_{i+1}-z_i|^2$.

\begin{remark}[Why $R$ must be fixed for the whole family]\label{rem:fixed-R}
Fixing $R$ in advance, rather than allowing it to depend on $\pi$, is not
a bookkeeping nicety: it is exactly what later makes the quantitative
estimates of this section uniform along the net. In
Section~\ref{sec:tags-convention} below, integrating a holomorphic $f$
along a strange net requires a \emph{fixed} compact $K$ (contained in
this disk) with $f$ holomorphic on a neighbourhood of $K$, and the
Taylor-remainder bound there uses the constant
$M_2=\sup_{K'}|f''|<\infty$ for a \emph{fixed} enlargement $K'$ of $K$
(Lemma~\ref{lem:taylor2}) -- a constant that has no meaning, and no
reason to stay finite, if points of $\pi$ were allowed to range outside
$K'$ as $\pi$ varies. Fixing $R$ (hence $K\subset\{|z|\le R\}$) once and
for all is what guarantees every $\pi\in{\cal P}(a,b)$ stays inside $K'$,
so that the same $M_2$ bounds the remainder for every partition in the
net, uniformly. A consequence worth flagging explicitly: since every
$\pi\in{\cal P}(a,b)$ lies inside the fixed disk of radius $R$, every
point of every such $\pi$ satisfies $|z_i-a|\le2R$ -- the excursion of
any admissible strange net is automatically bounded, by construction,
regardless of how small $Q(\pi)$ is.
\end{remark}

\begin{remark}\label{rem:strange-hyp}
Throughout this section we require $Q(\pi)\to0$ as $\delta(\pi)\to0$: this
does \emph{not} follow automatically from $\delta(\pi)\to0$ alone, and it is
a genuine standing hypothesis. Under it, the points $z_i$ need not converge
to any path of finite length, or even to a path at all -- it is a
``strange'' net.
\end{remark}

\begin{remark}[$\delta(\pi)\to0$ is, conversely, a free corollary of
$Q(\pi)\to0$]\label{rem:delta-from-Q}
Since $\delta(\pi)^2=\max_i|z_{i+1}-z_i|^2$ is, trivially, one term of
the sum $Q(\pi)=\sum_i|z_{i+1}-z_i|^2$ (all of whose summands are
non-negative), we always have
\[
\delta(\pi)^2\le Q(\pi),\qquad\text{for every }\pi\in{\cal P}(a,b).
\]
Consequently $Q(\pi)\to0$ by itself already forces $\delta(\pi)\to0$: the
standing hypothesis of Remark~\ref{rem:strange-hyp} could equivalently be
stated as ``$Q(\pi)\to0$'' alone, with $\delta(\pi)\to0$ following as a
free corollary rather than as an independent requirement -- it is
$Q(\pi)$, not $\delta(\pi)$, that genuinely directs the net.

This does \emph{not}, however, make the mesh redundant in the ordering
of Remark~\ref{rem:strange-order} below: \emph{pairwise},
$Q(\pi')\le Q(\pi)$ does not imply $\delta(\pi')\le\delta(\pi)$ (a single
large gap in $\pi'$ can coexist with a small $Q(\pi')$ if compensated by
many tiny ones elsewhere, while $\pi$ has no such large gap at all), so
the two conditions defining $\pi\preceq\pi'$ are not redundant as a
comparison between individual partitions. It is only in the limit --
once $Q(\pi)\to0$ along the net -- that $\delta(\pi)\to0$ comes for free,
via the pointwise inequality above applied to the tail of the net. We
keep both conditions visible throughout: $\delta(\pi)\to0$ because it is
the classical mesh condition the reader expects of a Riemann sum, and
$Q(\pi)\to0$ because it is the genuinely new, non-automatic ingredient
specific to this construction.
\end{remark}

\subsection*{Three notions of integral}

It is worth placing the construction of this section against two more
familiar ones. Given a continuous path $\gamma\colon[0,1]\to\C$ from $a$ to
$b$: (i) the \emph{classical line integral} $\int_\gamma f(z)\,dz$ is the
limit of Riemann sums over partitions refining $\gamma$ (i.e.\ $z_i=\gamma(t_i)$,
$\max_i|t_{i+1}-t_i|\to0$), and requires $\gamma$ rectifiable; (ii) the
\emph{Young integral} relaxes this to $\gamma$ merely H\"older continuous
with exponent $\alpha>1/2$, provided the integrand is of bounded variation;
(iii) the integral of this section drops the path altogether, letting $\pi$
range over \emph{all} finite point sets in ${\cal P}(a,b)$, and asks only
that $\delta(\pi)\to0$ and $Q(\pi)\to0$.

\begin{lemma}[Directedness]\label{lem:directed}
${\cal P}(a,b)$, ordered by $\pi\preceq\pi'$ whenever
$\delta(\pi')\le\delta(\pi)$ and $Q(\pi')\le Q(\pi)$, is a directed
preorder: for any $\pi_1,\pi_2\in{\cal P}(a,b)$ there is
$\pi_3\in{\cal P}(a,b)$ with $\pi_3\succeq\pi_1$ and $\pi_3\succeq\pi_2$.
\end{lemma}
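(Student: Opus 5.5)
The directedness will come from a comparison rather than a construction. The preorder in Lemma~\ref{lem:directed} is defined only through the two real numbers $\delta(\pi)$ and $Q(\pi)$, so it suffices to exhibit, for given $\pi_1,\pi_2$, some $\pi_3\in{\cal P}(a,b)$ with
\[
\delta(\pi_3)\le\min\{\delta(\pi_1),\delta(\pi_2)\}
\quad\text{and}\quad
Q(\pi_3)\le\min\{Q(\pi_1),Q(\pi_2)\}.
\]
Reflexivity and transitivity of $\preceq$ are immediate, because it is the conjunction of two inequalities between reals. So the only content is the existence of such a $\pi_3$.

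The simplest choice is to take $\pi_3$ to be whichever of $\pi_1,\pi_2$ is ``better''. This fails when one has smaller $\delta$ and the other smaller $Q$, so I will build $\pi_3$ directly instead. Set $\eta=\min\{\delta(\pi_1),\delta(\pi_2),Q(\pi_1),Q(\pi_2)\}>0$. This is positive because $a\ne b$, or more generally because every partition has at least one step, and I should check the degenerate case $a=b$ separately.

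Let $\pi_3$ be the straight segment from $a$ to $b$, subdivided uniformly into $N$ steps: $z_i=a+\tfrac iN(b-a)$. All its points lie in the fixed disk, since that disk is convex and contains $a,b$. Its mesh and quadratic sum are
\[
\delta(\pi_3)=\frac{|b-a|}{N},
\qquad
Q(\pi_3)=N\frac{|b-a|^2}{N^2}=\frac{|b-a|^2}{N}.
\]
Choosing $N$ large makes both quantities at most $\eta$, which gives $\pi_3\succeq\pi_1$ and $\pi_3\succeq\pi_2$.

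Two points need care. The first is that $\eta>0$. If $a\ne b$, then $\delta(\pi)\ge|b-a|/N_\pi>0$ for every $\pi$, and $Q(\pi)\ge\delta(\pi)^2$ by Remark~\ref{rem:delta-from-Q}, so $\eta>0$. If $a=b$, consecutive points may coincide (the definition does not force them to be distinct), so $\delta$ and $Q$ could vanish. In that case I take $\pi_3=\{a,a\}$ with $N=1$, which has $\delta=Q=0$ and is trivially dominant. The second is convexity of the disk, which keeps $\pi_3$ inside ${\cal P}(a,b)$; this is exactly why Remark~\ref{rem:fixed-R} fixes the disk in advance. Neither is a real obstacle. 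The only subtle step is noticing that a uniform straight subdivision drives $\delta$ and $Q$ to zero simultaneously, since $Q=|b-a|\,\delta$ for it.
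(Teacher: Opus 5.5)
Your proof is correct and follows the paper's argument: you take the uniformly subdivided segment from $a$ to $b$, with $\delta(\pi_3)=|b-a|/N$ and $Q(\pi_3)=|b-a|^2/N$, which lies in the fixed disk because the disk is convex. Your explicit check that the minima are strictly positive when $a\ne b$ (via $\delta(\pi)\ge|b-a|/N_\pi$ and $Q(\pi)\ge\delta(\pi)^2$), together with your separate treatment of $a=b$, fills in a point the paper leaves implicit in the phrase ``possible since both quantities $\to0$''.
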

\begin{proof}
Reflexivity and transitivity of $\preceq$ are immediate from those of
$\le$ on each coordinate. For directedness, given $\pi_1,\pi_2$, let
$\pi_3$ consist of $n$ equally spaced points on the straight segment
$[a,b]\subset\C$, for $n$ to be chosen: $z_j=a+j(b-a)/n$,
$j=0,\ldots,n$. Then
\[
\delta(\pi_3)=\frac{|b-a|}n,\qquad
Q(\pi_3)=n\Big(\frac{|b-a|}n\Big)^2=\frac{|b-a|^2}n,
\]
both $\to0$ as $n\to\infty$; choosing $n$ large enough that
$\delta(\pi_3)\le\min(\delta(\pi_1),\delta(\pi_2))$ and
$Q(\pi_3)\le\min(Q(\pi_1),Q(\pi_2))$ simultaneously (possible since both
quantities $\to0$) gives $\pi_3\succeq\pi_1$ and $\pi_3\succeq\pi_2$.
Since $\pi_3\subset[a,b]$ lies in the same fixed disk of radius $R$ as
$\pi_1,\pi_2$ (Remark~\ref{rem:fixed-R}; the segment $[a,b]$ is contained
in any disk containing $a,b$), $\pi_3\in{\cal P}(a,b)$.
\end{proof}

\begin{remark}\label{rem:strange-order}
By Lemma~\ref{lem:directed}, ${\cal P}(a,b)$, ordered by
$\pi\preceq\pi'$ whenever $\delta(\pi')\le\delta(\pi)$
and $Q(\pi')\le Q(\pi)$, is a directed set, and the Riemann sums
$S(\pi)$ a net on it in the sense of Moore--Smith. We emphasize once,
here, a point implicit throughout: despite the word ``partition'',
$\preceq$ is \emph{not} a refinement order in the usual set-theoretic
sense ($\pi'\succeq\pi$ does not mean $\pi'\supseteq\pi$ as point sets);
it compares only the two numbers $\delta,Q$, and $\pi_3$ in the proof of
Lemma~\ref{lem:directed} need share no points at all with $\pi_1$ or
$\pi_2$ to dominate both of them. Since $\C$ is Hausdorff,
the limit of a convergent net is automatically unique; this guarantees, for
free, that the numbers $I_k(\pi)$ of Lemma~\ref{lem:cplx1} cannot converge
to two different values. It does \emph{not}, however, make the telescoping
argument of that Lemma superfluous: existence of the limit, and its
explicit value $L=(b^{m+1}-a^{m+1})/(m+1)$, still have to be established
directly, exactly as done there -- the net-theoretic observation only adds
uniqueness for free once convergence is known.
\end{remark}

When a partition $\pi\in{\cal P}(a,b)$ happens to refine a path $\gamma$,
the limit above agrees with the classical line integral (if $\gamma$ is
rectifiable) or with the Young integral (if $\gamma$ is H\"older with
$\alpha>1/2$, see the remark below). But ${\cal P}(a,b)$ also contains
partitions with $\delta(\pi)\to0$ and $Q(\pi)\to0$ that are not refinements
of \emph{any} continuous path: for these the limit still exists, but there
is no path for it to be associated with.

\begin{example}[An oscillating net of infinite length]\label{ex:oscillating}
On $[0,1]\subset\R$ fix $n$ and set $z_k=k/n$, $k=0,\ldots,n$. Between $z_k$
and $z_{k+1}$ insert a zig-zag of $M_n=2n^4$ steps
\[
\delta_j=\frac1{nM_n}+i\,(-1)^ja_n,\qquad j=1,\ldots,M_n,\qquad a_n=\frac1{n^3},
\]
so that the real parts sum exactly to $1/n$ and the (alternating) imaginary
parts cancel, the path returning to the real axis at $z_{k+1}$. Since
$1/(nM_n)=1/(2n^5)\ll a_n$, each step has length $|\delta_j|\sim a_n=1/n^3$,
so that, over the whole partition $\pi_n$,
\[
\delta(\pi_n)\sim\frac1{n^3}\longrightarrow0,\qquad
Q(\pi_n)\sim n\cdot M_n\,a_n^2\sim n\cdot n^4\cdot n^{-6}=\frac1n\longrightarrow0,
\]
while the total length is
\[
V(\pi_n)\sim n\cdot M_n\,a_n\sim n\cdot n^4\cdot n^{-3}=n^2\longrightarrow\infty.
\]
The polygonal paths through $\pi_n$ converge uniformly to the segment
$[0,1]$, yet their length explodes: a deterministic analogue of the fact
that Brownian motion has finite quadratic variation but infinite total
variation.
\end{example}

\begin{remark}\label{rem:holder}
Conversely, if $\gamma$ is H\"older continuous with exponent $\alpha>1/2$,
then
\[
Q(\pi)=\sum_i|\gamma(t_{i+1})-\gamma(t_i)|^2\le C\sum_i|t_{i+1}-t_i|^{2\alpha}
\longrightarrow0,
\]
since $2\alpha>1$: the standing hypothesis $Q(\pi)\to0$ is automatically
satisfied along any such path -- a class that includes paths of infinite
total variation -- so the integral of this section contains the Young
integral as a special case, while not requiring any underlying path at
all.
\end{remark}

\begin{remark}[What is new here, precisely]\label{rem:novelty}
It is worth stating explicitly, rather than leaving implicit, what kind
of contribution the construction $\mathcal P(a,b)$, $Q(\pi)\to0$ is
intended to be, since the individual results it recovers (the value of
$\int_\gamma f$, It\^o's formula, the Young--Kondurar theorem) are all
classical.

\emph{What is not new.} For a holomorphic $f$ with primitive $F$, the
limit obtained is always $F(b)-F(a)$ (up to the winding-number
correction of Theorem~\ref{thm:logwinding} when $F$ is multi-valued): the construction
never produces a numerical value not already computable, once a
primitive is known, by classical means. It is not a new integral in the
sense of assigning values to integrands for which no value was
previously defined.

\emph{What is new: an enlarged, path-free convergence class.} Classical
Riemann, Riemann--Stieltjes, and Young integration are all defined as
limits over partitions \emph{of a fixed underlying path} $\gamma$: the
points $z_i=\gamma(t_i)$ lie, by construction, on $\gamma$, and mesh
refinement means refining the parametrization of that one path. The
family $\mathcal P(a,b)$ of Section~2 drops this altogether: its
elements are arbitrary finite point sets in $\C$, required to satisfy
only the two numerical conditions $\delta(\pi)\to0$, $Q(\pi)\to0$, with
no path -- rectifiable, H\"older, or otherwise -- required to exist
behind them. Example~\ref{ex:oscillating} (and the more elaborate variants in a
companion note, available on request) exhibit nets in $\mathcal P(a,b)$
whose points do \emph{not} converge to any continuous curve at all,
along which the classical primitive formula nonetheless holds. This is
squarely type~(2)/(3) in the sense of the report -- a new convergence
framework for a classical integral, together with an elementary
unifying viewpoint on why several classical theorems (Riemann,
Riemann--Stieltjes, Young--Kondurar, and their complex analogues) share
one proof -- and \emph{not} type~(1): we make no claim to a new class of
integrable functions or new values.

\emph{Relation to Young integration and the Sewing Lemma.}
Remark~2.5 above already shows that $\alpha$-H\"older paths with
$\alpha>1/2$ satisfy $Q(\pi)\to0$ automatically, so every instance of
the classical Young integral is an instance of this construction. The
Sewing Lemma of Feyel--de la Pradelle~\cite{FdlP} (see also the
discussion around Theorem~4.5 in Section~4.2) operates at a different,
complementary level of generality: it constructs an integral for a
possibly irregular integrand against a possibly irregular path, given
only a local quasi-additivity estimate on a germ -- the path, however
rough, is still the primary object. Here the integrand is always
classical (a polynomial, or a holomorphic $f$ with known local
behaviour), and what is relaxed instead is the requirement that the
approximating points lie along \emph{any} path whatsoever. In this
sense the two generalizations are orthogonal: the Sewing Lemma weakens
the regularity of the path while keeping the notion of ``a path'';
$\mathcal P(a,b)$ keeps the regularity of the integrand high while
discarding the path altogether. Modern rough-path theory
\cite{Lyons98,FV} addresses a further, harder question that the present
construction does not touch at all: how to \emph{define} the integral
(and iterated integrals) of one rough signal against another, where no
classical pairing exists and an enriched path (the rough-path lift)
must be supplied as extra data. Nothing in this paper bears on that
problem; the present construction only ever integrates a fixed,
classically integrable $f$, and the question it asks is how weak a
notion of ``approximating point set'' still recovers the classical
answer.

\emph{Relation to F\"ollmer's pathwise It\^o calculus.} A third,
independent point of comparison, orthogonal to both of the above, is
F\"ollmer's pathwise proof of It\^o's formula~\cite{F} and its
far-reaching generalization to paths of arbitrary $p$-th variation by
Cont and Perkowski~\cite{ContPerkowski19}. That theory fixes a single
path $S$ and a sequence of partitions refining it, and constructs, for
$S$ with finite $p$-th variation $[S]^p\not\equiv0$ along that sequence,
a compensated Riemann sum converging to
$\int_0^tf'(S)\,dS+\frac1{p!}\int_0^tf^{(p)}(S)\,d[S]^p$ -- the general
Taylor-remainder mechanism used throughout the present paper, but with
$[S]^p$ left as an \emph{arbitrary} nondecreasing function, rather than
required to vanish. Our Section~2 is exactly the degenerate case
$[S]^2\equiv0$ of that formula (whence no correction term survives),
and our Example~2.6 is a deterministic instance of the same phenomenon
\cite[Remark~1.2]{ContPerkowski19} note for Brownian motion: finite
(here, zero) $p$-th variation along a net or sequence does not imply
finite $p$-variation in the classical (Wiener) sense, so a path can
satisfy $Q(\pi_n)\to0$ while having infinite total variation. Their
extension to vector-valued paths, in F\"ollmer's original formulation,
requires $S^i,S^i+S^j\in V^p(\pi)$ for each pair of components -- the
same polarization device used below (Lemma~\ref{lem:bilinear}) to
construct $\langle A,B\rangle$ from $\langle A\rangle,\langle
B\rangle,\langle A+B\rangle$ for conformal martingales, applied here to
the pair $(A,B)$ making up $M=A+iB$. Where F\"ollmer and Cont--Perkowski
generalize by weakening the regularity of a fixed path (allowing
$[S]^p\ne0$, and $p$ any even integer, to reach paths as rough as
fractional Brownian motion of arbitrary Hurst index), the present paper
fixes $p=2$ throughout and generalizes in the orthogonal direction, by
discarding the path itself.

\emph{Genuine uses of the path-free formulation.} We do not currently
have an application in which dropping the underlying path is
\emph{essential} -- as opposed to illustrative -- rather than merely an
equivalent reformulation of a path-based statement. We record this
honestly rather than overstate it: the most plausible candidates, not
pursued here, are situations where the approximating points arise from
a source with no natural path structure to begin with (a scattered or
random point set, e.g.\ from a numerical scheme or a point process),
where knowing that $Q(\pi)\to0$ alone -- without having to exhibit or
verify convergence to any particular curve -- suffices to guarantee the
classical formula. Whether this is more than a reformulation is, in our
view, an open question, and one the present paper does not resolve.
\end{remark}

\subsection{The key result}

Given $\pi\in{\cal P}(a,b)$ and a fixed integer $m$, define, as before,
\[
I_k(\pi)=\ds\sum_{i=0}^{N-1}z_i^{m-k}z_{i+1}^k(z_{i+1}-z_i),\qquad k=0,\ldots,m.
\]

\begin{lemma}\label{lem:cplx1}
If $Q(\pi)\to0$ as $\delta(\pi)\to0$, all the $I_k(\pi)$ tend to the same
limit $L=\dfrac{b^{m+1}-a^{m+1}}{m+1}$.
\end{lemma}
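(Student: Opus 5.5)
The plan is to follow the proof of Lemma~\ref{lem:real1} almost verbatim, with $|z_{i+1}-z_i|^2$ replacing $(t_{i+1}-t_i)^2$, so that the role of $(b-a)\,\delta(\pi)$ is played by $Q(\pi)$. The first step is purely algebraic. The identity
\[
\lambda^{m+1}-\mu^{m+1}=(\lambda-\mu)(\lambda^m+\lambda^{m-1}\mu+\cdots+\mu^m)
\]
holds for $\lambda,\mu\in\C$. Applying it with $\lambda=z_{i+1}$, $\mu=z_i$ and summing over $i$ telescopes to
\[
I_0(\pi)+I_1(\pi)+\cdots+I_m(\pi)=b^{m+1}-a^{m+1}
\]
for every $\pi\in{\cal P}(a,b)$. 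This uses only $z_0=a$ and $z_N=b$; no ordering of the points is needed, which is exactly why the argument survives in the path-free setting.

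Next, for $1\le k\le m$, I write $I_k(\pi)=I_0(\pi)+\rho_k(\pi)$. Factoring $z_{i+1}^k-z_i^k$ by the same identity gives
\[
\rho_k(\pi)=\sum_{i=0}^{N-1}z_i^{m-k}\big(z_{i+1}^{k-1}+z_{i+1}^{k-2}z_i+\cdots+z_i^{k-1}\big)(z_{i+1}-z_i)^2 .
\]
This is the one place where the fixed disk of Remark~\ref{rem:fixed-R} matters. Every $z_i$ satisfies $|z_i|\le R'$, where $R'$ is the distance from the origin to the farthest point of the fixed disk. Hence each coefficient is bounded in modulus by $M=m\,(R')^{m-1}$, uniformly in $\pi$. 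Taking moduli then gives
\[
|\rho_k(\pi)|\le M\sum_{i}|z_{i+1}-z_i|^2=M\,Q(\pi).
\]

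Finally, I substitute into the telescoped identity:
\[
\big|(m+1)I_0(\pi)-(b^{m+1}-a^{m+1})\big|\le mM\,Q(\pi).
\]
Since $Q(\pi)\to0$ along the net by hypothesis, $I_0(\pi)\to L$. Each $I_k(\pi)=I_0(\pi)+\rho_k(\pi)$ then has the same limit.

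I expect no real obstacle. The only point needing care is the second step, and only conceptually. In the real case $\sum(t_{i+1}-t_i)^2\le(b-a)\delta(\pi)$ came for free from monotonicity. Here nothing forces $\sum_i|z_{i+1}-z_i|$ to stay bounded, as Example~\ref{ex:oscillating} shows, so $\delta(\pi)\to0$ alone would not suffice. This is precisely why the standing hypothesis $Q(\pi)\to0$ of Remark~\ref{rem:strange-hyp}, together with the uniform bound $M$ coming from the fixed $R$, is what replaces the real-line estimate.
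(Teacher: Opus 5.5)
Your proposal is correct and follows the paper's own proof: the same telescoping identity, the same factorization of $\rho_k(\pi)$, and the same uniform bound $|\rho_k(\pi)|\le M\,Q(\pi)$ coming from the fixed disk. Your constant $M=m(R')^{m-1}$, with $R'$ the largest modulus of a point of the fixed disk, is actually slightly more careful than the paper's $M=mR^{m-1}$. The paper's constant is only valid if that disk of radius $R$ is centred at the origin, which its definition does not require.
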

\begin{proof}
Exactly as in Lemma~\ref{lem:real1}, writing $I_k(\pi)=I_0(\pi)+\rho_k(\pi)$
for $1\le k\le m$, the same telescoping identity gives
\[
(m+1)I_0(\pi)+\sum_{k=1}^m\rho_k(\pi)=b^{m+1}-a^{m+1},
\]
with
\[
\rho_k(\pi)=\sum_{i=0}^{N-1}z_i^{m-k}\big(z_{i+1}^{k-1}+z_{i+1}^{k-2}z_i+\cdots+z_i^{k-1}\big)(z_{i+1}-z_i)^2,
\]
and, with $M=mR^{m-1}$,
\[
|\rho_k(\pi)|\le M\sum_{i=0}^{N-1}|z_{i+1}-z_i|^2=M\,Q(\pi).
\]
Hence $\rho_k(\pi)\to0$ as soon as $Q(\pi)\to0$, and every
$I_k(\pi)$ converges to $L=\frac{b^{m+1}-a^{m+1}}{m+1}$.
\end{proof}

\subsection{Polynomials belong to the class ${\cal R}$}

For each $i$ choose an arbitrary tag $\zeta_i$ with
$|\zeta_i-z_i|\le|z_{i+1}-z_i|$ (any $\zeta_i\in[z_i,z_{i+1}]$ qualifies,
but the tag need not lie on that segment -- nothing below uses
collinearity, only closeness to $z_i$).

\begin{lemma}\label{lem:cplx2}
With the same $M=mR^{m-1}$,
\[
\Big|\sum_{i=0}^{N-1}\zeta_i^m(z_{i+1}-z_i)-\sum_{i=0}^{N-1}z_i^m(z_{i+1}-z_i)\Big|
\le M\,Q(\pi).
\]
\end{lemma}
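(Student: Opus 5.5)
The plan is to mimic Lemma~\ref{lem:real2} term by term, replacing the real mesh estimate by the quadratic sum $Q(\pi)$. For each $i$ I factor
\[
\zeta_i^m-z_i^m=(\zeta_i-z_i)\big(\zeta_i^{m-1}+\zeta_i^{m-2}z_i+\cdots+z_i^{m-1}\big),
\]
so that the difference of the two sums in the statement equals
\[
\sum_{i=0}^{N-1}(\zeta_i-z_i)\big(\zeta_i^{m-1}+\cdots+z_i^{m-1}\big)(z_{i+1}-z_i).
\]
Next I use the standing requirement on the tags, $|\zeta_i-z_i|\le|z_{i+1}-z_i|$. With it, the modulus of the $i$-th summand is at most $\big|\zeta_i^{m-1}+\cdots+z_i^{m-1}\big|\,|z_{i+1}-z_i|^2$. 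This is exactly the step where a factor $(z_{i+1}-z_i)^2$ appeared in the proof of Lemma~\ref{lem:cplx1}, and here it is produced without any telescoping.

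It then remains to bound the bracket, a sum of $m$ monomials of degree $m-1$, uniformly in $i$ and $\pi$. If both $z_i$ and $\zeta_i$ lie in the fixed disk $\{|z|\le R\}$ of Remark~\ref{rem:fixed-R}, each monomial has modulus at most $R^{m-1}$. Hence the bracket is bounded by $M=mR^{m-1}$, and summing over $i$ gives
\[
\Big|\sum_{i=0}^{N-1}\zeta_i^m(z_{i+1}-z_i)-\sum_{i=0}^{N-1}z_i^m(z_{i+1}-z_i)\Big|\le M\sum_{i=0}^{N-1}|z_{i+1}-z_i|^2=M\,Q(\pi).
\]
Combined with Lemma~\ref{lem:cplx1}, this shows that the tagged sums converge to $(b^{m+1}-a^{m+1})/(m+1)$ whenever $Q(\pi)\to0$.

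The main obstacle is precisely this uniform bound on the tags. The disk confines the nodes $z_i$, but the paper allows tags off the segment $[z_i,z_{i+1}]$, so $\zeta_i$ need not lie in the disk. If the tag lies on the segment $[z_i,z_{i+1}]$, convexity of the disk gives $|\zeta_i|\le R$ directly (taking the disk centred at the origin, as the constant $mR^{m-1}$ in Lemma~\ref{lem:cplx1} implicitly assumes). Otherwise I would use $|\zeta_i|\le|z_i|+|z_{i+1}-z_i|\le R+2R=3R$. This yields the same inequality with the harmless larger constant $m(3R)^{m-1}$, still independent of $\pi$, which is all that is needed for the limit. Equivalently, one can choose the fixed disk in advance large enough to contain all admissible tags. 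I would state this adjustment of $M$ explicitly rather than gloss over it.
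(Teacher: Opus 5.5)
Your argument is the paper's own. You factor $\zeta_i^m-z_i^m$, use $|\zeta_i-z_i|\le|z_{i+1}-z_i|$ to produce $|z_{i+1}-z_i|^2$, and bound the bracket uniformly in $i$ and $\pi$.

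You are also right that the paper's one-line proof glosses over the tags, which may leave the disk. The inequality with $M=mR^{m-1}$ actually fails: for $m=2$, $R=1$, $a=1$, $b=-1$, $N=1$, $\zeta_0=3$, the left-hand side is $16>8=M\,Q(\pi)$. Your repair $m(3R)^{m-1}$ is correct, as is $m(R+\delta(\pi))^{m-1}$, and either is harmless for the limit.

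One small correction: your closing ``equivalently'' should mean a second disk of radius $3R$ used only to bound the tags. Enlarging the node disk itself would be circular.
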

\begin{proof}
Exactly as in Lemma~\ref{lem:real2}, using
$\zeta_i^m-z_i^m=(\zeta_i-z_i)(\zeta_i^{m-1}+\cdots+z_i^{m-1})$ and
$|\zeta_i-z_i|\le|z_{i+1}-z_i|$.
\end{proof}

Lemmas~\ref{lem:cplx1} and~\ref{lem:cplx2} show that we have computed the
Riemann integral of $z^m$ along the path $\pi$:
\[
\int_a^b z^m\,dz=\frac{b^{m+1}-a^{m+1}}{m+1},
\]
and, by linearity,
\begin{thm}\label{thm:cplx}
Let $P(z)$ be a polynomial and $P'(z)$ its derivative. Then
\[
\int_a^b P'(z)\,dz=P(b)-P(a).
\]
\end{thm}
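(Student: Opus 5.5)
The plan is to reduce Theorem~\ref{thm:cplx} to the monomial case already handled by Lemmas~\ref{lem:cplx1} and~\ref{lem:cplx2}, and then extend by linearity of the Riemann sums. Write $P(z)=\sum_{m=0}^d c_m z^{m+1}/(m+1)+c$, so that $P'(z)=\sum_{m=0}^d c_m z^m$. For a tagged $\pi\in{\cal P}(a,b)$ with tags $\zeta_i$, $|\zeta_i-z_i|\le|z_{i+1}-z_i|$, the Riemann sum of $P'$ splits as
\[
\sum_{i=0}^{N-1}P'(\zeta_i)(z_{i+1}-z_i)=\sum_{m=0}^d c_m\sum_{i=0}^{N-1}\zeta_i^m(z_{i+1}-z_i).
\]
This is a finite linear combination, so it suffices to show that each inner sum converges along the net to $(b^{m+1}-a^{m+1})/(m+1)$.

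For fixed $m$, I would compare the inner sum with the left-endpoint sum $I_0(\pi)$. Lemma~\ref{lem:cplx2} bounds the difference by $mR^{m-1}Q(\pi)$. By the argument of Lemma~\ref{lem:cplx1}, $|(m+1)I_0(\pi)-(b^{m+1}-a^{m+1})|\le m\cdot mR^{m-1}Q(\pi)$. Summing over $m$ then gives a single quantitative estimate
\[
\Big|\sum_i P'(\zeta_i)(z_{i+1}-z_i)-\big(P(b)-P(a)\big)\Big|\le C_P\,Q(\pi),
\qquad C_P=\sum_{m=0}^d |c_m|\,(m+1)\,mR^{m-1}.
\]
The constant $C_P$ depends only on $P$ and the fixed radius $R$ (Remark~\ref{rem:fixed-R}), not on $\pi$ or on the choice of tags. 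The case $m=0$ is trivial, since the sum telescopes exactly to $b-a$.

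The final step is to pass to the limit along the directed set of Lemma~\ref{lem:directed}. Given $\eps>0$, I would pick $\pi_0$ with $Q(\pi_0)<\eps/C_P$ (for instance the equally spaced segment partition used in that lemma). For every $\pi\succeq\pi_0$ we then have $Q(\pi)\le Q(\pi_0)$, so the bound is below $\eps$ uniformly in the tags. Hence the net of Riemann sums converges to $P(b)-P(a)$, and by Remark~\ref{rem:strange-order} this limit is unique.

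The only delicate point is the uniformity of the constant $M=mR^{m-1}$. The bound $|z_i|\le R$ requires the disk to be centred at the origin. If it is not, I would replace $R$ by $|{\rm centre}|+R$. Tags may also fall slightly outside the disk, but they satisfy $|\zeta_i|\le |z_i|+2R$, so an enlarged radius such as $3R+|{\rm centre}|$ still gives a $\pi$-independent constant. With that adjustment everything else is routine linearity.
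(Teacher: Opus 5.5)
Your proposal is correct and follows the paper's route: the monomial case from Lemmas~\ref{lem:cplx1} and~\ref{lem:cplx2}, then linearity; your explicit bound $C_P\,Q(\pi)$ and the passage to the limit along the order of Lemma~\ref{lem:directed} just spell out what the paper leaves implicit. Your adjustment of the constant is a legitimate tightening of a point that the paper's $M=mR^{m-1}$ glosses over: you replace $R$ by $|\mathrm{centre}|+3R$, which covers a disk not centred at $0$ and also tags lying outside the disk, since $|\zeta_i-z_i|\le|z_{i+1}-z_i|\le 2R$.
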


By Remark~\ref{rem:holder} above, this extends the classical Cauchy
theorem for polynomials to any H\"older path with exponent $\alpha>1/2$,
recovering in particular the setting of Young's integral.

\subsection{Cauchy's theorem along strange nets}\label{sec:tags-convention}

We now extend Theorem~\ref{thm:cplx} from polynomials to arbitrary
holomorphic functions, still along any strange net with $Q(\pi)\to0$. We
state it directly for a general compact set $K$ rather than a disk
centered at $0$: this costs nothing in the proof, and is what is actually
needed below, since the natural domains for negative and fractional
powers of $z$ are punctured disks and annuli, which are not convex and do
not contain the origin.

Let $D\subset\C$ be open and $f$ holomorphic on $D$, and let $K\subset D$
be compact, with $d:=\operatorname{dist}(K,\partial D)>0$ (set $d=+\infty$
if $D=\C$). Fix $d'\in(0,d)$ and let
$K':=\{z\in\C:\operatorname{dist}(z,K)\le d'\}$, a compact subset of $D$
(since every $z\in K'$ satisfies $\operatorname{dist}(z,\partial D)\ge
d-d'>0$). Since $f''$ is continuous on $D$, it is bounded on $K'$; set
\[
M_2:=\sup_{K'}|f''|<\infty.
\]

\begin{lemma}[Second-order Taylor remainder]\label{lem:taylor2}
For every $z\in K$ and $w\in\C$ with $|w-z|\le d'$, the segment $[z,w]$ is
contained in $K'$ (hence in $D$), and
\[
\big|f(w)-f(z)-f'(z)(w-z)\big|\le\frac{M_2}2|w-z|^2.
\]
\end{lemma}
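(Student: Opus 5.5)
The plan has two parts: first a geometric containment check, then an integral form of the Taylor remainder along the segment.

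\emph{Containment.} Fix $z\in K$ and $w$ with $|w-z|\le d'$. A point of $[z,w]$ has the form $\xi=z+s(w-z)$ with $s\in[0,1]$. Then
\[
\operatorname{dist}(\xi,K)\le|\xi-z|=s|w-z|\le d',
\]
so $\xi\in K'$. Since $K'\subset D$, as already observed before the statement, the whole segment lies in $D$.

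\emph{Remainder formula.} Set $g(s)=f(z+s(w-z))$ for $s\in[0,1]$. Because $f$ is holomorphic on a neighbourhood of the segment, $g$ is a $C^2$ complex-valued function of the real variable $s$. The chain rule for holomorphic functions composed with real-affine maps gives
\[
g'(s)=f'(z+s(w-z))(w-z),\qquad g''(s)=f''(z+s(w-z))(w-z)^2.
\]
Apply the fundamental theorem of calculus twice, componentwise on real and imaginary parts, which is justified by Section~\ref{sec:cont-riemann} together with the multidimensional target case. Equivalently, integrate by parts once. This yields
\[
g(1)-g(0)-g'(0)=\int_0^1(1-s)\,g''(s)\,ds,
\]
that is,
\[
f(w)-f(z)-f'(z)(w-z)=(w-z)^2\int_0^1(1-s)\,f''(z+s(w-z))\,ds.
\]

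\emph{Estimate.} Take absolute values, using $|\int h|\le\int|h|$ (established in Section~\ref{sec:cont-riemann} and extended to complex values). Since the segment lies in $K'$, we have $|f''|\le M_2$ on it, so the right-hand side is bounded by
\[
|w-z|^2M_2\int_0^1(1-s)\,ds=\frac{M_2}{2}|w-z|^2.
\]

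\emph{Main obstacle.} The only delicate point is justifying $|\int h|\le\int|h|$ for complex-valued $h$, since the paper proved it only for real integrands. I would handle it by the standard rotation trick: choose $\theta$ with $e^{-i\theta}\int h\ge0$, so that
\[
\Big|\int h\Big|=\int\operatorname{Re}\big(e^{-i\theta}h\big)\le\int|h|.
\]
A cruder alternative avoids this entirely: bounding the real and imaginary parts separately gives the constant $M_2$ instead of $M_2/2$, which would suffice for later uses but not for the statement as written.
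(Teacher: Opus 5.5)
Your proof is correct and is essentially the paper's: the containment argument is identical, and your remainder $\int_0^1(1-s)g''(s)\,ds$ is just the integration-by-parts (or Fubini) form of the paper's nested use of the fundamental theorem of calculus. The paper applies it first to $f$ and then to $f'$ along the same segment, getting $|f'(z+t(w-z))-f'(z)|\le M_2t|w-z|$ and then $\int_0^1t\,dt=\tfrac12$. Your rotation-trick justification of $|\int h|\le\int|h|$ for complex-valued $h$ supplies a step the paper uses without comment.
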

\begin{proof}
Every point $u=z+t(w-z)$, $t\in[0,1]$, of the segment satisfies
$\operatorname{dist}(u,K)\le|u-z|\le|w-z|\le d'$, so $u\in K'\subset D$;
in particular the segment is contained in $D$. Writing
$\gamma(t)=z+t(w-z)$, the real fundamental theorem of calculus applied to
$t\mapsto f(\gamma(t))$ (which is $C^1$ in $t$, since $\gamma([0,1])\subset D$) gives
\[
f(w)-f(z)=\int_0^1f'(z+t(w-z))(w-z)\,dt.
\]
Subtracting $f'(z)(w-z)$,
\[
f(w)-f(z)-f'(z)(w-z)=(w-z)\int_0^1\big[f'(z+t(w-z))-f'(z)\big]\,dt.
\]
Applying the same identity to $f'$ (with $f''$ in place of $f'$, still
along the same segment $\subset K'$),
\[
f'(z+t(w-z))-f'(z)=\int_0^tf''(z+s(w-z))(w-z)\,ds,
\]
\[
|f'(z+t(w-z))-f'(z)|\le M_2\,t\,|w-z|.
\]
Hence
\[
|f(w)-f(z)-f'(z)(w-z)|\le|w-z|\int_0^1M_2\,t\,|w-z|\,dt=\frac{M_2}2|w-z|^2.
\qedhere
\]
\end{proof}

\begin{thm}[Cauchy's theorem along strange nets]\label{thm:cauchynet}
Let $a,b\in K$ and let $\pi\in{\cal P}(a,b)$ be a partition with all nodes
$z_i\in K$ and arbitrary tags $\zeta_i$ with $|\zeta_i-z_i|\le|z_{i+1}-z_i|$, with the standing
hypothesis $Q(\pi)\to0$ as $\delta(\pi)\to0$ (and, eventually,
$\delta(\pi)\le d'$). If $f$ is holomorphic on $D\supset K$ then
\[
S(\pi):=\sum_{i=0}^{N-1}f'(\zeta_i)(z_{i+1}-z_i)\longrightarrow f(b)-f(a).
\]
\end{thm}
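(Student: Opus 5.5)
The argument is the one used for Lemma~\ref{lem:cplx1}: write $f(b)-f(a)$ as a telescoping sum and control the error by the quadratic remainder $Q(\pi)$. Write
\[
f(b)-f(a)=\sum_{i=0}^{N-1}\big(f(z_{i+1})-f(z_i)\big),
\]
so that
\[
S(\pi)-\big(f(b)-f(a)\big)=\sum_{i=0}^{N-1}\Big[f'(\zeta_i)(z_{i+1}-z_i)-\big(f(z_{i+1})-f(z_i)\big)\Big].
\]
Each bracket splits into two pieces. The first is the Taylor remainder $f(z_{i+1})-f(z_i)-f'(z_i)(z_{i+1}-z_i)$. The second is the tag correction $(f'(\zeta_i)-f'(z_i))(z_{i+1}-z_i)$. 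The goal is to bound both by a constant times $|z_{i+1}-z_i|^2$, so that the total error is $O(Q(\pi))$, which tends to $0$ by the standing hypothesis.

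For the Taylor part, I would restrict to the tail of the net where $\delta(\pi)\le d'$. There $z_i\in K$ and $|z_{i+1}-z_i|\le d'$, so Lemma~\ref{lem:taylor2} applies directly and gives the bound $\frac{M_2}2|z_{i+1}-z_i|^2$. Summing over $i$, this part contributes at most $\frac{M_2}2Q(\pi)$.

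For the tag correction, note that $|\zeta_i-z_i|\le|z_{i+1}-z_i|\le\delta(\pi)\le d'$. By the first assertion of Lemma~\ref{lem:taylor2}, the segment $[z_i,\zeta_i]$ lies in $K'$. Integrating $f''$ along that segment, exactly as in the proof of that lemma, gives
\[
|f'(\zeta_i)-f'(z_i)|\le M_2|\zeta_i-z_i|\le M_2|z_{i+1}-z_i|.
\]
So the tag part contributes at most $M_2Q(\pi)$. Altogether,
\[
|S(\pi)-(f(b)-f(a))|\le\tfrac32M_2\,Q(\pi)\to0.
\]

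The only delicate point is locating the segments. Lemma~\ref{lem:taylor2} needs its base point in $K$, and here the base point is $z_i$, which lies in $K$ by hypothesis. The tag $\zeta_i$ itself need not lie in $K$, but $f'$ is evaluated there only after moving along a segment that starts from $z_i\in K$, so the segment stays in $K'$. Both estimates therefore use the single constant $M_2=\sup_{K'}|f''|$, uniformly along the net, which is precisely the role of fixing $K$ and $K'$ described in Remark~\ref{rem:fixed-R}.
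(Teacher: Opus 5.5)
Your proposal is correct and is essentially the paper's proof: the paper likewise bounds the Taylor remainder with left-point tags by $\frac{M_2}{2}Q(\pi)$ via Lemma~\ref{lem:taylor2}. It bounds the tag correction by $M_2Q(\pi)$ using the first-order form of that lemma applied to $f'$ along a segment starting at $z_i\in K$, and combines the two into the same final estimate $\frac{3M_2}{2}Q(\pi)$.
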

\begin{proof}
\emph{Step 1} (tags $=z_i$). From $f(b)-f(a)=\sum_i[f(z_{i+1})-f(z_i)]$ and
Lemma~\ref{lem:taylor2} applied to each increment (valid once
$\delta(\pi)\le d'$, since $z_i,z_{i+1}\in K$),
\[
\Big|f(b)-f(a)-\sum_if'(z_i)(z_{i+1}-z_i)\Big|\le\sum_i\frac{M_2}2|z_{i+1}-z_i|^2
=\frac{M_2}2Q(\pi).
\]

\emph{Step 2} (arbitrary tags $\zeta_i$). Applying Lemma~\ref{lem:taylor2}
in first-order form to $f'$ (whose derivative $f''$ is bounded by $M_2$ on
$K'$) gives $|f'(\zeta_i)-f'(z_i)|\le M_2|\zeta_i-z_i|\le M_2|z_{i+1}-z_i|$,
hence
\[
\Big|\sum_if'(\zeta_i)(z_{i+1}-z_i)-\sum_if'(z_i)(z_{i+1}-z_i)\Big|
\le\sum_iM_2|z_{i+1}-z_i|^2=M_2\,Q(\pi).
\]

\emph{Step 3.} By the triangle inequality,
\[
\big|S(\pi)-(f(b)-f(a))\big|\le\frac{3M_2}2\,Q(\pi)
\xrightarrow[\delta(\pi)\to0]{}0. \qedhere
\]
\end{proof}

\begin{cor}[Classical Cauchy theorem]\label{cor:cauchyclassic}
If $g$ is holomorphic on a simply connected open set $D$ and $K\subset D$
is compact, $g$ admits a primitive $f$ on $D$ (i.e.\ $f'=g$);
Theorem~\ref{thm:cauchynet} then gives
\[
\int_a^bg(z)\,dz=f(b)-f(a),
\]
independently of the strange net chosen to join $a$ to $b$ inside $K$; in
particular, for $a=b$,
\[
\oint_\gamma g(z)\,dz=0,
\]
the classical Cauchy theorem -- now valid along any net with
$Q(\pi)\to0$, including paths of infinite total variation (e.g.
$\alpha$-H\"older with $1/2<\alpha<1$), not only rectifiable ones.
\end{cor}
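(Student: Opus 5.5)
The plan is to reduce the corollary to Theorem~\ref{thm:cauchynet}, applied to a primitive of $g$. Two things are needed: existence of the primitive $f$ on $D$, and the hypotheses of the theorem for that $f$.

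\emph{Step 1: the primitive.} Since $D$ is open and simply connected, it is path-connected. Fix $z_*\in D$ and define
\[
f(z):=\int_{\gamma_z}g(w)\,dw,
\]
where $\gamma_z$ is any polygonal path in $D$ from $z_*$ to $z$, and the integral is the classical line integral. Well-definedness requires that $g$ integrate to zero over every closed polygon in $D$. I would obtain this from Goursat's lemma on triangles, together with simple connectivity: each closed polygon is null-homotopic, and the homotopy can be discretized into small triangles, each contained in a disk inside $D$. On such a disk $g$ already has a primitive by the local, convex-domain version of the same construction, so the triangle integrals vanish. Once $f$ is well defined, $f'=g$ follows by comparing $f(z+h)-f(z)$ with the integral over the segment $[z,z+h]$, using continuity of $g$. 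This step is the main obstacle: it is the only genuinely topological ingredient, and it is not supplied by the Riemann-sum machinery of the paper. I would cite it as the classical existence of primitives on simply connected domains rather than reprove it.

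\emph{Step 2: applying Theorem~\ref{thm:cauchynet}.} Since $f'=g$ is holomorphic, $f$ is holomorphic on $D$ and $f''=g'$ is continuous. Choose $d'$ and $K'$ as in the set-up before Lemma~\ref{lem:taylor2}, so that $M_2=\sup_{K'}|g'|<\infty$. Now take any admissible net with nodes in $K$, satisfying $Q(\pi)\to0$ and eventually $\delta(\pi)\le d'$ (the latter holds automatically by Remark~\ref{rem:delta-from-Q}). The estimate $|S(\pi)-(f(b)-f(a))|\le\frac{3M_2}2Q(\pi)$ from Step~3 of that proof then gives
\[
\sum_i g(\zeta_i)(z_{i+1}-z_i)\to f(b)-f(a).
\]
The limit depends only on the endpoints, which is exactly the claimed independence of the strange net.

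\emph{Step 3: the closed case and the H\"older case.} For $a=b$ the limit is $f(a)-f(a)=0$, which is the closed-loop statement. For an $\alpha$-H\"older loop $\gamma\subset K$ with $\alpha>1/2$, Remark~\ref{rem:holder} shows that partitions refining $\gamma$ satisfy $Q(\pi)\to0$, so they are admissible and the statement covers paths of infinite total variation. One point to check is that the nodes of such partitions lie in $K$. This holds if we take $K\supseteq\gamma([0,1])$, which is compact as the continuous image of $[0,1]$. Finally, all nodes must lie in the fixed disk of radius $R$; we choose $R$ large enough to contain $K$.
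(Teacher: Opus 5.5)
Your proposal is correct and takes essentially the paper's own route. The paper likewise takes the existence of a primitive $f$ of $g$ on the simply connected $D$ as the classical input, applies Theorem~\ref{thm:cauchynet} to $f$ (whose limit $f(b)-f(a)$ depends only on the endpoints, via the bound $\frac{3M_2}{2}Q(\pi)$), and uses Remark~\ref{rem:holder} for the H\"older loops. Your extra checks only make explicit what the paper leaves implicit: the eventual bound $\delta(\pi)\le d'$ from $\delta(\pi)^2\le Q(\pi)$ (Remark~\ref{rem:delta-from-Q}), taking $K\supseteq\gamma([0,1])$, and choosing $R$ so that the fixed disk contains $K$.
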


\subsection{Negative powers}

We now compute $\int z^{-m}\,dz$ for an integer $m\ge1$, along a net
avoiding the origin. The cases $m\ge2$ and $m=1$ behave very differently:
$\operatorname{Res}_0(z^{-m})=0$ for $m\ge2$, while
$\operatorname{Res}_0(z^{-1})=1$.

\subsubsection{The case $m\ge2$}

For $m\ge2$, the function $F(z)=z^{1-m}/(1-m)$ is a single-valued
primitive of $z^{-m}$ on $D=\C\setminus\{0\}$ (there is no branch point,
since $1-m\le-1$ is an integer), so Theorem~\ref{thm:cauchynet} applies
directly -- with $K=A$ a compact annulus, exactly the setting for which
that theorem was stated -- on any compact annulus $A=\{r\le|z|\le R\}\subset D$
containing the net.

\begin{cor}\label{cor:negpower}
Let $\pi\in{\cal P}(a,b)$ with $a,b\in D$ and all nodes and tags in a
compact annulus $A\subset D$, and let $m\ge2$. Then, under the standing
hypothesis $Q(\pi)\to0$,
\[
\sum_{i=0}^{N-1}\zeta_i^{-m}(z_{i+1}-z_i)\longrightarrow
\int_a^bz^{-m}\,dz=\frac{b^{1-m}-a^{1-m}}{1-m},
\]
independently of the strange net chosen to join $a$ to $b$ inside $A$ --
in particular, independently of how many times the net winds around the
origin.
\end{cor}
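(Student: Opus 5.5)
The corollary follows from Theorem~\ref{thm:cauchynet}, applied with $D=\C\setminus\{0\}$, $K=A$ and $f=F$, where
\[
F(z)=\frac{z^{1-m}}{1-m}.
\]

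\emph{Step 1: $F$ is an admissible single-valued primitive.} Because $1-m\le-1$ is an integer, $F$ is holomorphic on $D$ and $F'(z)=z^{-m}$ there. No branch choice enters, so $F(b)-F(a)$ depends only on the endpoints.

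\emph{Step 2: the geometric data.} For the annulus $A=\{r\le|z|\le R\}$ we have $d=\operatorname{dist}(A,\partial D)=r>0$. Fix $d'=r/2$, so that
\[
K'=\{r/2\le|z|\le R+r/2\}\subset D.
\]
On $K'$,
\[
M_2=\sup_{K'}|F''|=\sup_{K'} m|z|^{-m-1}=m(r/2)^{-m-1}<\infty.
\]
By Remark~\ref{rem:delta-from-Q}, $\delta(\pi)^2\le Q(\pi)$. Hence the standing hypothesis $Q(\pi)\to0$ makes $\delta(\pi)\le d'$ hold eventually along the net, which is the extra condition required in Theorem~\ref{thm:cauchynet}.

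\emph{Step 3: tags.} The corollary assumes the tags lie in $A$. I will also take the tag convention of Theorem~\ref{thm:cauchynet}, namely $|\zeta_i-z_i|\le|z_{i+1}-z_i|$. Once $\delta(\pi)\le d'$, this puts each segment $[z_i,\zeta_i]$ inside $K'$. That is all Step~2 of the proof of Theorem~\ref{thm:cauchynet} needs to get $|F'(\zeta_i)-F'(z_i)|\le M_2|z_{i+1}-z_i|$.

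\emph{Step 4: conclusion.} Theorem~\ref{thm:cauchynet} now gives
\[
\Big|\sum_i\zeta_i^{-m}(z_{i+1}-z_i)-\frac{b^{1-m}-a^{1-m}}{1-m}\Big|\le\frac{3M_2}{2}\,Q(\pi)\longrightarrow0.
\]

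\emph{Winding independence.} The estimate above uses only the increments $F(z_{i+1})-F(z_i)$, which telescope exactly to $F(b)-F(a)$ because $F$ is single-valued. It also uses only local Taylor bounds on segments of length at most $d'<r$, each contained in $K'$, which never contains the origin. So no step refers to how the net sits globally around $0$. The winding of the net is therefore irrelevant; equivalently, $\operatorname{Res}_0 z^{-m}=0$ for $m\ge2$.

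\emph{Main obstacle.} The step needing care is making sure every local segment stays in the region where $F$ is controlled, since $A$ is not convex. Choosing $d'<r$ handles this, because Lemma~\ref{lem:taylor2} only ever uses segments of length at most $d'$ starting from a point of $K=A$.
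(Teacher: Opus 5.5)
Your proof is correct and is the paper's route: the paper's proof is the one-line application of Theorem~\ref{thm:cauchynet} to $f=F=z^{1-m}/(1-m)$ on $D=\C\setminus\{0\}$ with $K=A$. You additionally spell out what the paper leaves implicit: the constants $d=r$, $d'=r/2$, $M_2=m(r/2)^{-m-1}$, the tag convention $|\zeta_i-z_i|\le|z_{i+1}-z_i|$, and the eventual condition $\delta(\pi)\le d'$ obtained from $\delta(\pi)^2\le Q(\pi)$.
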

\begin{proof}
Immediate from Theorem~\ref{thm:cauchynet} applied to $f=F$ on $D=\C\setminus\{0\}$
with $K=A$, since $F'=z^{-m}$ there.
\end{proof}

\subsubsection{The case $m=1$}\label{sec:argument-m1}

Here the only local primitive is $\mathrm{Log}\,z$, which is not
single-valued on $\C\setminus\{0\}$: its value jumps by $2\pi i$ upon
winding around the origin. Throughout, $\arg_0(z)\in(-\pi,\pi]$ denotes
the principal argument of $z\in\C\setminus\{0\}$, and
$\mathrm{Log}\,z:=\ln|z|+i\arg_0(z)$ the principal branch of the
logarithm; more generally, for $w\ne0$ and any exponent $\lambda\in\C$,
$(w^\lambda)_0:=|w|^\lambda e^{i\lambda\arg_0(w)}$ denotes the principal
branch of $w^\lambda$, evaluated with this same choice of $\arg_0$.
Consequently ${\cal P}(a,b)$, as a family of
arbitrary point sets with $\delta(\pi)\to0$, is no longer Cauchy for
$g(z)=1/z$: two admissible nets joining $a$ to $b$ but winding differently
around $0$ may yield limits differing by a multiple of $2\pi i$. We must
therefore fix a continuous path and let the net refine along it.

Let $\gamma:[0,1]\to D=\C\setminus\{0\}$ be continuous, $\gamma(0)=a$,
$\gamma(1)=b$, with $\gamma([0,1])\subset A=\{r\le|z|\le R\}$. Since
$\gamma$ is continuous and non-vanishing, it admits a continuous argument
$\theta:[0,1]\to\R$ with $\gamma(t)=|\gamma(t)|e^{i\theta(t)}$; the
\emph{total turning} of $\gamma$ is $\Theta_\gamma:=\theta(1)-\theta(0)$,
and $\Theta_\gamma=\arg_0(b)-\arg_0(a)+2\pi n(\gamma)$ for a unique winding
number $n(\gamma)\in\Z$. Partitions are now taken along $\gamma$:
$z_k=\gamma(t_k)$ for $0=t_0<\cdots<t_N=1$, with tags $\zeta_k$ such that
$|\zeta_k-z_k|\le|z_{k+1}-z_k|$ (as in Section~\ref{sec:tags-convention}), and $\delta(\pi)$,
$Q(\pi)$ as before.

\begin{lemma}\label{lem:logtel}
For $\delta(\pi)$ small enough (depending only on the modulus of
continuity of $\theta$ on $[0,1]$),
\[
\sum_{k=0}^{N-1}\mathrm{Log}\Big(\frac{z_{k+1}}{z_k}\Big)
=\ln\Big|\frac ba\Big|+i\,\Theta_\gamma,
\]
where $\mathrm{Log}$ denotes the principal branch.
\end{lemma}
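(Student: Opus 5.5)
Split each principal logarithm into its modulus and argument parts. Write $\mathrm{Log}(z_{k+1}/z_k)=\ln|z_{k+1}/z_k|+i\arg_0(z_{k+1}/z_k)$. The real parts telescope at once:
\[
\sum_k\ln|z_{k+1}|-\ln|z_k|=\ln|b|-\ln|a|=\ln|b/a|.
\]
This holds for every partition and needs no smallness. All the work is therefore in the imaginary part: we must show $\sum_k\arg_0(z_{k+1}/z_k)=\theta(1)-\theta(0)=\Theta_\gamma$. This also telescopes, once we know that each principal argument $\arg_0(z_{k+1}/z_k)$ equals the increment $\theta(t_{k+1})-\theta(t_k)$ of the continuous argument along $\gamma$.

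To establish this, use the fact that $\gamma([0,1])\subset A$ and $\theta$ is continuous on the compact interval $[0,1]$, hence uniformly continuous. Choose $\eta>0$ such that $|t-s|\le\eta$ implies $|\theta(t)-\theta(s)|<\pi$. Here "$\delta(\pi)$ small enough" should be read as the parameter mesh $\max_k(t_{k+1}-t_k)\le\eta$; this is what the statement means by dependence on the modulus of continuity of $\theta$. If one insists on the spatial mesh $\max|z_{k+1}-z_k|$ instead, small spatial mesh alone does not control the parameter gaps. In that reading I would add a remark that the intended condition is the parameter mesh along $\gamma$, as is standard for partitions of a path.

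Given the parameter mesh condition, set $\Delta_k:=\theta(t_{k+1})-\theta(t_k)\in(-\pi,\pi)$. Then
\[
\frac{z_{k+1}}{z_k}=\frac{|z_{k+1}|}{|z_k|}\,e^{i\Delta_k},
\]
so $\Delta_k$ is an argument of $z_{k+1}/z_k$ lying in $(-\pi,\pi]$. By uniqueness of the principal argument, $\arg_0(z_{k+1}/z_k)=\Delta_k$. Summing over $k$ telescopes to $\theta(1)-\theta(0)=\Theta_\gamma$. Combined with the modulus part, this gives the claim.

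The main obstacle is exactly this identification $\arg_0(z_{k+1}/z_k)=\Delta_k$, i.e.\ ruling out $2\pi$ jumps of the principal branch in individual steps. It is handled entirely by the strict bound $|\Delta_k|<\pi$ from uniform continuity of $\theta$. The other points, uniqueness of the principal argument in $(-\pi,\pi]$ and the fact that $z_k\neq0$ because $\gamma$ stays in $A$, are routine. Finally, I would note that the right-hand side equals $\mathrm{Log}\,b-\mathrm{Log}\,a+2\pi i\,n(\gamma)$ via $\Theta_\gamma=\arg_0(b)-\arg_0(a)+2\pi n(\gamma)$, for use in the subsequent winding-number theorem.
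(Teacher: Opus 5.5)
Your proof is correct and is essentially the paper's argument: uniform continuity of $\theta$ gives $|\theta(t_{k+1})-\theta(t_k)|<\pi$, so the principal branch produces no $2\pi i$ jump, and both the real and imaginary parts telescope. Your point that ``$\delta(\pi)$ small'' must mean the parameter mesh $\max_k(t_{k+1}-t_k)$ is well taken; the paper's proof uses this implicitly, and with the spatial mesh $\max_k|z_{k+1}-z_k|$ alone the identity can fail, because a partition may skip an entire loop of $\gamma$ while keeping consecutive points close.
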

\begin{proof}
By uniform continuity of $\theta$, choose $\delta(\pi)$ small enough that
$|\theta(t_{k+1})-\theta(t_k)|<\pi$ for every $k$. Then
$z_{k+1}/z_k=(|z_{k+1}|/|z_k|)e^{i(\theta(t_{k+1})-\theta(t_k))}$, and
since the exponent has modulus $<\pi$, the principal branch gives exactly
\[
\mathrm{Log}(z_{k+1}/z_k)=\big[\ln|z_{k+1}|-\ln|z_k|\big]
+i\big[\theta(t_{k+1})-\theta(t_k)\big]
\]
(no $2\pi i$ jump). Summing over $k$, the real part telescopes to
$\ln|b|-\ln|a|$ and the imaginary part to $\theta(1)-\theta(0)=\Theta_\gamma$.
\end{proof}

\begin{lemma}\label{lem:logest}
For $z,\zeta\in\C$ with $|z|\ge r$, $|\zeta-z|\le|w-z|\le r/2$, and
$w\in\C$ with $|w|\ge r$,
\[
\big|\zeta^{-1}(w-z)-\mathrm{Log}(w/z)\big|\le\frac C{r^2}\,|w-z|^2
\]
for an absolute constant $C$.
\end{lemma}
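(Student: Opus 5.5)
The plan is to split the difference into two pieces,
\[
\zeta^{-1}(w-z)-\mathrm{Log}(w/z)
=\big[\zeta^{-1}-z^{-1}\big](w-z)+\big[z^{-1}(w-z)-\mathrm{Log}(1+u)\big],
\qquad u:=\frac{w-z}{z},
\]
and to bound each piece by a constant times $|w-z|^2/r^2$. We use that $w/z=1+u$. Since $|w-z|\le r/2$ and $|z|\ge r$, we have $|u|\le 1/2$, so $1+u$ lies in the disk $\{|v-1|\le1/2\}$. That disk is contained in the right half-plane, where the principal $\mathrm{Log}$ is holomorphic and agrees with its power series. This is the only place where the choice of branch matters. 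Because $|u|\le1/2$, no $2\pi i$ ambiguity can arise, so no argument bookkeeping as in Lemma~\ref{lem:logtel} is needed.

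For the first bracket, note that $|\zeta|\ge|z|-|\zeta-z|\ge r-r/2=r/2$. Hence
\[
|\zeta^{-1}-z^{-1}|=\frac{|z-\zeta|}{|\zeta||z|}\le\frac{2}{r^2}\,|w-z|,
\]
and multiplying by $|w-z|$ gives a contribution at most $2|w-z|^2/r^2$.

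For the second bracket, observe that $z^{-1}(w-z)=u$, so we need to bound $|u-\mathrm{Log}(1+u)|$ for $|u|\le1/2$. I would use the series $\mathrm{Log}(1+u)=\sum_{n\ge1}(-1)^{n+1}u^n/n$, which gives
\[
|u-\mathrm{Log}(1+u)|\le\sum_{n\ge2}\frac{|u|^n}{n}\le\frac{|u|^2}{2}\cdot\frac1{1-|u|}\le|u|^2.
\]
Alternatively, one could apply Lemma~\ref{lem:taylor2} to $\mathrm{Log}$ at the point $1$, using $|(\mathrm{Log})''|=|v|^{-2}\le4$ on $\{|v-1|\le1/2\}$. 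Then $|u|^2=|w-z|^2/|z|^2\le|w-z|^2/r^2$.

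Adding the two bounds gives the claim with $C=3$. The only delicate step is confirming that the principal branch coincides with the power series on the relevant disk. That holds because $\{|v-1|<1\}$ avoids the cut $(-\infty,0]$. With that in place, the remaining steps are routine triangle inequalities. Note that the hypothesis $|w|\ge r$ is not actually needed beyond ensuring $w\neq0$, which already follows from $|u|\le1/2$.
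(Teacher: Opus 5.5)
Your proof is correct and follows the paper's argument essentially verbatim. You use the same split into $[\zeta^{-1}-z^{-1}](w-z)$ plus $u-\mathrm{Log}(1+u)$ with $u=(w-z)/z$, the same bound $|\zeta|\ge r/2$ giving $2|w-z|^2/r^2$, and a second-order Taylor bound for the logarithm on $|u|\le 1/2$; beyond that, you make the constant explicit ($C_1=1$, hence $C=3$ where the paper leaves $C_1$ unspecified), you justify that the principal branch agrees with the power series on $|v-1|<1$, and you rightly note that the hypothesis $|w|\ge r$ is superfluous.
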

\begin{proof}
Set $u=(w-z)/z$, so $|u|\le\delta(\pi)/r\le1/2$. By the Taylor expansion
of the logarithm, $|\mathrm{Log}(1+u)-u|\le C_1|u|^2$, hence
$|\mathrm{Log}(w/z)-(w-z)/z|\le C_1|w-z|^2/r^2$. On the other hand, since
$\zeta$ need not lie on the segment $[z,w]$ -- only
$|\zeta-z|\le|w-z|\le r/2$ is guaranteed -- we only get
$|\zeta|\ge|z|-|\zeta-z|\ge r-r/2=r/2$, so
\[
\big|\zeta^{-1}(w-z)-z^{-1}(w-z)\big|=|w-z|\,\frac{|\zeta-z|}{|\zeta||z|}
\le\frac{|w-z|^2}{(r/2)\,r}=\frac{2|w-z|^2}{r^2}
\]
using $|\zeta-z|\le|w-z|$, $|\zeta|\ge r/2$, $|z|\ge r$. The triangle
inequality gives the claim with $C=C_1+2$.
\end{proof}

\begin{thm}\label{thm:logwinding}
With the notation above,
\begin{align*}
S(\pi):=\sum_{k=0}^{N-1}\zeta_k^{-1}(z_{k+1}-z_k)&\longrightarrow
\ln\Big|\frac ba\Big|+i\,\Theta_\gamma\\
&=:\int_\gamma\frac{dz}z=\mathrm{Log}(b)-\mathrm{Log}(a)+2\pi i\,n(\gamma),
\end{align*}
with the quantitative estimate
\[
\Big|S(\pi)-\Big(\ln\big|\tfrac ba\big|+i\Theta_\gamma\Big)\Big|
\le\frac C{r^2}\,Q(\pi).
\]
\end{thm}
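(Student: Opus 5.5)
The plan is to compare $S(\pi)$ term by term with the telescoping sum of Lemma~\ref{lem:logtel}, using the local estimate of Lemma~\ref{lem:logest}. This is the same two-step pattern as the proof of Theorem~\ref{thm:cauchynet}: an exact telescoping identity, plus a quadratic remainder controlled by $Q(\pi)$.

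First I fix the regime in which both lemmas apply. Since $\gamma$ and its continuous argument $\theta$ are uniformly continuous on $[0,1]$, there is $\eta>0$ such that $|t_{k+1}-t_k|<\eta$ for all $k$ implies two things:
\[
|\theta(t_{k+1})-\theta(t_k)|<\pi, \qquad |z_{k+1}-z_k|\le r/2 .
\]
All nodes satisfy $|z_k|\ge r$ because $\gamma([0,1])\subset A$. The tags satisfy $|\zeta_k-z_k|\le|z_{k+1}-z_k|\le r/2$ by the convention of Section~\ref{sec:tags-convention}. So for every partition fine enough along $\gamma$, the hypotheses of Lemma~\ref{lem:logest} hold with $z=z_k$, $w=z_{k+1}$, $\zeta=\zeta_k$. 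Lemma~\ref{lem:logtel} gives the exact identity
\[
\sum_k\mathrm{Log}(z_{k+1}/z_k)=\ln|b/a|+i\Theta_\gamma .
\]

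Next I subtract this identity from $S(\pi)$ and apply the triangle inequality:
\[
\Big|S(\pi)-\big(\ln|b/a|+i\Theta_\gamma\big)\Big|\le\sum_k\big|\zeta_k^{-1}(z_{k+1}-z_k)-\mathrm{Log}(z_{k+1}/z_k)\big|\le\frac C{r^2}\sum_k|z_{k+1}-z_k|^2=\frac C{r^2}Q(\pi).
\]
This is the quantitative estimate in the statement. Convergence then follows from the standing hypothesis $Q(\pi)\to0$. If $\gamma$ is only continuous, one adds this hypothesis explicitly for the net along $\gamma$; for H\"older $\gamma$ with $\alpha>1/2$ it is automatic by Remark~\ref{rem:holder}.

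Finally I identify the limit. By definition $\Theta_\gamma=\arg_0(b)-\arg_0(a)+2\pi n(\gamma)$, and $\mathrm{Log}(b)-\mathrm{Log}(a)=\ln|b/a|+i(\arg_0 b-\arg_0 a)$. Hence
\[
\ln|b/a|+i\Theta_\gamma=\mathrm{Log}(b)-\mathrm{Log}(a)+2\pi i\,n(\gamma),
\]
which is the right-hand side of the statement.

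The main obstacle is the fineness requirement. The net is directed by $\delta(\pi)$ in $\C$, but Lemma~\ref{lem:logtel} needs small steps in the parameter $t$, to control jumps of $\theta$. A small $|z_{k+1}-z_k|$ alone does not rule out $\gamma$ making a large loop between $t_k$ and $t_{k+1}$. I would resolve this by taking the net along $\gamma$ to be directed by the parameter mesh $\max_k|t_{k+1}-t_k|\to0$, which implies $\delta(\pi)\to0$ by uniform continuity of $\gamma$. This is the reading the phrase ``partitions taken along $\gamma$'' suggests. Alternatively, one can note that once the chord is shorter than $r$, the principal-branch increment is determined by the chord alone, so only the telescoping along $\gamma$'s argument needs the parameter-mesh control.
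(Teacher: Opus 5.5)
Your proof is correct and is essentially the paper's own argument: Lemma~\ref{lem:logest}, applied to each increment and summed, bounds $|S(\pi)-\sum_k\mathrm{Log}(z_{k+1}/z_k)|$ by $\frac C{r^2}Q(\pi)$, and Lemma~\ref{lem:logtel} evaluates the telescoping sum exactly, with convergence then coming from the standing hypothesis $Q(\pi)\to0$. Your point that Lemma~\ref{lem:logtel} actually needs the parameter mesh $\max_k|t_{k+1}-t_k|$ to be small is a correct sharpening of the paper's wording ``$\delta(\pi)$ small enough (depending only on the modulus of continuity of $\theta$)'', since a short chord does not rule out $\gamma$ looping around $0$ between $t_k$ and $t_{k+1}$.
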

\begin{proof}
By Lemma~\ref{lem:logest} applied to each increment and summed,
\[
\Big|S(\pi)-\sum_k\mathrm{Log}(z_{k+1}/z_k)\Big|
\le\frac C{r^2}\sum_k|z_{k+1}-z_k|^2=\frac C{r^2}Q(\pi).
\]
Combining with Lemma~\ref{lem:logtel} (valid once $\delta(\pi)$ is small),
$\sum_k\mathrm{Log}(z_{k+1}/z_k)=\ln|b/a|+i\Theta_\gamma$ exactly, whence
the claim. \qedhere
\end{proof}

\begin{remark}
The winding number $n(\gamma)\in\Z$ depends on the homotopy class of the
path in $\C\setminus\{0\}$ rel endpoints, not merely on $a,b$: paths that
wind differently around the origin yield integrals differing by
multiples of $2\pi i$. This is the residue $\operatorname{Res}_0(1/z)=1$
appearing once per winding, and stands in sharp contrast with the case
$m\ge2$, where $\operatorname{Res}_0(z^{-m})=0$ and the primitive is
single-valued, so the integral sees nothing of the shape of the net. In
this sense the present framework recovers the contribution of the
residue $1$ of the simple pole at $0$, through the winding number.
\end{remark}

\subsection{Fractional and complex powers}

We finally treat $z^s$ for $s\in\C\setminus\Z$, which is multi-valued: the
right setting is the Riemann surface of the logarithm. Let
\[
\widetilde D:=\{(\rho,\theta):\rho>0,\ \theta\in\R\},\qquad
p(\rho,\theta):=\rho e^{i\theta},
\]
the universal cover of $D=\C\setminus\{0\}$. On $\widetilde D$ the function
$\widehat{z^s}(\rho,\theta):=\rho^se^{is\theta}$ is genuinely
single-valued. Given $\gamma\colon[0,1]\to D$ continuous with
$\gamma(0)=a$, $\gamma(1)=b$, $\gamma([0,1])\subset A=\{r\le|z|\le R\}$,
and $\theta(t)$ its continuous argument as in Section~\ref{sec:argument-m1} (with
$\theta(0)=\arg_0(a)$), the lift $\widetilde\gamma(t)=(\rho(t),\theta(t))$,
$\rho(t)=|\gamma(t)|$, satisfies $p\circ\widetilde\gamma=\gamma$. Since
$[0,1]$ is compact and $\theta$ continuous, $\theta([0,1])\subset
[\theta_-,\theta_+]$ for some finite $\theta_\pm$.

For $\delta(\pi)<r/2$, the same distance argument as in
Lemma~\ref{lem:taylor2} shows that $z_{k+1}$ and any tag $\zeta_k$ with
$|\zeta_k-z_k|\le|z_{k+1}-z_k|$ lie within a slit disk around $z_k$
avoiding the origin, on which the branch of $z^{s+1}$ obtained by
continuation along $\widetilde\gamma$ from $z_k$ is holomorphic; set
\[
M_s:=|s(s+1)|\sup_{\rho\in[r/2,2R],\ \theta\in[\theta_--\pi/4,\theta_++\pi/4]}
\rho^{\Re s-1}e^{-\Im s\,\theta}<\infty,
\]
a single constant valid uniformly in $k$, finite by compactness of the
rectangle above (this is the analogue of $M_2$ for $f(z)=z^{s+1}$, since
$f''(z)=s(s+1)z^{s-1}$ and $|z^{s-1}|=\rho^{\Re s-1}e^{-\Im s\,\theta}$ on
the branch determined by $\widetilde\gamma$). Set
$Z(t):=\widehat{z^{s+1}}(\rho(t),\theta(t))$, continuous and single-valued
on $[0,1]$.

\begin{thm}\label{thm:fracpower}
Let $s\in\C\setminus\{-1\}$. With the notation above, tags $\zeta_k$ with
$|\zeta_k-z_k|\le|z_{k+1}-z_k|$, and the standing hypothesis $Q(\pi)\to0$,
\[
S(\pi):=\sum_{k=0}^{N-1}\zeta_k^s(z_{k+1}-z_k)\longrightarrow
\frac1{s+1}\Big[(b^{s+1})_0\,e^{2\pi i(s+1)n(\gamma)}-(a^{s+1})_0\Big]
\]
\[
=:\int_\gamma z^s\,dz,
\]
where $(\cdot)_0$ denotes the principal branch and $n(\gamma)\in\Z$ is the
winding number of $\gamma$ around $0$, with
\[
\Big|S(\pi)-\int_\gamma z^s\,dz\Big|\le\frac{M_s}2\,Q(\pi).
\]
\end{thm}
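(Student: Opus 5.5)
The plan is to reduce Theorem~\ref{thm:fracpower} to the mechanism of Theorem~\ref{thm:cauchynet}, applied on the universal cover $\widetilde D$ instead of on $D$. I would work with the primitive $F:=\widehat{z^{s+1}}/(s+1)$, which is single-valued along the lift $\widetilde\gamma$, and with the values $Z(t_k)/(s+1)=F(\widetilde\gamma(t_k))$.

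\emph{Step 1: the exact telescoping.} Since $Z$ is continuous and single-valued on $[0,1]$,
\[
\sum_{k}\frac{Z(t_{k+1})-Z(t_k)}{s+1}=\frac{Z(1)-Z(0)}{s+1}.
\]
With $\theta(0)=\arg_0(a)$ we have $Z(0)=(a^{s+1})_0$. Moreover $\theta(1)=\arg_0(b)+2\pi n(\gamma)$, so $Z(1)=|b|^{s+1}e^{i(s+1)\arg_0 b}\,e^{2\pi i(s+1)n(\gamma)}$. This is precisely the claimed limit, in the same way that Lemma~\ref{lem:logtel} gives the exact value for $m=1$. This step is identity-level and has no error term.

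\emph{Step 2: a local holomorphic branch on each step.} Fix $k$. Once $\delta(\pi)<r/2$ (and $\delta(\pi)$ is small enough, by uniform continuity of $\theta$, that consecutive arguments differ by less than $\pi/4$), the disk $B_k=\{|u-z_k|\le|z_{k+1}-z_k|\}$ contains $z_{k+1}$ and $\zeta_k$ and avoids the origin. On $B_k$, continuation of $\widehat{z^{s+1}}$ along $\widetilde\gamma$ from $z_k$ gives a holomorphic branch $f_k$. This branch satisfies $f_k(z_k)=Z(t_k)$ and $f_k(z_{k+1})=Z(t_{k+1})$; for the latter I would check that the continuous argument of the segment $[z_k,z_{k+1}]$ agrees with $\theta(t_{k+1})$, which uses $|\theta(t_{k+1})-\theta(t_k)|<\pi/4$. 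Every point $u$ of $B_k$ has $|u|\in[r/2,2R]$ and argument in $[\theta_--\pi/4,\theta_++\pi/4]$, so $|f_k''(u)|\le M_s$ uniformly in $k$. Also $f_k'(\zeta_k)=(s+1)\zeta_k^s$, with $\zeta_k^s$ read on the branch determined by $\widetilde\gamma$.

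\emph{Step 3: a one-shot estimate with tag built in.} The target constant $M_s/2$ should come from a single Taylor estimate that absorbs the tag, rather than the three-term splitting of Theorem~\ref{thm:cauchynet}, which would give $\tfrac32$. Write $w=z_{k+1}$, $z=z_k$, $\zeta=\zeta_k$. Then
\[
f_k(w)-f_k(z)-f_k'(\zeta)(w-z)=(w-z)\int_0^1\big[f_k'(z+t(w-z))-f_k'(\zeta)\big]\,dt .
\]
The inner bracket is bounded by $M_s\,|z+t(w-z)-\zeta|$, and I would estimate $\int_0^1|z+t(w-z)-\zeta|\,dt$ in terms of $|w-z|$. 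Dividing by $s+1$ and summing over $k$ with Step~1 gives
\[
\Big|S(\pi)-\int_\gamma z^s\,dz\Big|\le\frac{c\,M_s}{|s+1|}\,Q(\pi),
\]
where $c$ is the constant produced by that integral.

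\emph{Main obstacle.} The hard part is the bookkeeping of this constant, so as to land exactly on $\frac{M_s}2$. The crude bound $|u-\zeta|\le|u-z|+|z-\zeta|$ gives $c=\tfrac32$, and the factor $1/|s+1|$ from the primitive then has to be reconciled with the normalization of $M_s$, which carries $|s(s+1)|$ while the true second derivative of $F$ is $s z^{s-1}$. My first attempt would be to keep the factor $|s|\,|z^{s-1}|$ separate from $|s+1|$, and to use the sharper identity $\int_0^1|t(w-z)-(\zeta-z)|\,dt$, minimized over the admissible position of $\zeta$, to see whether the combined constant fits under $M_s/2$. If it does not for general tags, I would fall back on a finer choice of the enlargement rectangle in the definition of $M_s$ to absorb the remaining factor.
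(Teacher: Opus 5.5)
Your Steps~1 and~2 are the paper's argument. You telescope $Z$ exactly along the lift $\widetilde\gamma$, with $Z(0)=(a^{s+1})_0$ and $Z(1)=(b^{s+1})_0e^{2\pi i(s+1)n(\gamma)}$. You then bound each increment by a second-order Taylor estimate for the local branch of $z^{s+1}$, whose second derivative $s(s+1)z^{s-1}$ is bounded by $M_s$ on $B_k$. Your Step~3 with $c=\tfrac32$ is also correct, and it already proves the convergence claim:
\[
\Big|S(\pi)-\int_\gamma z^s\,dz\Big|\le\frac{3M_s}{2|s+1|}\,Q(\pi).
\]
The gap is the one you flag, and it cannot be closed, because the stated bound $\frac{M_s}2Q(\pi)$ is false. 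Minimizing $\int_0^1|t(w-z)-(\zeta-z)|\,dt$ over $\zeta$ uses the wrong quantifier. The tags are arbitrary, so what matters is the maximum, $\tfrac32|w-z|$ at $\zeta=2z-w$, and that maximum is attained. Here is a counterexample:
\begin{itemize}
\item take $s=1$ and $\gamma$ the segment from $a=1$ to $b=2$, so $r=1$, $R=2$, $M_s=2$ and $\int_\gamma z\,dz=\tfrac32$;
\item take $z_k=1+k/N$ and $\zeta_k=2z_k-z_{k+1}$;
\item then $S(\pi)=\tfrac32-\tfrac3{2N}$ and $Q(\pi)=\tfrac1N$, so the error is $\tfrac32Q(\pi)>Q(\pi)=\tfrac{M_s}2Q(\pi)$.
\end{itemize}
In this example $|z^{s-1}|\equiv1$, so no choice of enlargement rectangle helps, and redefining $M_s$ would change the statement anyway.

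The constant fails even for left tags when $|s+1|$ is small. Take real $s=-1+\eps$, the same segment, and $\zeta_k=z_k$. Each step contributes $\int_{z_k}^{z_{k+1}}(z_k^s-x^s)\,dx\ge\frac{1-\eps}8(z_{k+1}-z_k)^2$. On the other hand $\frac{M_s}2=(1-\eps)\eps\,2^{1-\eps}\le2\eps$, so the claimed inequality fails for $\eps<1/17$.

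The paper's proof has exactly your structure, and it reaches $\frac{M_s}2$ through two slips. First, it asserts the per-step bound $\frac{M_s}2|z_{k+1}-z_k|^2$ for arbitrary tags ``exactly as in Lemma~\ref{lem:taylor2}''. But that lemma evaluates $f'$ at $z_k$, not at $\zeta_k$. With a general tag the constant is $\frac{3M_s}2$, as in Step~2 of Theorem~\ref{thm:cauchynet}; in the example above the per-step error is $3|z_{k+1}-z_k|^2$. Second, dividing by $|s+1|$ yields $\frac{M_s}{2|s+1|}Q(\pi)$, not $\frac{M_s}2Q(\pi)$.

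So stop chasing the constant. The estimate you proved, $\frac{3M_s}{2|s+1|}Q(\pi)$ (or $\frac{M_s}{2|s+1|}Q(\pi)$ when $\zeta_k=z_k$), is the correct form, and the limit statement is unaffected.

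You also share one imprecision with the paper. The bound $|\theta(t_{k+1})-\theta(t_k)|<\pi/4$ follows from uniform continuity of $\theta$ only when the parameter mesh $\max_k(t_{k+1}-t_k)$ is small. A small $\delta(\pi)$ alone lets two consecutive nodes straddle a loop of $\gamma$ around $0$. In that case your $f_k(z_{k+1})$ differs from $Z(t_{k+1})$ by a factor $e^{\pm2\pi i(s+1)}$, and the telescoping no longer matches the sum. The net should therefore be required to refine $\gamma$ in the parameter.
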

\begin{proof}
By construction, $Z(1)-Z(0)=\sum_k[Z(t_{k+1})-Z(t_k)]$ exactly, with
$Z(1)=(b^{s+1})_0e^{2\pi i(s+1)n(\gamma)}$ and $Z(0)=(a^{s+1})_0$, since
$\theta(1)=\arg_0(b)+2\pi n(\gamma)$. Arguing exactly as in the proof of
Lemma~\ref{lem:taylor2}, applied to the local branch of $z^{s+1}$ around
each $z_k$,
\[
\big|Z(t_{k+1})-Z(t_k)-(s+1)\zeta_k^s(z_{k+1}-z_k)\big|
\le\frac{M_s}2|z_{k+1}-z_k|^2,
\]
and summing over $k$,
\[
\big|(s+1)S(\pi)-(Z(1)-Z(0))\big|\le\frac{M_s}2\,Q(\pi),
\]
whence the claim upon dividing by $|s+1|$. \qedhere
\end{proof}

\begin{remark}
Theorem~\ref{thm:fracpower} contains the previous results as special
cases. If $s=m-1$ for an integer $m\ge1$, then $s+1=m\in\Z$, so
$e^{2\pi i(s+1)n(\gamma)}=1$ and no dependence on the winding survives:
this recovers Theorem~\ref{thm:cplx}. If $s=-m$ for an integer $m\ge2$,
then $s+1=1-m\in\Z$ and again the exponential factor is $1$: this
recovers Corollary~\ref{cor:negpower}. The case $s=-1$ is excluded here
because $s+1=0$; it is the degenerate case treated separately in
Theorem~\ref{thm:logwinding}, and is recovered in the limit $s\to-1$ by
$\lim_{s\to-1}(z^{s+1}-1)/(s+1)=\log z$ (l'H\^opital). For $s$ genuinely
fractional or non-real, the factor $e^{2\pi i(s+1)n(\gamma)}\ne1$ in
general: the value of the integral depends essentially on the homotopy
class of the path in $\C\setminus\{0\}$, i.e.\ on which sheet of the
Riemann surface of $z^s$ the path traverses.
\end{remark}

\part{Deterministic extensions}

\section{Multidimensional Riemann integral}

We proceed by induction, starting from the two-dimensional case.

\subsection{The bidimensional case}

Let $Q=[a,b]\times[c,d]$ and consider the monomial $x^ny^m$. Given a
partition of $Q$,
\[
\{x_0=a<\cdots<x_N=b\}\times\{y_0=c<\cdots<y_M=d\},
\]
and $0\le h\le n$, $0\le k\le m$, form
\[
S_{h,k}=\sum_{i=0}^{N-1}\sum_{j=0}^{M-1}
x_i^{n-h}x_{i+1}^hy_j^{m-k}y_{j+1}^k(x_{i+1}-x_i)(y_{j+1}-y_j).
\]
Applying the one-dimensional argument in each variable separately,
\[
S_{h,k}\longrightarrow\frac{b^{n+1}-a^{n+1}}{n+1}\,\frac{d^{m+1}-c^{m+1}}{m+1}
=:\int_Qx^ny^m\,dx\,dy.
\]
With $F(x,y)=\dfrac{x^{n+1}}{n+1}\dfrac{y^{m+1}}{m+1}$ this reads
\[
\int_Qx^ny^m\,dx\,dy=\int_Q\frac{\partial^2F}{\partial x\,\partial y}\,dx\,dy
=F(b,d)+F(a,c)-F(a,d)-F(b,c),
\]
and, by linearity, for every polynomial $P(x,y)$,
\[
\int_Q\frac{\partial^2P}{\partial x\,\partial y}\,dx\,dy
=P(b,d)+P(a,c)-P(a,d)-P(b,c).
\]
This extends to a finite union of rectangles $A$:
\[
\int_A\frac{\partial^2P}{\partial x\,\partial y}\,dx\,dy
=\sum_{\alpha\in\partial A}(-1)^{\operatorname{sign}\alpha}P(\alpha)
=\oint_{\partial A}\frac{\partial P}{\partial x}\,dx
=-\oint_{\partial A}\frac{\partial P}{\partial y}\,dy,
\]
and, writing $\tilde P=\partial P/\partial x$ or $\tilde P=\partial P/\partial y$ respectively,
\[
\int_A\frac{\partial\tilde P}{\partial y}\,dx\,dy=\oint_{\partial A}\tilde P\,dx,
\qquad
\int_A\frac{\partial\tilde P}{\partial x}\,dx\,dy=-\oint_{\partial A}\tilde P\,dy.
\]

\subsubsection*{Extension to general domains}

Throughout this part we assume $a,b\colon[c,d]\to\R$ are continuous, with
$a(y)<b(y)$ for every $y\in[c,d]$; no further regularity (bounded
variation, Lipschitz, or otherwise) is imposed, as the argument below
will show none is needed. Consider domains of the type
\[
D=\{(x,y)\mid a(y)\le x\le b(y)\}\ \subset\subset\ [\min_ya(y),\max_yb(y)]\times[c,d].
\]
Let $F(x,y)=x^my^n$, $G(x,y)=\dfrac{x^{m+1}}{m+1}\dfrac{y^{n+1}}{n+1}$, and
for a rectangle $\mathcal R(x_1,y_1;x_2,y_2)=[x_1,x_2]\times[y_1,y_2]$ the
previous result gives
\[
I\big(\mathcal R(x_1,y_1;x_2,y_2),F\big)=G(x_1,y_1)+G(x_2,y_2)-G(x_2,y_1)-G(x_1,y_2).
\]
Given a partition $c=y_0<y_1<\cdots<y_N=d$ of $[c,d]$, summing over the
rectangles $\mathcal R(a(y_k),y_k;b(y_k),y_{k+1})$ and using
\[
G(a(y_k),y_k)-G(b(y_k),y_k)=-\Big[\frac{b(y_k)^{m+1}}{m+1}-\frac{a(y_k)^{m+1}}{m+1}\Big]\frac{y_k^{n+1}}{n+1},
\]
\[
G(b(y_k),y_{k+1})-G(a(y_k),y_{k+1})=\Big[\frac{b(y_k)^{m+1}}{m+1}-\frac{a(y_k)^{m+1}}{m+1}\Big]\frac{y_{k+1}^{n+1}}{n+1},
\]
we obtain
\begin{align*}
\sum_k I\big(\mathcal R(a(y_k),y_k;b(y_k),y_{k+1}),F\big)
&=\sum_k\Big[\frac{b(y_k)^{m+1}}{m+1}-\frac{a(y_k)^{m+1}}{m+1}\Big]\\
&\qquad\Big[\frac{y_{k+1}^{n+1}}{n+1}-\frac{y_k^{n+1}}{n+1}\Big],
\end{align*}
which, as $\delta(\pi)\to0$, converges to
\[
\int_c^d\Big[\frac{b(y)^{m+1}}{m+1}-\frac{a(y)^{m+1}}{m+1}\Big]y^n\,dy.
\]
To see this, set $\phi(y):=\dfrac{b(y)^{m+1}-a(y)^{m+1}}{m+1}$ (continuous
on $[c,d]$, since $a,b$ are) and $\psi(y):=\dfrac{y^{n+1}}{n+1}$, so the
sum above is $\sum_k\phi(y_k)\big[\psi(y_{k+1})-\psi(y_k)\big]$. By the
mean value theorem, $\psi(y_{k+1})-\psi(y_k)=\eta_k^n(y_{k+1}-y_k)$ for
some $\eta_k\in(y_k,y_{k+1})$, so
\begin{align*}
\sum_k\phi(y_k)\big[\psi(y_{k+1})-\psi(y_k)\big]&-\sum_k\phi(y_k)\,y_k^n(y_{k+1}-y_k)\\
&=\sum_k\phi(y_k)\big[\eta_k^n-y_k^n\big](y_{k+1}-y_k),
\end{align*}
which is bounded in modulus by
$(d-c)\,\|\phi\|_\infty\sup_k|\eta_k^n-y_k^n|\to0$ as $\delta(\pi)\to0$,
since $|\eta_k-y_k|\le\delta(\pi)$ and $y\mapsto y^n$ is uniformly
continuous on the compact $[c,d]$. The remaining sum
$\sum_k\phi(y_k)y_k^n(y_{k+1}-y_k)$ is an ordinary Riemann sum of the
continuous function $\phi(y)y^n$, so it converges, by
Section~\ref{sec:cont-riemann}, to $\int_c^d\phi(y)y^n\,dy$ as
$\delta(\pi)\to0$ --- proving the claimed limit, using only the
continuity of $a,b$.
Hence, for $P(x,y)=\sum_ic_i\,\dfrac{x^{m_i+1}}{m_i+1}\,y^{n_i}$,
\[
\oint_{\partial D}P(x,y)\,dy=\int_D\frac{\partial P(x,y)}{\partial x}\,dx\,dy.
\]

To extend this equality to a general $C^1$ function $f$, uniform
approximation of $f$ by polynomials is not enough on its own: uniform
convergence $P_n\to f$ does \emph{not} imply $\partial_xP_n\to\partial_xf$
(a classical warning --- e.g.\ Weierstrass-type examples show a uniformly
convergent sequence of smooth functions whose derivatives fail to
converge at all). What is needed is \emph{simultaneous} approximation of
$f$ together with its first partial derivatives.

\begin{lemma}[Simultaneous $C^1$-approximation, one variable]\label{lem:c1-approx-1d}
Let $g\in C^1[0,1]$. The Bernstein polynomials
$B_ng(x):=\sum_{k=0}^ng(k/n)\binom nkx^k(1-x)^{n-k}$ satisfy
$B_ng\to g$ and $(B_ng)'\to g'$, both uniformly on $[0,1]$.
\end{lemma}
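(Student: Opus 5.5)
The plan is to derive both uniform convergences from the classical facts that $B_n\phi\to\phi$ uniformly for every continuous $\phi$, and that $\|B_n\phi\|_\infty\le\|\phi\|_\infty$. The first fact is used in Section~\ref{sec:cont-riemann} for the Bernstein polynomials there; on $[0,1]$ it is the same statement. Applied to $g$, it gives $B_ng\to g$ immediately, so only the derivative statement needs work.

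For the derivative, I will first establish the standard identity
\[
(B_ng)'(x)=n\sum_{k=0}^{n-1}\Big[g\big(\tfrac{k+1}n\big)-g\big(\tfrac kn\big)\Big]\binom{n-1}{k}x^k(1-x)^{n-1-k}.
\]
It follows by differentiating $x^k(1-x)^{n-k}$ term by term, using $k\binom nk=n\binom{n-1}{k-1}$ and $(n-k)\binom nk=n\binom{n-1}{k}$, and reindexing so that the two resulting sums combine into differences. Next, by the mean value theorem, $n[g(\frac{k+1}n)-g(\frac kn)]=g'(\xi_{n,k})$ for some $\xi_{n,k}\in(\frac kn,\frac{k+1}n)$. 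Hence
\[
(B_ng)'(x)=\sum_{k=0}^{n-1}g'(\xi_{n,k})\,b_{n-1,k}(x),
\qquad b_{n-1,k}(x):=\binom{n-1}{k}x^k(1-x)^{n-1-k}.
\]

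I then compare this with $B_{n-1}(g')(x)=\sum_k g'(\frac k{n-1})b_{n-1,k}(x)$. Both $\xi_{n,k}$ and $\frac k{n-1}$ lie in $[\frac kn,\frac{k+1}n]$, since $\frac kn\le\frac k{n-1}\le\frac{k+1}n$ for $0\le k\le n-1$. Their distance is therefore at most $1/n$. Since the $b_{n-1,k}\ge0$ sum to $1$,
\[
\big|(B_ng)'-B_{n-1}(g')\big|\le\omega_{g'}(1/n)\longrightarrow0,
\]
where $\omega_{g'}$ is the modulus of continuity of $g'$. This modulus tends to $0$ because $g'$ is uniformly continuous on the compact $[0,1]$. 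Applying the first fact to the continuous function $g'$ gives $B_{n-1}(g')\to g'$ uniformly, and the triangle inequality concludes.

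The main obstacle is the derivative identity: it is an index-shifting computation, and the endpoint terms $k=0$ and $k=n$ must be handled carefully. Once it is in hand, the rest is the mean value theorem plus uniform continuity. One point deserves explicit mention. The paper invokes Bernstein convergence for continuous functions without proof. If a self-contained argument is wanted, I would add the usual proof via the moments $\sum_k(\frac kn-x)^2b_{n,k}(x)=x(1-x)/n$ together with uniform continuity.
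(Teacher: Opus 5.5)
Your proposal is correct and follows the paper's proof step for step: the same derivative identity, the mean value theorem, the partition-of-unity bound by $\omega_{g'}(1/n)$, and Bernstein--Weierstrass applied to $g'$. Your comparison with the genuine $B_{n-1}(g')$ is slightly more careful than the paper's. You use nodes $\frac{k}{n-1}\in[\frac kn,\frac{k+1}n]$, whereas the paper compares with the sum at nodes $\frac kn$ and then calls that sum $B_{n-1}(g')$ without accounting for the node shift.
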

\begin{proof}
$B_ng\to g$ uniformly is the classical Bernstein--Weierstrass theorem.
For the derivative, differentiating the explicit formula gives the
standard identity
\[
(B_ng)'(x)=n\sum_{k=0}^{n-1}\Big[g\big(\tfrac{k+1}n\big)-g\big(\tfrac kn\big)\Big]\binom{n-1}k x^k(1-x)^{n-1-k}.
\]
By the mean value theorem, $g(\tfrac{k+1}n)-g(\tfrac kn)=\tfrac1ng'(\xi_k)$
for some $\xi_k\in(\tfrac kn,\tfrac{k+1}n)$, so
\[
(B_ng)'(x)=\sum_{k=0}^{n-1}g'(\xi_k)\binom{n-1}kx^k(1-x)^{n-1-k}.
\]
Since the Bernstein basis functions $\binom{n-1}kx^k(1-x)^{n-1-k}$ sum to
$1$ (a partition of unity) and $|\xi_k-\tfrac kn|\le\tfrac1n$,
\[
\Big|(B_ng)'(x)-\sum_{k=0}^{n-1}g'\big(\tfrac kn\big)\binom{n-1}kx^k(1-x)^{n-1-k}\Big|
\le\max_k\Big|g'(\xi_k)-g'\big(\tfrac kn\big)\Big|
\]
\[
\le\omega_{g'}\big(\tfrac1n\big),
\]
where $\omega_{g'}$ is the modulus of continuity of $g'$ (continuous,
hence uniformly continuous, on $[0,1]$); the right-hand side $\to0$ as
$n\to\infty$. The remaining sum is $B_{n-1}(g')(x)$, which $\to g'(x)$
uniformly by Bernstein--Weierstrass applied to the continuous function
$g'$. Hence $(B_ng)'\to g'$ uniformly.
\end{proof}

\begin{lemma}[Simultaneous $C^1$-approximation, two variables]\label{lem:c1-approx-2d}
Let $R=[0,1]^2$ and $f\in C^1(R)$ (i.e.\ $f,f_x,f_y$ continuous on $R$).
The tensor Bernstein polynomials
\[
B_nf(x,y):=\sum_{j=0}^n\sum_{k=0}^nf\big(\tfrac jn,\tfrac kn\big)
\binom nj x^j(1-x)^{n-j}\binom nky^k(1-y)^{n-k}
\]
satisfy $B_nf\to f$, $\partial_xB_nf\to f_x$, and $\partial_yB_nf\to f_y$,
all uniformly on $R$.
\end{lemma}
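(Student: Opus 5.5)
We reduce everything to the one-variable mechanism of Lemma~\ref{lem:c1-approx-1d}, exploiting the tensor structure. Write $b_{n,j}(x)=\binom nj x^j(1-x)^{n-j}$, so that $B_nf(x,y)=\sum_{j,k}f(\tfrac jn,\tfrac kn)b_{n,j}(x)b_{n,k}(y)$. The plan has three steps.

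\emph{Step 1: uniform convergence of $B_nf$.} We use that $\sum_j b_{n,j}\equiv1$ and that $\sum_jb_{n,j}(x)|x-\tfrac jn|\le(\sum_jb_{n,j}(x)(x-\tfrac jn)^2)^{1/2}\le 1/(2\sqrt n)$. Hence $|B_nf(x,y)-f(x,y)|\le\sum_{j,k}b_{n,j}(x)b_{n,k}(y)|f(\tfrac jn,\tfrac kn)-f(x,y)|$. Since $f\in C^1(R)$ is Lipschitz on the convex set $R$, with constant $L=\|f_x\|_\infty+\|f_y\|_\infty$, this is at most $L/\sqrt n$. Alternatively one can split with the modulus of continuity, as in the classical proof; either way the estimate is uniform on $R$.

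\emph{Step 2: the partial derivative $\partial_xB_nf$.} Differentiating in $x$ only, the same computation as in Lemma~\ref{lem:c1-approx-1d} gives
\[
\partial_xB_nf(x,y)=\sum_{j=0}^{n-1}\sum_{k=0}^n n\Big[f\big(\tfrac{j+1}n,\tfrac kn\big)-f\big(\tfrac jn,\tfrac kn\big)\Big]b_{n-1,j}(x)\,b_{n,k}(y).
\]
By the mean value theorem in $x$ with $y=k/n$ fixed, the bracket times $n$ equals $f_x(\xi_{jk},\tfrac kn)$ with $|\xi_{jk}-\tfrac jn|\le\tfrac1n$. Thus $\partial_xB_nf$ is a mixed Bernstein-type average of $f_x$, evaluated at points within distance $1/n$ of the grid point $(\tfrac jn,\tfrac kn)$, with weights $b_{n-1,j}(x)b_{n,k}(y)$ summing to $1$. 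Replacing $f_x(\xi_{jk},\tfrac kn)$ by $f_x(\tfrac jn,\tfrac kn)$ costs at most $\omega_{f_x}(1/n)$, where $\omega_{f_x}$ is the modulus of continuity of $f_x$ on the compact $R$.

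It then remains to show that
\[
T_n(x,y):=\sum_{j,k}f_x\big(\tfrac jn,\tfrac kn\big)b_{n-1,j}(x)b_{n,k}(y)\to f_x(x,y)
\]
uniformly. This is where the degree mismatch ($n-1$ in $x$, $n$ in $y$) and the non-matching nodes $j/n$ versus $j/(n-1)$ appear, and we regard it as the main obstacle, though a mild one. We handle it directly, by the same argument as in Step~1 applied to the merely continuous function $g=f_x$. Write
\[
|T_n-g|\le\sum_{j,k}b_{n-1,j}(x)b_{n,k}(y)\big|g(\tfrac jn,\tfrac kn)-g(x,y)\big|.
\]
Fix $\eps>0$ and choose $\eta$ so that $|g(p)-g(q)|<\eps$ whenever $|p-q|<\eta$. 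Split the sum according to whether $|\tfrac jn-x|+|\tfrac kn-y|<\eta$ or not. The near terms contribute less than $\eps$. The far terms contribute at most $2\|g\|_\infty$ times the Bernstein mass of the set where $|\tfrac jn-x|\ge\eta/2$ or $|\tfrac kn-y|\ge\eta/2$. Since $|\tfrac jn-\tfrac j{n-1}|\le\tfrac1{n-1}$, Chebyshev's inequality with the variance bound $\sum_j b_{m,j}(x)(x-\tfrac jm)^2\le\tfrac1{4m}$ makes this mass $O(1/(n\eta^2))$, uniformly in $(x,y)$. Hence $\limsup_n\|T_n-g\|_\infty\le\eps$, and since $\eps$ is arbitrary, $T_n\to f_x$ uniformly.

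\emph{Step 3: the partial derivative $\partial_yB_nf$.} This follows from Step~2 by the symmetry $x\leftrightarrow y$ of the construction, which finishes the proof.
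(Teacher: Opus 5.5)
Your proof is correct and follows the same route as the paper. You differentiate in $x$ only, use the mean value theorem to write $\partial_xB_nf$ as an average of $f_x(\xi_{j,k},\tfrac kn)$ against weights $b_{n-1,j}(x)b_{n,k}(y)$ summing to $1$, pay $\omega_{f_x}(1/n)$ to move to grid values, conclude by a Bernstein-type convergence for the continuous $f_x$, and get $\partial_y$ by symmetry. The only difference is in detail: the paper calls the main term ``the tensor Bernstein polynomial of $f_x$'' and cites two-dimensional Bernstein--Weierstrass, passing over the fact that its nodes are $\tfrac jn$ rather than $\tfrac j{n-1}$, while your Chebyshev argument handles that node shift explicitly (and your Lipschitz bound $L/\sqrt n$ makes $B_nf\to f$ quantitative where the paper simply cites the theorem).
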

\begin{proof}
$B_nf\to f$ uniformly is the two-dimensional Bernstein--Weierstrass
theorem (same law-of-large-numbers argument as in one variable, applied
to the pair of independent Binomial$(n,x)$, Binomial$(n,y)$ counts). For
$\partial_xB_nf$, differentiating in $x$ only and applying the
one-variable identity used in Lemma~\ref{lem:c1-approx-1d} (for each
fixed $k$) gives
\[
\partial_xB_nf(x,y)=\sum_{j=0}^{n-1}\sum_{k=0}^nf_x(\xi_{j,k},\tfrac kn)
\binom{n-1}jx^j(1-x)^{n-1-j}\binom nky^k(1-y)^{n-k}
\]
for some $\xi_{j,k}\in(\tfrac jn,\tfrac{j+1}n)$, by the mean value theorem
applied to $x\mapsto f(x,\tfrac kn)$. Since $f_x$ is continuous on the
compact $R$, hence uniformly continuous, and $|\xi_{j,k}-\tfrac
jn|\le\tfrac1n$, replacing $f_x(\xi_{j,k},\tfrac kn)$ by
$f_x(\tfrac jn,\tfrac kn)$ changes the sum by at most
$\omega_{f_x}(\tfrac1n)\to0$ uniformly, using again that the basis
functions sum to $1$. The resulting main term is the tensor Bernstein
polynomial of $f_x$ (of degree $n-1$ in $x$, $n$ in $y$), which $\to f_x$
uniformly by the two-dimensional Bernstein--Weierstrass theorem applied
to the continuous function $f_x$. Hence $\partial_xB_nf\to f_x$
uniformly; symmetrically $\partial_yB_nf\to f_y$ uniformly.
\end{proof}

By an affine change of variables, Lemma~\ref{lem:c1-approx-2d} holds with
$R$ replaced by any compact rectangle; applying it on a rectangle
containing $\overline D$ on which $f$ is $C^1$ gives polynomials
$P_n\to f$, $\partial_xP_n\to\partial_xf$, uniformly on $\overline D$.
Recalling that $\oint_{\partial D}F\,dy$ means, throughout this
construction, $\int_c^d\big[F(b(y),y)-F(a(y),y)\big]\,dy$ --- an ordinary
one-dimensional integral in $y$, requiring no rectifiability of
$\partial D$ --- we get
\begin{align*}
\Big|\oint_{\partial D}P_n\,dy-\oint_{\partial D}f\,dy\Big|
&\le\int_c^d\big(|P_n-f|(b(y),y)+|P_n-f|(a(y),y)\big)\,dy\\
&\le2(d-c)\,\|P_n-f\|_\infty\longrightarrow0,
\end{align*}
using only uniform convergence $P_n\to f$ on $\overline D$; similarly
$\int_D\partial_xP_n\,dx\,dy\to\int_D\partial_xf\,dx\,dy$ from the uniform
convergence of $\partial_xP_n$ on the bounded region $D$. Hence the
identity passes to the limit:
\[
\oint_{\partial D}f(x,y)\,dy=\int_D\frac{\partial f(x,y)}{\partial x}\,dx\,dy
\]
for every $f\in C^1$ (on a neighbourhood of $\overline D$).

The extension to the three-dimensional case (and, by induction, to any
dimension) follows the same two steps --- an explicit polynomial identity
via antiderivatives, then simultaneous $C^1$-approximation via
tensor Bernstein polynomials in the extra variables --- but we do not
spell out the bookkeeping here.

\section{The Riemann--Stieltjes case}

Let $f\colon[a,b]\to\C$ be a bounded function such that
\begin{equation}\label{eq:e2}
\sum_{i=0}^{n-1}|f(t_{i+1})-f(t_i)|^2\longrightarrow0
\qquad\text{as }\delta(\pi)\to0.
\end{equation}

\begin{remark}
Condition \eqref{eq:e2} implies the continuity of $f$.
\end{remark}

Exactly as in Section~\ref{sec:real-key}, with $\alpha_i=f(t_i)$ the
telescoping identity $(m+1)I_0(\pi)+\sum_{k=1}^m\rho_k(\pi)=f(b)^{m+1}-f(a)^{m+1}$
holds, and now
\[
\rho_k(\pi)=\sum_{i=0}^{n-1}f(t_i)^{m-k}\big(f(t_{i+1})^{k-1}+\cdots+f(t_i)^{k-1}\big)\big(f(t_{i+1})-f(t_i)\big)^2,
\]
so that condition \eqref{eq:e2} controls the remainders
$\rho_k(\pi)$ (each factor in the bracket is bounded, since $f$ is
bounded), and we get the existence of
\[
\sum_{i=0}^{n-1}f(t_i)^m\big(f(t_{i+1})-f(t_i)\big)
\longrightarrow\frac{f(b)^{m+1}-f(a)^{m+1}}{m+1}
\qquad\text{as }\delta(\pi)\to0.
\]
In other words, the Riemann--Stieltjes integral exists and
\[
\int_a^bf(t)^m\,df(t)=\frac{f(b)^{m+1}-f(a)^{m+1}}{m+1}.
\]
This applies, for instance, whenever $f$ is continuous and of bounded
variation, or whenever $f$ is $\theta$-H\"older continuous with
$\theta>1/2$.

If $f,g$ are two such functions with $f(a)=g(a')$, $f(b)=g(b')$, then
clearly
\[
\int_a^bf(t)^m\,df(t)=\int_{a'}^{b'}g(t)^m\,dg(t)
=\frac{f(b)^{m+1}-f(a)^{m+1}}{m+1}.
\]
If, moreover, $f$ is continuously differentiable,
\[
\int_a^bf(t)^m\,f'(t)\,dt=\frac{f(b)^{m+1}-f(a)^{m+1}}{m+1}.
\]
By linearity, for any polynomial $P$,
\[
\int_a^bP'(f(t))\,df(t)=P(f(b))-P(f(a)),
\]
and, by a further (uniform) polynomial-approximation argument, for any
holomorphic function $\phi$ defined on a neighborhood of the (compact)
range of $f$,
\[
\int_a^b\phi'(f(t))\,df(t)=\phi(f(b))-\phi(f(a)).
\]
(A merely $C^1$ function $\phi\colon\C\to\C$, in the real sense, need not
be a uniform limit of polynomials in $z$: e.g.\ $\phi(z)=\bar z$ is
$C^\infty$ but not holomorphic anywhere, and a locally uniform limit of
polynomials is holomorphic. Holomorphy of $\phi$ is what makes the
approximation argument -- and hence this extension -- valid.)

\subsection{General integrands: $\varphi$ continuous, $g$ of bounded variation}
\label{sec:rs-bv}

Everything so far integrates $f$ (or a polynomial in $f$) against its
\emph{own} increments $df$ -- the telescoping trick relies on the
integrand being algebraically tied to $f$. We now remove that tie: let
$g\colon[a,b]\to\R$ be of bounded variation, with total variation $V(g)$,
and let $\varphi\colon[a,b]\to\C$ be continuous, unrelated to $g$. We
construct $\int_a^b\varphi(t)\,dg(t)$ directly, by the same
polynomial-first strategy as Section~\ref{sec:cont-riemann}.

\begin{lemma}\label{lem:rs-poly}
Let $P$ be a polynomial. For $\pi\in{\cal P}(a,b)$ with tags
$\tau_i\in[t_i,t_{i+1}]$, the Riemann--Stieltjes sums
\[
S(\pi):=\sum_{i=0}^{N-1}P(\tau_i)\big(g(t_{i+1})-g(t_i)\big)
\]
converge, as $\delta(\pi)\to0$, to
\[
\int_a^bP(t)\,dg(t):=P(b)g(b)-P(a)g(a)-\int_a^bP'(t)g(t)\,dt,
\]
the last integral being the ordinary Riemann integral of the continuous
function $g\cdot P'$ (Section~\ref{sec:cont-riemann}).
\end{lemma}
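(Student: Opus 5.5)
The plan is summation by parts (Abel summation) on the Riemann--Stieltjes sum, which moves the increments from $g$ onto $P$ and turns the sum into a Riemann-type sum for $gP'$. The convergence then comes from Section~\ref{sec:cont-riemann} together with the tag-replacement estimate of Lemma~\ref{lem:real2}.

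First I reduce to left-endpoint tags $\tau_i=t_i$. Since $P'$ is bounded on $[a,b]$, the mean value theorem gives $|P(\tau_i)-P(t_i)|\le\|P'\|_\infty\,\delta(\pi)$. Hence
\[
\Big|S(\pi)-\sum_iP(t_i)\big(g(t_{i+1})-g(t_i)\big)\Big|\le\|P'\|_\infty\,\delta(\pi)\,V(g)\longrightarrow0 .
\]
This is the only place bounded variation is used.

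Next I apply Abel summation to the left-endpoint sum:
\[
\sum_{i=0}^{N-1}P(t_i)\big(g(t_{i+1})-g(t_i)\big)=P(b)g(b)-P(a)g(a)-\sum_{i=0}^{N-1}g(t_{i+1})\big(P(t_{i+1})-P(t_i)\big).
\]
It remains to show that the last sum tends to $\int_a^bP'(t)g(t)\,dt$. By the mean value theorem (for real-coefficient $P$; complex $P$ is handled by splitting into real and imaginary parts), $P(t_{i+1})-P(t_i)=P'(\eta_i)(t_{i+1}-t_i)$ with $\eta_i\in(t_i,t_{i+1})$. So the sum equals $\sum_ig(t_{i+1})P'(\eta_i)(t_{i+1}-t_i)$.

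This expression is almost a Riemann sum of $gP'$, except that $g$ and $P'$ are evaluated at different points. Replacing $P'(\eta_i)$ by $P'(t_{i+1})$ costs at most $\|g\|_\infty\,\|P''\|_\infty\,\delta(\pi)(b-a)$. This is exactly the Lemma~\ref{lem:real2}-type estimate, and it uses only that $g$ is bounded, which holds because $g$ has bounded variation. What remains, $\sum_i g(t_{i+1})P'(t_{i+1})(t_{i+1}-t_i)$, is a right-tagged Riemann sum of $gP'$.

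The main obstacle is justifying convergence of these Riemann sums. The paper's Section~\ref{sec:cont-riemann} only treats continuous integrands, but a function of bounded variation need not be continuous, and the lemma calls $gP'$ continuous. If $g$ is assumed continuous, as that wording suggests, I invoke Section~\ref{sec:cont-riemann} directly, noting that it also covers arbitrary tags. Otherwise, I would use the standard fact that a bounded-variation $g$ is a difference of two monotone functions, so $gP'$ is Riemann integrable. The argument is an upper-minus-lower sum estimate: on each subinterval the oscillation of $gP'$ is bounded by $\|P'\|_\infty\,\mathrm{osc}(g)+\|g\|_\infty\,\|P''\|_\infty\,\delta(\pi)$. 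The total oscillation of a monotone function summed against interval lengths is at most $V(g)\,\delta(\pi)$, so the difference of upper and lower sums tends to $0$.
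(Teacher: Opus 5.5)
Your proof is correct and takes essentially the paper's route: reduce to left tags using $V(g)<\infty$, apply summation by parts, and use the mean value theorem to turn the remaining sum into a Riemann sum for $gP'$. Your product-rule form of summation by parts also avoids the paper's boundary term $P(t_{N-1})g(b)$, and your extra care with a possibly discontinuous $g$ is well placed. The paper simply calls $gP'$ continuous although only bounded variation is assumed, and your oscillation estimate $\sum_i\mathrm{osc}_{[t_i,t_{i+1}]}(g)\,(t_{i+1}-t_i)\le V(g)\,\delta(\pi)$ supplies the Riemann integrability needed in that case.
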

\begin{proof}
Since $P$ is a polynomial, $\|P'\|_\infty<\infty$ on $[a,b]$, so
$|P(\tau_i)-P(t_i)|\le\|P'\|_\infty\,\delta(\pi)$ and, since $g$ is
bounded (being of bounded variation),
\[
\Big|S(\pi)-\sum_{i=0}^{N-1}P(t_i)\big(g(t_{i+1})-g(t_i)\big)\Big|
\le\|P'\|_\infty\,\delta(\pi)\sum_{i=0}^{N-1}\big|g(t_{i+1})-g(t_i)\big|
\]
\[
\le\|P'\|_\infty\,V(g)\,\delta(\pi)\longrightarrow0.
\]
It therefore suffices to treat tags at left endpoints,
$S_0(\pi):=\sum_{i=0}^{N-1}P(t_i)\big(g(t_{i+1})-g(t_i)\big)$. By Abel
summation (summation by parts),
\[
S_0(\pi)=P(t_{N-1})g(b)-P(a)g(a)-\sum_{i=1}^{N-1}\big[P(t_i)-P(t_{i-1})\big]g(t_i).
\]
Since $|P(b)-P(t_{N-1})|\le\|P'\|_\infty\delta(\pi)$ and $g$ is bounded,
$P(t_{N-1})g(b)\to P(b)g(b)$ as $\delta(\pi)\to0$. By the mean value
theorem, $P(t_i)-P(t_{i-1})=P'(\eta_i)(t_i-t_{i-1})$ for some
$\eta_i\in(t_{i-1},t_i)$, so
\[
\sum_{i=1}^{N-1}\big[P(t_i)-P(t_{i-1})\big]g(t_i)
=\sum_{i=1}^{N-1}P'(\eta_i)g(t_i)(t_i-t_{i-1}),
\]
which is a Riemann-type sum for the continuous function $P'(t)g(t)$, with
tags $\eta_i$ within $\delta(\pi)$ of the node $t_i$ carrying the function
value $g(t_i)$: exactly as in Lemma~\ref{lem:real2}, this converges to
$\int_a^bP'(t)g(t)\,dt$ as $\delta(\pi)\to0$. Combining the three limits
gives the claim.
\end{proof}

\begin{thm}\label{thm:rs-continuous}
Let $g$ be of bounded variation on $[a,b]$ and $\varphi\colon[a,b]\to\C$
continuous. Then the Riemann--Stieltjes sums
$\sum_i\varphi(\tau_i)\big(g(t_{i+1})-g(t_i)\big)$ converge, as
$\delta(\pi)\to0$, to a limit $\int_a^b\varphi(t)\,dg(t)$, independent of
the tags $\tau_i$.
\end{thm}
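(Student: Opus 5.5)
We follow the polynomial-first strategy of Section~\ref{sec:cont-riemann}, now with Lemma~\ref{lem:rs-poly} in place of Lemma~\ref{lem:real1}. The key observation is that Riemann--Stieltjes sums against a function of bounded variation are controlled in sup norm:
\[
\Big|\sum_i\psi(\tau_i)\big(g(t_{i+1})-g(t_i)\big)\Big|\le\|\psi\|_\infty V(g)
\]
for every bounded $\psi$, every partition and every choice of tags. This bound plays the role that $(b-a)\|\cdot\|_\infty$ played for the ordinary Riemann integral.

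\emph{Step 1: approximate $\varphi$ and get a candidate limit.} Let $P_n:=P_n^\varphi$ be the Bernstein polynomials of $\varphi$ on $[a,b]$, applied to real and imaginary parts separately. Then $\|\varphi-P_n\|_\infty\to0$. Set
\[
w_n:=\int_a^bP_n(t)\,dg(t),
\]
which exists by Lemma~\ref{lem:rs-poly}. The sup-norm bound above, applied to $\psi=P_{n+p}-P_n$ on any single partition, gives
\[
\Big|\sum_i(P_{n+p}-P_n)(\tau_i)\,\Delta_ig\Big|\le\|P_{n+p}-P_n\|_\infty V(g).
\]
Letting $\delta(\pi)\to0$ and using Lemma~\ref{lem:rs-poly} for both polynomials yields
\[
|w_{n+p}-w_n|\le\|P_{n+p}-P_n\|_\infty V(g).
\]
So $(w_n)$ is Cauchy and converges to some $w\in\C$. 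This is where completeness of $\C$ enters, as in the Remark following Lemma~\ref{lem:real2}.

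\emph{Step 2: the $\varepsilon/3$ argument.} For any tagged $\pi$, split the error as
\[
\Big|\sum_i\varphi(\tau_i)\Delta_ig-w\Big|\le\|\varphi-P_n\|_\infty V(g)+\Big|\sum_iP_n(\tau_i)\Delta_ig-w_n\Big|+|w_n-w|.
\]
Take $\limsup_{\delta(\pi)\to0}$ first. The middle term vanishes by Lemma~\ref{lem:rs-poly}, which is uniform in the tags since it allows arbitrary $\tau_i\in[t_i,t_{i+1}]$. Then let $n\to\infty$, and the remaining two terms vanish. Hence the net converges to $w=:\int_a^b\varphi\,dg$. Tag-independence follows because the estimates hold for every choice of tags and the limit $w$ is defined without reference to any tags.

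\emph{Step 3: the only delicate point.} The Cauchy estimate in Step~1 requires passing a bound on individual sums to the limits $w_n$. This works because $V(g)$ bounds $\sum_i|\Delta_ig|$ uniformly over all partitions. Note that no continuity of $g$ is needed anywhere; this contrasts with condition~\eqref{eq:e2}, which was needed earlier. I expect no real obstacle beyond checking that Lemma~\ref{lem:rs-poly} does not itself require continuity of $g$. It does not: the lemma uses $g$ only through boundedness, $V(g)$, and the Riemann integrability of $gP'$. That last ingredient should be justified separately, since $g$ of bounded variation is a difference of monotone functions and is therefore Riemann integrable, rather than by appeal to Section~\ref{sec:cont-riemann}, which treats only continuous integrands. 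If one wants to avoid that justification, one can assume $g$ continuous at this point.
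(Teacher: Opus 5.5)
Your proof is correct and is essentially the paper's own argument. You approximate $\varphi$ uniformly by polynomials, use $|\sum_i\psi(\tau_i)\Delta_ig|\le\|\psi\|_\infty V(g)$ to make $\int_a^bP_n\,dg$ Cauchy, and conclude with the same $\limsup_{\delta(\pi)\to0}$-then-$n\to\infty$ estimate; using Bernstein rather than arbitrary Weierstrass approximants makes no difference.

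Your caveat about Lemma~\ref{lem:rs-poly} is well taken. The paper treats $g\cdot P'$ as a continuous function there, which bounded variation alone does not give, and your justification via the Riemann integrability of $g$ (a difference of monotone functions) is the right repair.
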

\begin{proof}
Let $P_n\to\varphi$ uniformly on $[a,b]$ (Weierstrass). For any tagged
partition $\pi$,
\begin{align*}
&\Big|\sum_i\varphi(\tau_i)\big(g(t_{i+1})-g(t_i)\big)
-\sum_iP_n(\tau_i)\big(g(t_{i+1})-g(t_i)\big)\Big|\\
&\qquad\le\|\varphi-P_n\|_\infty\sum_i\big|g(t_{i+1})-g(t_i)\big|
\le\|\varphi-P_n\|_\infty\,V(g).
\end{align*}
By Lemma~\ref{lem:rs-poly}, $\sum_iP_n(\tau_i)(g(t_{i+1})-g(t_i))\to
\int_a^bP_n\,dg=:v_n$ as $\delta(\pi)\to0$, and $|v_{n+p}-v_n|\le
\|P_{n+p}-P_n\|_\infty V(g)$, so $\{v_n\}$ converges to some $v$. Exactly
as in Section~\ref{sec:cont-riemann} (the passage from Bernstein polynomials to $\phi$),
\[
\limsup_{\delta(\pi)\to0}\Big|\sum_i\varphi(\tau_i)(g(t_{i+1})-g(t_i))-v\Big|
\le\|\varphi-P_n\|_\infty\,V(g)+|v_n-v|\xrightarrow[n\to\infty]{}0,
\]
so the Riemann--Stieltjes sums converge to $v=:\int_a^b\varphi\,dg$.
\end{proof}

\begin{remark}
This is precisely the missing ingredient for boundary integrals such as
$\oint_{\partial D}\varphi\,ds$ or $\oint_{\partial D}\varphi\,dx$ over a
rectifiable boundary $\partial D$: with $g$ a coordinate function or the
arc-length of a parametrization of $\partial D$ (of bounded variation
since $\partial D$ is rectifiable), Theorem~\ref{thm:rs-continuous} gives
the integral of an arbitrary continuous $\varphi$ along the boundary,
independently of the fine structure of the parametrization.
\end{remark}

\subsection{The H\"older case: Kondurar's theorem}
\label{sec:kondurar}

Theorem~\ref{thm:rs-continuous} requires $g$ of bounded variation --
$V(g)<\infty$. This excludes genuinely rough companions: a
$\beta$-H\"older function with $\beta<1$ need not have bounded variation
(the same phenomenon already met with the H\"older paths of
Remark~\ref{rem:holder}). Kondurar~\cite{K} proved, in 1937, that bounded
variation can be traded for a matching amount of H\"older regularity on
\emph{both} sides.

\begin{lemma}[Sewing Lemma~\cite{FdlP}]\label{lem:sewing}
Let $\mu\colon\Delta_2\to\C$, $\Delta_2:=\{(s,t):a\le s\le t\le b\}$, be a
function (a \emph{germ}) such that, for some $K\ge0$ and $\gamma>0$,
\[
|\delta\mu(s,u,t)|:=|\mu(s,t)-\mu(s,u)-\mu(u,t)|\le K(t-s)^{1+\gamma}
\]
\[
\text{for all }a\le s\le u\le t\le b.
\]
Then the Riemann-type sums $\sum_i\mu(t_i,t_{i+1})$ converge, as
$\delta(\pi)\to0$, to a limit $I(a,b)$, independent of the partition,
with the quantitative estimate
$|\mu(s,t)-I(s,t)|\le C(\gamma)K(t-s)^{1+\gamma}$
for every $s\le t$, where $I(s,t):=\lim_{\delta(\pi)\to0}\sum_i\mu(t_i,t_{i+1})$
over partitions of $[s,t]$.
\end{lemma}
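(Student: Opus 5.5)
The plan follows the classical dyadic-removal argument of Young and Gubinelli–Feyel–de la Pradelle. The only input is the defect bound $|\delta\mu(s,u,t)|\le K(t-s)^{1+\gamma}$. It plays the role that the quadratic remainder $\sum_i(\Delta_i)^2$ played in Lemma~\ref{lem:real1}.

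\emph{Step 1: a maximal inequality for a single partition.} Fix $s<t$ and a partition $\pi=\{s=t_0<\cdots<t_N=t\}$, and write $\mu_\pi:=\sum_i\mu(t_i,t_{i+1})$. We show
\[
|\mu_\pi-\mu(s,t)|\le C(\gamma)K(t-s)^{1+\gamma},\qquad C(\gamma)=2^{1+\gamma}\zeta(1+\gamma).
\]
For $N\ge2$, pigeonhole gives an interior point $t_j$ ($1\le j\le N-1$) with $t_{j+1}-t_{j-1}\le 2(t-s)/(N-1)$. Removing it changes $\mu_\pi$ by exactly $\delta\mu(t_{j-1},t_j,t_{j+1})$, which is at most $K\big(2(t-s)/(N-1)\big)^{1+\gamma}$ in modulus. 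Iterating down to the trivial partition $\{s,t\}$ and summing over $N-1,N-2,\dots,1$ yields the bound with the constant above, finite because $\gamma>0$. This is the step where $\gamma>0$ is indispensable. The estimate is uniform in $N$ and in the position of the points, and I expect it to be the heart of the matter.

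\emph{Step 2: the Cauchy property.} Let $\pi,\pi'$ be partitions of $[a,b]$ and $\sigma=\pi\cup\pi'$ their common refinement. Apply Step 1 on each interval $[t_i,t_{i+1}]$ of $\pi$, to the points of $\sigma$ falling in it, and sum:
\[
|\mu_\sigma-\mu_\pi|\le C(\gamma)K\sum_i(t_{i+1}-t_i)^{1+\gamma}\le C(\gamma)K(b-a)\,\delta(\pi)^\gamma .
\]
The same bound holds for $\pi'$, so $|\mu_\pi-\mu_{\pi'}|\le C(\gamma)K(b-a)\big(\delta(\pi)^\gamma+\delta(\pi')^\gamma\big)$. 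The net is therefore Cauchy as $\delta\to0$, and completeness of $\C$ gives a limit $I(a,b)$. Independence of the partition is automatic, since the bound involves only the meshes. The same argument on $[s,t]$ defines $I(s,t)$.

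\emph{Step 3: the quantitative estimate.} Letting $\delta(\pi)\to0$ in the inequality of Step 1, applied on $[s,t]$, gives $|\mu(s,t)-I(s,t)|\le C(\gamma)K(t-s)^{1+\gamma}$ directly.

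The main obstacle is Step 1. One has to see that greedy removal of the point with the smallest neighbouring gap produces a summable series $\sum_k k^{-(1+\gamma)}$. Everything after it is routine bookkeeping.
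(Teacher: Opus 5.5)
Your argument is correct. The pigeonhole bound $\sum_{j=1}^{N-1}(t_{j+1}-t_{j-1})\le2(t-s)$ holds, and each removal costs exactly one defect $\delta\mu(t_{j-1},t_j,t_{j+1})$. The resulting bound $2^{1+\gamma}\zeta(1+\gamma)K(t-s)^{1+\gamma}$ is uniform over all partitions of $[s,t]$, and Steps~2 and~3 follow from it as you say. One labelling slip: what you carry out is Young's greedy removal of the point with the smallest neighbouring gap, not a dyadic scheme; the dyadic proof is the Feyel--de la Pradelle one.

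The paper gives no proof of Lemma~\ref{lem:sewing}; it uses it as a black box. It does, however, sketch the dyadic route in Remark~\ref{rem:kondurar-elementary}: bisect a fixed interval, and control consecutive levels by the geometric series $\sum_k2^{-k\gamma}$. A separate step then passes from dyadic refinements to arbitrary partitions, and the paper singles this out as the real bookkeeping content of the Sewing Lemma.

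Your route moves the difficulty elsewhere. Because the maximal inequality holds for every partition at once, with no refinement structure, the passage to arbitrary partitions collapses to the common-refinement estimate of your Step~2. You get net convergence along $\delta(\pi)\to0$ with the explicit rate $C(\gamma)K(b-a)\,\delta(\pi)^\gamma$, without first showing that a dyadic limit is additive. Your argument also extends verbatim to a superadditive control $\omega(s,t)$ in place of $t-s$, since the pigeonhole step only uses $\sum_j\omega(t_{j-1},t_{j+1})\le2\omega(s,t)$; this is Young's $p$-variation setting.

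The dyadic route instead builds $I$ along one fixed refining sequence. It does not need the linear order of the nodes to locate a removable point, which is why it is the form usually adapted to multiparameter germs. Its constant $(1-2^{-\gamma})^{-1}$ and your $2^{1+\gamma}\zeta(1+\gamma)$ both blow up like $1/\gamma$, which is exactly where $\gamma>0$ is used in either proof.
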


We do not reprove Lemma~\ref{lem:sewing} here; we use it as a black box,
verifying its hypothesis for the specific germ arising from
Riemann--Stieltjes sums.

\begin{cor}[Young--Kondurar theorem, via the Sewing Lemma]\label{thm:kondurar}
Let $f,\varphi\colon[a,b]\to\C$ satisfy
\[
|f(u)-f(v)|\le M|u-v|^\alpha,\qquad |\varphi(u)-\varphi(v)|\le N|u-v|^\beta,
\qquad u,v\in[a,b],
\]
with $\alpha+\beta=1+\gamma$, $\gamma>0$. Then the Riemann--Stieltjes sums
\[
S(\pi):=\sum_{i=0}^{N-1}f(\tau_i)\big(\varphi(t_{i+1})-\varphi(t_i)\big)
\]
converge, as $\delta(\pi)\to0$, to a limit
$\int_a^bf(t)\,d\varphi(t)$, independent of the tags $\tau_i$.
\end{cor}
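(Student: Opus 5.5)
We apply the Sewing Lemma~\ref{lem:sewing} to the left-point germ
\[
\mu(s,t):=f(s)\big(\varphi(t)-\varphi(s)\big),\qquad (s,t)\in\Delta_2 .
\]
This reduces the corollary to checking one algebraic identity and then comparing arbitrary tags with left-point tags.

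\emph{Step 1: the coboundary.} For $s\le u\le t$, a direct expansion gives the telescoping identity
\[
\delta\mu(s,u,t)=f(s)\big(\varphi(t)-\varphi(s)\big)-f(s)\big(\varphi(u)-\varphi(s)\big)-f(u)\big(\varphi(t)-\varphi(u)\big)
=-\big(f(u)-f(s)\big)\big(\varphi(t)-\varphi(u)\big).
\]
This is the Stieltjes analogue of the quadratic remainder $(\Delta_i)^2$ used throughout the paper. The H\"older hypotheses then give
\[
|\delta\mu(s,u,t)|\le M N\,(u-s)^\alpha(t-u)^\beta\le MN\,(t-s)^{\alpha+\beta}=MN\,(t-s)^{1+\gamma}.
\]
So the Sewing Lemma applies with $K=MN$. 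Consequently the left-point sums $\sum_i\mu(t_i,t_{i+1})$ converge, as $\delta(\pi)\to0$, to a limit $I(a,b)$ independent of the partition, and we define $\int_a^b f\,d\varphi:=I(a,b)$.

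\emph{Step 2: arbitrary tags.} This is the only step that needs care, because a crude bound can fail. The naive estimate
\[
\sum_i|f(\tau_i)-f(t_i)|\,|\varphi(t_{i+1})-\varphi(t_i)|\le MN\sum_i(t_{i+1}-t_i)^{\alpha+\beta}
\]
does tend to $0$, since $\alpha+\beta>1$:
\[
\sum_i(t_{i+1}-t_i)^{1+\gamma}\le\delta(\pi)^\gamma\,(b-a)\longrightarrow0.
\]
Here is the possible pitfall. A tag-to-left-point comparison is delicate only if one of $\alpha,\beta$ exceeds $1$, or if one tries to bound each factor separately without exploiting the sum of exponents. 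Here both factors are controlled on the same interval $[t_i,t_{i+1}]$, since $|\tau_i-t_i|\le t_{i+1}-t_i$. So the product bound is legitimate and the tagged sums have the same limit $I(a,b)$.

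If the constants $M,N$ are only needed locally, nothing changes, since $[a,b]$ is compact. I expect no further obstacle: all the analytic depth lies in the Sewing Lemma, used as a black box, and the remaining work is the identity for $\delta\mu$ together with the exponent bookkeeping $\alpha+\beta=1+\gamma$. As a consistency check, when $f=\varphi^m$ the limit should agree with the value obtained earlier by the telescoping argument. This holds because the Sewing Lemma limit is unique and, by Step 2, coincides with the limit of the sums considered there.
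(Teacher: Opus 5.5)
Your proposal is correct and follows the paper's proof. You use the same left-point germ $\mu(s,t)=f(s)(\varphi(t)-\varphi(s))$ and the same coboundary $\delta\mu(s,u,t)=(f(s)-f(u))(\varphi(t)-\varphi(u))\le MN(t-s)^{1+\gamma}$, the Sewing Lemma with $K=MN$, and a reduction from arbitrary tags to left endpoints. Your cell-by-cell tag estimate $\sum_i M(t_{i+1}-t_i)^\alpha\,N(t_{i+1}-t_i)^\beta\le MN(b-a)\,\delta(\pi)^\gamma$ is more careful than the paper's appeal to Theorem~\ref{thm:rs-continuous} via the global bound $M\delta(\pi)^\alpha$: read literally, that route needs $\sum_i|\varphi(t_{i+1})-\varphi(t_i)|$ bounded, i.e.\ bounded variation, which is unavailable when $\beta<1$.
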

\begin{proof}
Apply Lemma~\ref{lem:sewing} to the germ $\mu(s,t):=f(s)(\varphi(t)-\varphi(s))$,
whose Riemann-type sums $\sum_i\mu(t_i,t_{i+1})=\sum_if(t_i)(\varphi(t_{i+1})-\varphi(t_i))$
are exactly the left-tagged case of $S(\pi)$ (the general tag $\tau_i$
in place of $t_i$ is handled exactly as in the proof of
Theorem~\ref{thm:rs-continuous}, via $|f(\tau_i)-f(t_i)|\le M\delta(\pi)^\alpha$,
so it suffices to treat $\tau_i=t_i$). For $a\le s\le u\le t\le b$, with
$\ell:=t-s$,
\[
\delta\mu(s,u,t)=\mu(s,t)-\mu(s,u)-\mu(u,t)
\]
\[
=f(s)(\varphi(t)-\varphi(u))-f(u)(\varphi(t)-\varphi(u))
\]
\[
=[f(s)-f(u)](\varphi(t)-\varphi(u)),
\]
so, using the H\"older conditions and $|u-s|,|t-u|\le\ell$,
\[
|\delta\mu(s,u,t)|\le M|s-u|^\alpha\cdot N|t-u|^\beta\le MN\,\ell^{\alpha+\beta}
=MN\,\ell^{1+\gamma}.
\]
This is exactly the hypothesis of Lemma~\ref{lem:sewing}, with $K=MN$;
the conclusion follows.
\end{proof}

\begin{remark}[Kondurar's original elementary argument]\label{rem:kondurar-elementary}
Kondurar's 1937 proof~\cite{K} predates the Sewing Lemma by about seventy
years and does not invoke it; the estimate above -- quasi-additivity of
$\mu(s,t)=f(s)(\varphi(t)-\varphi(s))$ under a single-point split,
$|\delta\mu(s,u,t)|\le MN\ell^{1+\gamma}$ -- is exactly what his argument
establishes directly, by dyadic bisection of a single interval $[u,v]$
of length $\ell$: writing $\Sigma_k$ for the dyadic Riemann--Stieltjes
sum of $f\,d\varphi$ at level $k$ (so $\Sigma_0=f(\xi)(\varphi(v)-\varphi(u))$
for a tag $\xi\in[u,v]$), comparing consecutive levels gives
\[
|\Sigma_{k-1}-\Sigma_k|\lesssim MN\Big(\frac1{2^{k-1}}\Big)^{\gamma}\ell
\]
(only the $\sim2^{k-1}$ segments new at level $k$, each of length
$\sim\ell/2^{k-1}$, contribute, each by at most
$M(\ell/2^{k-1})^\alpha\cdot N(\ell/2^{k-1})^\beta$), and summing the
resulting geometric series over $k$ shows $(\Sigma_k)_k$ is Cauchy, hence
convergent -- this is the same quasi-additivity estimate, derived by an
explicit dyadic ``median segment/lateral segments'' bookkeeping rather
than cited from an abstract lemma. What Kondurar's elementary argument
does \emph{not}, by itself, immediately give is convergence for
\emph{arbitrary} (non-dyadic) partitions $\pi$: that step -- inserting
the points separating an arbitrary $\pi$ from a common refinement,
organized by scale so that the same geometric series controls the total
error -- is precisely the general bookkeeping that Lemma~\ref{lem:sewing}
performs once and for all, for any germ satisfying the quasi-additivity
hypothesis; Kondurar's own article carries out the equivalent
bookkeeping directly, without isolating it as a reusable abstract
statement.
\end{remark}

\begin{remark}
The exponent condition $\alpha+\beta>1$ is exactly Young's condition
\cite{Y}: Kondurar's article, though published in 1937, was received by
the editors in August 1936, the same year as Young's paper (Acta
Mathematica, December 1936) -- the two results are independent and
essentially contemporaneous, not one derivative of the other. Corollary
\ref{thm:kondurar} is what makes good, at last, the claim of
Remark~\ref{rem:holder}, that the complex case's H\"older extension
``recovers the setting of Young's integral'': that remark asserted the
fact, and it is Corollary~\ref{thm:kondurar} (specialized to $\varphi=\gamma$
a H\"older path and $f$ H\"older with matching exponent) that actually
proves it. In this sense Kondurar's 1937 argument is, in effect, a proof
of a special case of the Sewing Lemma -- for the germ
$\mu(a,b)=f(a)(\varphi(b)-\varphi(a))$ -- some seventy years before the
general statement was isolated, a historical point we regard as
interesting but state cautiously: we have verified the coincidence of
the underlying quasi-additivity estimate explicitly above
(Remark~\ref{rem:kondurar-elementary}), rather than merely asserting
the connection.
\end{remark}

\part{Stochastic extension}

\section{The Wiener process case and semimartingales}\label{sec:realWiener}

We now apply the same idea to integrate polynomials of a Wiener process
$W$ with respect to $W$ itself, recovering a particular case of the It\^o
formula.

Given a partition $\pi\colon a=t_0<t_1<\cdots<t_n=b$, with $0\le a<b$ (as
is implicit throughout below in $\E[W_t^{2j}]=c_jt^j$, $j\ge1$), and a
fixed integer
$m$, set, as before,
\[
I_k(\pi)=\ds\sum_{i=0}^{n-1}W_{t_i}^{m-k}W_{t_{i+1}}^k(W_{t_{i+1}}-W_{t_i}),
\qquad k=0,\ldots,m,
\]
whose sum over $k$ is $W_b^{m+1}-W_a^{m+1}$, independent of $\pi$. Up to
this point nothing differs from the real case of
Section~\ref{sec:real-key}. Writing again
$I_k(\pi)=I_0(\pi)+\rho_k(\pi)$,
\begin{equation}\label{eq:e3}
W_b^{m+1}-W_a^{m+1}=(m+1)I_0(\pi)+\sum_{k=1}^m\rho_k(\pi),
\end{equation}
with
\[
\rho_k(\pi)=\sum_{i=0}^{n-1}W_{t_i}^{m-k}
\big(W_{t_{i+1}}^{k-1}+\cdots+W_{t_i}^{k-1}\big)(W_{t_{i+1}}-W_{t_i})^2
=\sum_{j=0}^{k-1}\rho_{k,j}(\pi),
\]
\begin{align*}
\rho_{k,j}(\pi)&=\sum_{i=0}^{n-1}W_{t_i}^{m-j-1}W_{t_{i+1}}^j(W_{t_{i+1}}-W_{t_i})^2\\
&=\sum_{\nu=0}^j\bin j\nu\sum_{i=0}^{n-1}W_{t_i}^{m-\nu-1}(W_{t_{i+1}}-W_{t_i})^{\nu+2}
=:\sum_{\nu=0}^j\bin j\nu\,\rho_{k,j,\nu}(\pi).
\end{align*}

\begin{lemma}\label{lem:l1}
The sums $\sum_{i=0}^{n-1}W_{t_i}^k(W_{t_{i+1}}-W_{t_i})^\ell$ converge, for
$\ell\ge3$, to $0$ in $L^2(\Omega)$; for $\ell=2$,
\[
\sum_{i=0}^{n-1}W_{t_i}^k(W_{t_{i+1}}-W_{t_i})^2\longrightarrow\int_a^bW_t^k\,dt
\qquad\text{in }L^2(\Omega).
\]
\end{lemma}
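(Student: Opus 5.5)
The plan is a direct second-moment computation. It rests on independence of Brownian increments and the Gaussian moment formula $\E[(W_{t_{i+1}}-W_{t_i})^{p}]=c_p\,\Delta_i^{p/2}$ for even $p$ (zero for odd $p$), where $\Delta_i:=t_{i+1}-t_i$. Two further facts will be used throughout: $\E[W_t^{2j}]=c_jt^j\le c_jb^j$ is bounded uniformly on $[a,b]$, and, by Cauchy--Schwarz, every mixed moment $\E[W_{t_i}^{k}W_{t_j}^{k}]$ is bounded by a constant $C_k(b)$ independent of $\pi$.

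\emph{Case $\ell\ge3$.} Set $X_i:=W_{t_i}^k(W_{t_{i+1}}-W_{t_i})^\ell$ and expand $\E\big[(\sum_iX_i)^2\big]=\sum_i\E[X_i^2]+2\sum_{i<j}\E[X_iX_j]$. For the diagonal terms, independence of $W_{t_i}$ from the increment gives $\E[X_i^2]=\E[W_{t_i}^{2k}]\,c_{2\ell}\Delta_i^{\ell}\le C\,\Delta_i^\ell$. Summing yields at most $C\,\delta(\pi)^{\ell-1}(b-a)\to0$. For $i<j$, condition on ${\cal F}_{t_j}$: this gives $\E[X_iX_j]=\E[X_iW_{t_j}^k]\,\E[(W_{t_{j+1}}-W_{t_j})^\ell]$. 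When $\ell$ is odd the cross terms vanish. When $\ell$ is even, $\E[(\Delta W_j)^\ell]=c_\ell\Delta_j^{\ell/2}$ and $|\E[X_iW_{t_j}^k]|\le C\Delta_i^{\ell/2}$ (Cauchy--Schwarz, then independence). The cross sum is therefore at most $C(\sum_i\Delta_i^{\ell/2})^2\le C\big(\delta(\pi)^{\ell/2-1}(b-a)\big)^2\to0$, since $\ell/2>1$ for $\ell\ge4$ even. A cleaner alternative is to apply Minkowski in $L^2$: $\|\sum_iX_i\|_2\le\sum_i\|X_i\|_2\le C\sum_i\Delta_i^{\ell/2}\le C(b-a)\delta(\pi)^{\ell/2-1}$, which settles every $\ell\ge3$ at once. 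I will use this.

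\emph{Case $\ell=2$.} Write $(\Delta W_i)^2=\Delta_i+\big((\Delta W_i)^2-\Delta_i\big)$. This splits the sum into $A(\pi)=\sum_iW_{t_i}^k\Delta_i$ and the martingale-type part $B(\pi)=\sum_iW_{t_i}^k\big((\Delta W_i)^2-\Delta_i\big)$.
\begin{itemize}
\item For $B(\pi)$: the summands are orthogonal in $L^2$, because conditioning on ${\cal F}_{t_j}$ kills the factor $\E[(\Delta W_j)^2-\Delta_j]=0$ for $i<j$. Hence $\E[B^2]=\sum_i\E[W_{t_i}^{2k}]\cdot2\Delta_i^2\le C\,\delta(\pi)(b-a)\to0$.
\item For $A(\pi)$: it is a left-endpoint Riemann sum of the continuous path $t\mapsto W_t^k$. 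Pathwise it converges to $\int_a^bW_t^k\,dt$ by Section~\ref{sec:cont-riemann}. To upgrade this to $L^2$ convergence, bound $\big|A(\pi)-\int_a^bW_t^k\,dt\big|\le\int_a^b|W_{\lfloor t\rfloor_\pi}^k-W_t^k|\,dt$, where $\lfloor t\rfloor_\pi$ is the left node of the subinterval containing $t$. By Jensen/Cauchy--Schwarz, its second moment is at most $(b-a)\int_a^b\E|W_{\lfloor t\rfloor_\pi}^k-W_t^k|^2dt$. Using $|x^k-y^k|\le k|x-y|(|x|+|y|)^{k-1}$ and Hölder, the integrand is $\le C\,(\E|W_s-W_t|^4)^{1/2}\le C\,\delta(\pi)$, so this term also tends to $0$.
\end{itemize}

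The main obstacle is only bookkeeping: making every constant depend on $b$ and $k,\ell$ alone, uniformly in $\pi$. Once the moment bounds are in place, the arguments above go through without further difficulty.
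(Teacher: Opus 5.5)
Your proof is correct, but it differs from the paper's in two places.

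\emph{The case $\ell\ge3$.} The paper computes second moments exactly. For odd $\ell$ the cross terms vanish by orthogonality. For even $\ell$ it first centres $\xi_i^\ell$ at $c_{\ell/2}$, then controls the remaining deterministic-weight sum $c_{\ell/2}\sum_iW_{t_i}^k\Delta_i^{\ell/2}$ with a Jensen bound. Your Minkowski estimate $\|\sum_iX_i\|_2\le\sum_i\|X_i\|_2\le C(b-a)\,\delta(\pi)^{\ell/2-1}$ disposes of every $\ell\ge3$ at once. It needs no parity split and no cross-term information; only the per-term bound $\|X_i\|_2\le C\Delta_i^{\ell/2}$ is used. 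The cost is a slightly weaker rate for odd $\ell$, e.g.\ $\delta(\pi)^{1/2}$ instead of $\delta(\pi)$ in $L^2$-norm for $\ell=3$.

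\emph{The Riemann-sum term for $\ell=2$.} For $\sum_iW_{t_i}^k\Delta_i$ the paper argues softly. It uses pathwise convergence (Section~\ref{sec:cont-riemann}), plus a uniform fourth-moment bound via Jensen that gives uniform integrability, hence $L^2$ convergence. Your estimate $\E\big|A(\pi)-\int_a^bW_t^k\,dt\big|^2\le C(b-a)^2\,\delta(\pi)$ comes from $|x^k-y^k|\le k|x-y|(|x|+|y|)^{k-1}$, Cauchy--Schwarz, and $\E|W_t-W_s|^4=3|t-s|^2$.

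This bound is quantitative and uniform over all partitions of a given mesh. It therefore gives convergence of the net directly. The paper's a.e.-plus-uniform-integrability step is a Vitali-type argument, naturally a statement about sequences. Reading it for the net implicitly uses either that the pathwise convergence is uniform in $\pi$ for fixed $\omega$, or Proposition~\ref{rem:net-vs-seq}.

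In exchange, the paper's route stays thematic: it reuses the deterministic Riemann-sum convergence of Part~I rather than a Brownian modulus-of-continuity estimate. Your treatment of the centred term $B(\pi)$, by orthogonality with $\E[B^2]=2\sum_i\E[W_{t_i}^{2k}]\Delta_i^2$, is the same as the paper's.
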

\begin{proof}
Write the sum in \eqref{e4} below as $\sum_i W_{t_i}^k\xi_i^\ell(t_{i+1}-t_i)^{\ell/2}$,
where
\begin{equation}\label{e4}
\xi_i=\frac{W_{t_{i+1}}-W_{t_i}}{\sqrt{t_{i+1}-t_i}}
\end{equation}
are i.i.d.\ $N(0,1)$, each independent of $W_{t_i}$.

For $\ell=2$,
\[
\sum_{i=0}^{n-1}W_{t_i}^k\xi_i^2(t_{i+1}-t_i)=
\sum_{i=0}^{n-1}W_{t_i}^k(\xi_i^2-1)(t_{i+1}-t_i)+\sum_{i=0}^{n-1}W_{t_i}^k(t_{i+1}-t_i).
\]
With $c_k=(2k-1)!!=\E(\xi^{2k})$ for $\xi\sim N(0,1)$, the first term satisfies
\[
\E\Big(\sum_{i=0}^{n-1}W_{t_i}^k(\xi_i^2-1)(t_{i+1}-t_i)\Big)^2
=2c_k\sum_{i=0}^{n-1}t_i^k(t_{i+1}-t_i)^2
\]
\[
\le2c_k\,\delta(\pi)\sum_{i=0}^{n-1}t_i^k(t_{i+1}-t_i)\longrightarrow0,
\]
so it converges to $0$ in $L^2(\Omega)$. The second term is a Riemann sum,
with deterministic weights $t_{i+1}-t_i$, of the continuous (hence
pathwise bounded on $[a,b]$) function $t\mapsto W_t^k(\omega)$, so it
converges a.e.\ to $\int_a^bW_t^k\,dt$. To upgrade this to $L^2(\Omega)$
convergence it suffices to bound its fourth moment uniformly in $\pi$: by
Jensen's inequality, applied to the convex function $x\mapsto x^4$ with
the weights $(t_{i+1}-t_i)/(b-a)$ (which sum to $1$),
\begin{align*}
\Big(\sum_{i=0}^{n-1}W_{t_i}^k(t_{i+1}-t_i)\Big)^4
&=(b-a)^4\Big(\sum_{i=0}^{n-1}\frac{t_{i+1}-t_i}{b-a}\,W_{t_i}^k\Big)^4\\
&\le(b-a)^3\sum_{i=0}^{n-1}(t_{i+1}-t_i)\,W_{t_i}^{4k}.
\end{align*}
Taking expectations and using $\E[W_t^{4k}]=c_{2k}t^{2k}\le c_{2k}b^{2k}$
for $t\in[a,b]$,
\[
\E\Big(\sum_{i=0}^{n-1}W_{t_i}^k(t_{i+1}-t_i)\Big)^4
\le(b-a)^3\sum_{i=0}^{n-1}(t_{i+1}-t_i)\,c_{2k}b^{2k}=(b-a)^4c_{2k}\,b^{2k},
\]
a bound independent of $\pi$. This uniform $L^4$ bound gives uniform
integrability of the squares of the (a.e.-convergent) second term, hence
convergence in $L^2(\Omega)$.

For $\ell\ge3$ odd,
\[
\E\Big(\sum_{i=0}^{n-1}W_{t_i}^k\xi_i^\ell(t_{i+1}-t_i)^{\ell/2}\Big)^2
=c_kc_\ell\sum_{i=0}^{n-1}t_i^k(t_{i+1}-t_i)^\ell
\]
\[
\le c_kc_\ell\,\delta(\pi)^{\ell-1}\sum_{i=0}^{n-1}t_i^k(t_{i+1}-t_i)\longrightarrow0.
\]
For $\ell\ge3$ even, split as for $\ell=2$,
\begin{align*}
\sum_{i=0}^{n-1}W_{t_i}^k\xi_i^\ell(t_{i+1}-t_i)^{\ell/2}
&=\sum_{i=0}^{n-1}W_{t_i}^k(\xi_i^\ell-c_{\ell/2})(t_{i+1}-t_i)^{\ell/2}\\
&\qquad+c_{\ell/2}\sum_{i=0}^{n-1}W_{t_i}^k(t_{i+1}-t_i)^{\ell/2};
\end{align*}
the first term $\to0$ in $L^2(\Omega)$ as before. For the second term,
write $(t_{i+1}-t_i)^{\ell/2}=(t_{i+1}-t_i)^{\ell/2-1}(t_{i+1}-t_i)$ and
bound $(t_{i+1}-t_i)^{\ell/2-1}\le\delta(\pi)^{\ell/2-1}$; by the same
Jensen argument used above, now with the convex function $x\mapsto x^2$,
\[
\E\Big(\sum_{i=0}^{n-1}W_{t_i}^k(t_{i+1}-t_i)\Big)^2
\le(b-a)\sum_{i=0}^{n-1}(t_{i+1}-t_i)\,\E[W_{t_i}^{2k}]
\le(b-a)^2c_k\,b^k=:C_k,
\]
a bound independent of $\pi$, so
\[
\E\Big(\sum_{i=0}^{n-1}W_{t_i}^k(t_{i+1}-t_i)^{\ell/2}\Big)^2
\le\delta(\pi)^{\ell-2}\,C_k\longrightarrow0,
\]
since $\ell>2$: the second term tends to $0$ in $L^2(\Omega)$. It also
tends to $0$ a.e.\ directly, along the net $\delta(\pi)\to0$: for
$\omega$ in the a.s.\ event that $t\mapsto W_t(\omega)$ is continuous,
$|W_t(\omega)|^k\le M(\omega):=\sup_{t\in[a,b]}|W_t(\omega)|^k<\infty$,
so $\big|\sum_iW_{t_i}(\omega)^k(t_{i+1}-t_i)^{\ell/2}\big|
\le\delta(\pi)^{\ell/2-1}M(\omega)(b-a)\to0$ as $\delta(\pi)\to0$, for
this fixed $\omega$ and \emph{every} partition of small enough mesh ---
no subsequence needed, unlike the situation of
Proposition~\ref{rem:net-vs-seq}, because here the bound is a deterministic
function of $\delta(\pi)$ alone, uniform over all partitions of a given
mesh, rather than a probabilistic statement requiring a topology to test
against nets. This a.e.\ argument applies only to this second,
deterministic-mesh-decay piece; the centred fluctuation term (present for
every $\ell\ge3$: the whole term when $\ell$ is odd, and the first
summand of the splitting above when $\ell$ is even) is bounded above only
in $L^2(\Omega)$, via the second-moment estimates used throughout this
proof, with no accompanying a.e.\ argument. Combining the two pieces by
the triangle inequality therefore yields convergence in $L^2(\Omega)$
for the full sum, for every $\ell\ge3$, but not a pathwise a.e.\
statement for the net $\delta(\pi)\to0$ --- which is why
Lemma~\ref{lem:l1} is stated in $L^2(\Omega)$ only.
\end{proof}

By Lemma~\ref{lem:l1}, $\rho_{k,j,0}(\pi)\to\int_a^bW_t^{m-1}\,dt$ in
$L^2(\Omega)$, while $\rho_{k,j,\nu}(\pi)\to0$ in $L^2(\Omega)$
for $\nu\ge1$. Hence
\[
\rho_k(\pi)\longrightarrow k\int_a^bW_t^{m-1}\,dt
\quad\text{in }L^2(\Omega),
\]
\[
\sum_{k=1}^m\rho_k(\pi)\longrightarrow\frac{m(m+1)}2\int_a^bW_t^{m-1}\,dt
\quad\text{in }L^2(\Omega),
\]
as $\delta(\pi)\to0$. From \eqref{eq:e3}\footnote{This time the various
$I_k$ converge, in $L^2(\Omega)$, to \emph{different} limits.},
\[
I_k(\pi)\longrightarrow\frac{W_b^{m+1}-W_a^{m+1}}{m+1}-\Big(\frac m2-k\Big)\int_a^bW_t^{m-1}\,dt
\qquad\text{in }L^2(\Omega),
\]
and in particular, for $k=0$,
\[
I_0(\pi)\longrightarrow\frac{W_b^{m+1}-W_a^{m+1}}{m+1}-\frac m2\int_a^bW_t^{m-1}\,dt
\qquad\text{in }L^2(\Omega).
\]
We take this $L^2(\Omega)$-limit of $I_0(\pi)$ as the \emph{definition}
of $\int_a^bW(t)^m\,dW_t$ for this integrand, rather than presupposing
the classical It\^o integral and merely renaming a limit that is proved
to coincide with it: we write
\[
W_b^{m+1}-W_a^{m+1}=(m+1)\int_a^bW(t)^m\,dW_t+\frac{m(m+1)}2\int_a^bW_t^{m-1}\,dt,
\]
and, by linearity, for any polynomial $P$,
\[
P(W_b)-P(W_a)=\int_a^bP'(W(t))\,dW(t)+\frac12\int_a^bP''(W_t)\,dt,
\]
which is the classical It\^o formula.

\begin{proposition}[Identification with the classical It\^o integral]\label{prop:ito-identification}
The definition just adopted agrees with the classical It\^o integral,
built by $L^2$-approximation of predictable integrands by elementary
(simple, piecewise-constant, adapted) processes: the left-point sums
$I_0(\pi)$ are themselves elementary stochastic integrals of simple
predictable processes converging to $W(t)^m$ in the norm used for the
It\^o isometry, so their $L^2(\Omega)$-limit, which is what
$\int_a^bW(t)^m\,dW_t$ was just defined to be, coincides with the
classical It\^o integral of $W(t)^m$, not merely in name but as the
$L^2(\Omega)$-limit of one and the same approximating family.
\end{proposition}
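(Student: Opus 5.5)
We identify $I_0(\pi)$ with the It\^o integral of an explicit simple predictable process. For a partition $\pi\colon a=t_0<\cdots<t_n=b$ we set
\[
H^\pi_t:=\sum_{i=0}^{n-1}W_{t_i}^m\,\mathbf 1_{(t_i,t_{i+1}]}(t),\qquad t\in[a,b].
\]
Each coefficient $W_{t_i}^m$ is $\mathcal F_{t_i}$-measurable and lies in $L^2(\Omega)$, because $\E[W_{t_i}^{2m}]=c_mt_i^m<\infty$. So $H^\pi$ is an elementary predictable process. By the very definition of the classical It\^o integral on elementary processes, $\int_a^bH^\pi_t\,dW_t=\sum_iW_{t_i}^m(W_{t_{i+1}}-W_{t_i})=I_0(\pi)$. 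This is an exact algebraic identity, and it holds for every $\pi$.

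The main step is to show that $H^\pi\to W^m$ in the It\^o-isometry norm $\|H\|^2:=\E\int_a^b|H_t|^2\,dt$ as $\delta(\pi)\to0$. We write
\[
\|H^\pi-W^m\|^2=\sum_i\int_{t_i}^{t_{i+1}}\E\big(W_{t_i}^m-W_t^m\big)^2\,dt .
\]
To bound the integrand we use the factorisation $W_t^m-W_{t_i}^m=(W_t-W_{t_i})(W_t^{m-1}+\cdots+W_{t_i}^{m-1})$, which is the telescoping identity underlying the whole paper. Applying Cauchy--Schwarz in $\Omega$ gives
\[
\E\big(W_t^m-W_{t_i}^m\big)^2\le\big(\E(W_t-W_{t_i})^4\big)^{1/2}\Big(\E\big(\textstyle\sum_{j}W_t^{j}W_{t_i}^{m-1-j}\big)^4\Big)^{1/2}.
\]
The first factor equals $\sqrt3\,(t-t_i)$. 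The second factor is bounded by a constant $C_m(b)$ depending only on $m$ and $b$, since all Gaussian moments $\E[W_s^{p}]=c_{p/2}s^{p/2}$ are bounded for $s\le b$ (use H\"older's inequality on the mixed products). Hence
\[
\|H^\pi-W^m\|^2\le\sqrt3\,C_m(b)\sum_i(t_{i+1}-t_i)^2\le\sqrt3\,C_m(b)(b-a)\,\delta(\pi)\to0 .
\]
This moment estimate is the only place where care is needed. Its one subtle point is that the bound is a deterministic function of $\delta(\pi)$. It therefore holds uniformly along the net, and no extraction of a sequence is required.

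We conclude as follows. By construction, the classical It\^o integral $\int_a^bW_t^m\,dW_t$ is the $L^2(\Omega)$-limit of $\int H^{(n)}dW$ for any elementary $H^{(n)}\to W^m$ in $\|\cdot\|$. This limit is well defined because of the isometry $\E|\int(H-K)\,dW|^2=\|H-K\|^2$. Applying the isometry to $H^\pi-W^m$ yields
\[
\E\Big|I_0(\pi)-\int_a^bW_t^m\,dW_t\Big|^2=\|H^\pi-W^m\|^2\to0 .
\]
The paper's definition takes the $L^2(\Omega)$-limit of $I_0(\pi)$, which exists by Lemma~\ref{lem:l1} and \eqref{eq:e3}. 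Limits in $L^2(\Omega)$ are unique up to null sets, so the two integrals coincide almost surely. As a by-product, the isometry also re-derives the existence of the limit independently of Lemma~\ref{lem:l1}.
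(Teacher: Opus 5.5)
Your proposal is correct and follows the same route as the paper: $I_0(\pi)$ is exactly the elementary It\^o integral of the left-point simple process, these processes converge to $W^m$ in the isometry norm, and the isometry together with uniqueness of $L^2(\Omega)$-limits identifies the two integrals. Your factorization-and-Gaussian-moment bound $\|H^\pi-W^m\|^2\le\sqrt3\,C_m(b)(b-a)\,\delta(\pi)$ makes explicit the convergence step that the paper justifies only by appeal to path continuity, and because the bound is deterministic in $\delta(\pi)$ it gives convergence along the full net rather than only along sequences.
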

\begin{proof}
For a fixed
partition $\pi$, the left-point sum
$I_0(\pi)=\sum_iW_{t_i}^m(W_{t_{i+1}}-W_{t_i})$ is, by definition, the
elementary stochastic integral $\int_a^b E_\pi(t)\,dW_t$ of the simple
predictable process $E_\pi(t):=W_{t_i}^m$ for $t\in[t_i,t_{i+1})$ (simple
because it is constant on each $[t_i,t_{i+1})$, predictable because
$W_{t_i}^m$ is $\mathcal F_{t_i}$-measurable); and the classical It\^o
integral of $W(t)^m$ is, by construction, the $L^2(\Omega)$-limit of
exactly such elementary integrals, along any sequence of simple
predictable processes converging to $W(t)^m$ in the $L^2([a,b]\times
\Omega)$ norm used for the isometry --- which the left-point processes
$E_\pi$ do, as $\delta(\pi)\to0$, since $W(t)^m$ has continuous (hence
$L^2$-bounded on the compact $[a,b]$) paths; explicitly, the relevant
integrands are square-integrable in the sense required by the isometry,
$\E\int_a^b|W_t|^{2m}\,dt<\infty$, since $t\mapsto\E|W_t|^{2m}=c_mt^m$
is continuous, hence bounded, on the compact $[a,b]$. So the limit constructed
above is not merely notated as the It\^o integral: it is literally the
$L^2$-limit of the same elementary approximations used in the standard
construction of that integral, for this particular choice of
approximating sequence.
\end{proof}

\begin{remark}
More generally, one may define the limit of $I_k(\pi)$ as the
$k$-stochastic integral $\int_a^bW_t^m\,d^{(k)}W_t$ --- the notation
retaining the dependence on the fixed polynomial degree $m$ appearing in
$I_k(\pi)$, since the same construction with a different $m$ gives a
different integral ---
\[
\int_a^bW_t^m\,d^{(k)}W_t:=\lim_{\delta(\pi)\to0}I_k(\pi)
=\frac{W_b^{m+1}-W_a^{m+1}}{m+1}-\Big(\frac m2-k\Big)\int_a^bW_t^{m-1}\,dt,
\]
which leads to a $k$-It\^o formula
\[
P(W_b)-P(W_a)=\int_a^bP'(W(t))\,dW(t)+\Big(\frac12-\frac km\Big)\int_a^bP''(W_t)\,dt.
\]
In particular, averaging the two extreme cases $k=0$ and $k=m$ cancels
the correction term entirely, since their coefficients $\tfrac12-\tfrac
km$ are $\tfrac12$ and $-\tfrac12$: this average is precisely the
Stratonovich integral, characterized by satisfying the ordinary chain
rule with no second-derivative correction. We do not claim, and have not
verified, that other linear combinations of the $I_k(\pi)$ define
stochastic integrals with any comparable interpretation; the averaging
identity above is the only such combination this note establishes.
\end{remark}

\begin{remark}[Forward, backward, and F\"ollmer's own choice]
The case $k=0$ (left-point sums, $I_0(\pi)$) is exactly the tag
convention F\"ollmer uses in his own pathwise construction of the It\^o
integral~\cite{F}: writing his compensated sum, in the notation of
\cite{ContPerkowski19}, as $\sum\nabla f(S(t_j))\cdot(S(t_{j+1})-S(t_j))$,
the integrand is always evaluated at the \emph{left} endpoint $t_j$ --
the forward, It\^o convention, our $k=0$. Had F\"ollmer instead tagged
at the right endpoint $t_{j+1}$ -- our $k=m$ -- the same telescoping
argument would define the \emph{backward} stochastic integral in place
of the forward one, with the correction term's sign reversed
($+\frac12$ instead of $-\frac12$, matching the coefficient $\tfrac12-
\tfrac km$ above at $k=m$). Stratonovich's integral, with no correction
at all, sits exactly halfway between the two -- which is one more way
of stating the averaging identity just proved.
\end{remark}

\subsection{Extension to semimartingales}

Before stating the next result, we make explicit the exact mode of
convergence used throughout this subsection, since it is easy to leave
implicit and the manuscript's convergence statements are given in the
net (Moore--Smith) sense of Proposition~\ref{rem:net-vs-seq}, not merely
along a sequence. Concretely, ``$Y_\pi\to Y$ in probability as
$\delta(\pi)\to0$'', for a family of random variables $(Y_\pi)_{\pi\in
D}$ indexed by the directed set $D$ of partitions of $[a,b]$ (ordered
by $\delta(\pi')\le\delta(\pi)$, as in Section~\ref{sec:realWiener}),
means
\[
\forall\,\eps>0\ \forall\,\rho>0\ \ \exists\,\eta>0\ \ \text{such that}\ \
\delta(\pi)<\eta\ \Longrightarrow\ P\big(|Y_\pi-Y|>\eps\big)<\rho,
\]
i.e.\ the $\eta$ in the definition of net convergence must work
\emph{simultaneously for every partition} of mesh below it -- not merely
along one sequence of partitions with mesh $\to0$. By
Proposition~\ref{rem:net-vs-seq} below, this is equivalent to the same
statement holding along \emph{every} sequence of partitions with mesh
$\to0$, which is the form in which convergence in probability is usually
stated in the stochastic-calculus literature; we use the net formulation
here because it is what the hypothesis of Lemma~\ref{lem:l2} below is
naturally stated in (a limit taken over the fixed directed set
$\mathcal P$ of partitions, uniformly in $\pi$, rather than along a
sequence chosen in advance), and Proposition~\ref{rem:net-vs-seq},
proved later in this section, is precisely what licenses passing freely
between the two formulations wherever needed.

\begin{lemma}\label{lem:l2}
Let $X_t$, $t\in[a,b]$, have continuous paths (a.s.), and let $A_t$,
$t\in[a,b]$, be a continuous, non-decreasing process such that, for
every $s\le t$ in $[a,b]$ and every partition $\pi$ \emph{containing}
$s$ and $t$ as partition points (i.e.\ $s,t\in\{t_0,\ldots,t_n\}$, so
that the sub-sum below is unambiguous, with no boundary fragment left
over at either end),
\[
\sum_{i\,:\,s\le t_i<t_{i+1}\le t}(X_{t_{i+1}}-X_{t_i})^2
\longrightarrow A_t-A_s\qquad\text{in probability, as }\delta(\pi)\to0
\]
along such $s,t$-containing partitions (taking $s=a$, $t=b$ gives, in
particular, $\sum_{i=0}^{n-1}(X_{t_{i+1}}-X_{t_i})^2\to A_b-A_a$, and
every partition trivially contains $a,b$ as its endpoints). Then, for
$\ell\ge3$,
\[
\sum_{i=0}^{n-1}X_{t_i}^k(X_{t_{i+1}}-X_{t_i})^\ell\longrightarrow0
\qquad\text{in probability},
\]
while for $\ell=2$,
\[
\sum_{i=0}^{n-1}X_{t_i}^k(X_{t_{i+1}}-X_{t_i})^2\longrightarrow\int_a^bX_t^k\,dA_t
\qquad\text{in probability}.
\]
\end{lemma}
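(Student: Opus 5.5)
The plan is to argue pathwise as far as possible and to bring in the hypothesis on $A$ only through finitely many fixed sub-intervals, where convergence in probability can be combined by a union bound. Fix $\omega$ in the a.s.\ event where $X_\cdot(\omega)$ is continuous, and write $\Delta_i:=X_{t_{i+1}}-X_{t_i}$ and $\mu(\pi):=\max_i|\Delta_i|$. By uniform continuity of the path on $[a,b]$, $\mu(\pi)\to0$ as $\delta(\pi)\to0$, pathwise and uniformly over all partitions of mesh below a threshold depending on $\omega$; hence $\mu(\pi)\to0$ in probability in the net sense. Also $\sup_{[a,b]}|X_t|^k=:K(\omega)<\infty$.

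\emph{Case $\ell\ge3$.} This is immediate:
\[
\Big|\sum_iX_{t_i}^k\Delta_i^\ell\Big|\le K\,\mu(\pi)^{\ell-2}\sum_i\Delta_i^2 .
\]
The last factor converges in probability to $A_b-A_a$, so it is bounded in probability. The product of a term tending to $0$ in probability with a term bounded in probability tends to $0$ in probability.

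\emph{Case $\ell=2$.} Fix a deterministic coarse grid $a=s_0<s_1<\dots<s_p=b$ and consider only partitions $\pi$ containing all the $s_j$. Let $g(t):=X_t^k$, and let $\omega_g$ be its (random) modulus of continuity. Group the sum by the blocks $[s_j,s_{j+1}]$ and compare it with $\sum_jg(s_j)\sum_{i\in\text{block }j}\Delta_i^2$. The error is at most $\omega_g(\max_j(s_{j+1}-s_j))\sum_i\Delta_i^2$. For each of the finitely many $j$, the hypothesis gives $\sum_{i\in\text{block }j}\Delta_i^2\to A_{s_{j+1}}-A_{s_j}$ in probability, and a union bound over the $p$ blocks combines them. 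The comparison sum therefore tends to $\sum_jg(s_j)(A_{s_{j+1}}-A_{s_j})$. This is a Riemann--Stieltjes sum for $\int_a^bX_t^k\,dA_t$, which exists pathwise by Theorem~\ref{thm:rs-continuous}, since $A$ is monotone, hence of bounded variation, and $g$ is continuous. Its distance from the integral is at most $\omega_g(\text{mesh of grid})(A_b-A_a)$. Letting first $\delta(\pi)\to0$ and then the grid mesh go to $0$, an $\eps/3$ argument gives the claim in probability, for partitions containing the grid.

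\emph{Main obstacle: general partitions.} The step I expect to be hardest is removing the restriction that $\pi$ contain the grid points; the rest is routine. For a general $\pi$, I would insert the points $s_j$ to obtain $\pi\cup\{s_j\}$. This changes only the at most $p$ intervals of $\pi$ that straddle some $s_j$. On each such interval, $\Delta^2$ is replaced by $(\Delta')^2+(\Delta'')^2$, and the weight changes from $g(t_i)$ to $g(t_i)$ and $g(s_j)$. The total change is bounded by $p\,(2K+\omega_g(\delta(\pi)))\cdot 4\,\sup_{|u-v|\le\delta(\pi)}|X_u-X_v|^2$, which tends to $0$ pathwise for fixed $p$ as $\delta(\pi)\to0$. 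Moreover $\delta(\pi\cup\{s_j\})\le\delta(\pi)$, so the grid-containing result applies along the net. This reduction is also what makes the net-sense, uniform-in-$\pi$ formulation work without passing to sequences, because every estimate is either pathwise in $\delta(\pi)$ or comes from finitely many fixed grid intervals.
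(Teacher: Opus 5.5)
Your proposal is correct and follows essentially the same route as the paper: approximate the weight $X_t^k$ by its values on a fixed deterministic grid, apply the hypothesis on the finitely many grid blocks, insert the grid points to pass to arbitrary partitions, and for $\ell\ge3$ bound by $\max_i|\Delta_i|^{\ell-2}$ times a quantity bounded in probability. One detail is in your favour: in the insertion step you control the split cells by the pathwise oscillation $\sup_{|u-v|\le\delta(\pi)}|X_u-X_v|^2$, which is the correct quantity. The paper's proof instead claims a deterministic bound $3(N-1)M_\phi\,\delta(\pi)^2$ by treating increments of $X$ as if they were at most $\delta(\pi)$, which is unjustified for a merely continuous $X$; your pathwise bound repairs this at no cost, since it tends to $0$ almost surely and hence in probability.
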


\begin{proof}
Throughout, write $\Delta X_i:=X_{t_{i+1}}-X_{t_i}$ for the increments of a
generic partition $\pi\colon a=t_0<\cdots<t_n=b$. We do \emph{not} use a
semimartingale decomposition $X=Y+M$, nor any martingale property of $A$:
the conclusion follows directly from the two stated hypotheses ---
convergence of $\sum(\Delta X_i)^2$ to $A_t-A_s$ on every sub-interval
$[s,t]$, along partitions containing $s,t$ as grid points, and continuity
of the path $t\mapsto X_t$ --- together with the
monotonicity of $A$, used below. This is the same elementary
Riemann--Stieltjes convergence argument used, in the deterministic
setting, throughout this note, now carried out ``in probability'' instead
of pointwise, and it requires nothing from the general theory of
semimartingales.

\medskip\noindent\textit{Step 1: the case $\ell=2$ for a simple weight.}
Let $\phi$ be a (non-random, or more generally $\mathcal F_a$-measurable)
step function on $[a,b]$, $\phi(t)=c_j$ for $t\in[s_j,s_{j+1})$, for a
fixed partition $a=s_0<s_1<\cdots<s_N=b$, and set
$M_\phi:=\max_j|c_j|<\infty$. For $\pi$ \emph{containing} $\{s_j\}$
(i.e.\ $\{s_j\}\subset\{t_0,\ldots,t_n\}$),
\begin{align*}
\sum_i\phi(t_i)(\Delta X_i)^2&=\sum_{j=0}^{N-1}c_j\!\!
\sum_{i\,:\,t_i\in[s_j,s_{j+1})}\!\!(\Delta X_i)^2\\
&\ \xrightarrow[\delta(\pi)\to0]{P}\ \sum_{j=0}^{N-1}c_j(A_{s_{j+1}}-A_{s_j})
=:\int_a^b\phi(t)\,dA_t,
\end{align*}
using the hypothesis on each of the finitely many fixed sub-intervals
$[s_j,s_{j+1}]$ and the fact that a finite sum of terms, each converging
in probability, converges in probability. This proves the $\ell=2$
statement for a step-function weight in place of $X_{t_i}^k$, for
partitions containing $\{s_j\}$, directly from the hypothesis.

\smallskip\noindent\emph{Extension to arbitrary $\pi$.} The hypothesis
of the Lemma, and hence the display above, is stated only for partitions
containing the fixed points $\{s_j\}$; we now remove this restriction,
for this same fixed $\phi$, since it is needed for what follows. Given
an arbitrary partition $\pi\colon a=t_0<\cdots<t_n=b$ (not necessarily
containing $\{s_j\}$), let $\widehat\pi:=\pi\cup\{s_j\}$ be the partition
obtained by inserting the (at most $N-1$ interior) points of $\{s_j\}$
not already in $\pi$; then $\widehat\pi\supset\{s_j\}$, and inserting
points can only shrink cells, so $\delta(\widehat\pi)\le\delta(\pi)$.
At most $N-1$ cells of $\pi$ are split by this insertion (one per
interior point $s_j$); every other cell of $\pi$ is a cell of
$\widehat\pi$ too, contributing identically to both sums below.
For a split cell $[t_i,t_{i+1}]\ni s_j$ (in its interior), the
$\pi$-sum contributes $\phi(t_i)(X_{t_{i+1}}-X_{t_i})^2$, while the
$\widehat\pi$-sum contributes
$\phi(t_i)(X_{s_j}-X_{t_i})^2+\phi(s_j)(X_{t_{i+1}}-X_{s_j})^2$; both
are, in modulus, at most $M_\phi\,\delta(\pi)^2$ (each factor
$|X_{t_{i+1}}-X_{t_i}|,|X_{s_j}-X_{t_i}|,|X_{t_{i+1}}-X_{s_j}|$ is at
most $\delta(\pi)$, being a sub-increment of a cell of $\pi$ of length
$\le\delta(\pi)$), so their difference is at most $3M_\phi\,\delta(\pi)^2$.
Summing over the at most $N-1$ affected cells,
\[
\Big|\sum_i\phi(t_i)(\Delta X_i)^2-\sum_i\phi(\hat t_i)(\Delta\widehat
X_i)^2\Big|\le3(N-1)M_\phi\,\delta(\pi)^2\longrightarrow0
\]
\emph{deterministically}, as $\delta(\pi)\to0$ -- with $N$ fixed (it
depends only on $\phi$, chosen before $\pi$), this bound has nothing
random or probabilistic left in it, unlike every other estimate in this
proof. Since $\widehat\pi\supset\{s_j\}$ and
$\delta(\widehat\pi)\le\delta(\pi)\to0$, the first display of this Step
applies to $\widehat\pi$, giving
$\sum_i\phi(\hat t_i)(\Delta\widehat X_i)^2\to\int_a^b\phi\,dA_t$ in
probability as $\delta(\pi)\to0$; combined with the deterministic bound
above,
\[
\sum_i\phi(t_i)(\Delta X_i)^2\ \xrightarrow[\delta(\pi)\to0]{P}\
\int_a^b\phi(t)\,dA_t
\]
for \emph{every} partition $\pi$, not only those containing $\{s_j\}$.
This is the form of Step~1 used in Step~2 below.

\medskip\noindent\textit{Step 2: the case $\ell=2$ for $X_t^k$, by
approximation.} Since $X$ has continuous paths a.s.\ and $[a,b]$ is
compact, the (random) modulus of continuity
\[
\omega_{X^k}(\eta):=\sup_{\substack{s,t\in[a,b]\\|t-s|\le\eta}}|X_t^k-X_s^k|
\]
satisfies $\omega_{X^k}(\eta)\to0$ a.s.\ as $\eta\to0^+$ (pathwise
uniform continuity on the compact $[a,b]$), hence also in probability.
So, given $\eps>0$ and $\rho>0$, there is a \emph{deterministic}
$\eta=\eta(\eps,\rho)>0$ --- not depending on $\omega$ --- such that
\[
P\big(\omega_{X^k}(\eta)>\eps\big)<\rho.
\]
Fix, once and for all, a deterministic partition $a=s_0<\cdots<s_N=b$ of
mesh $<\eta$ (independent of $\omega$), and define the step function
$\phi(t):=X_{s_j}^k$ for $t\in[s_j,s_{j+1})$; $\phi$ is random through
its values $X_{s_j}^k$, but its underlying partition $\{s_j\}$ is fixed,
so Step~1 (in the form just established for arbitrary partitions)
applies to it directly, without restricting $\pi$. On the event
$\{\omega_{X^k}(\eta)\le\eps\}$, of probability $\ge1-\rho$,
$\sup_{t\in[a,b]}|X_t^k-\phi(t)|\le\eps$ pathwise. For \emph{every}
partition $\pi$, on this event,
\begin{align*}
\Big|\sum_iX_{t_i}^k(\Delta X_i)^2-\sum_i\phi(t_i)(\Delta X_i)^2\Big|
&\le\Big(\sup_t|X_t^k-\phi(t)|\Big)\sum_i(\Delta X_i)^2\\
&\le\eps\sum_i(\Delta X_i)^2,
\end{align*}
and likewise $\big|\int_a^bX_t^k\,dA_t-\int_a^b\phi(t)\,dA_t\big|
\le\eps(A_b-A_a)$, since $A$ is non-decreasing. Since $\sum_i(\Delta
X_i)^2\to A_b-A_a$ in probability (the hypothesis, $t=b$), it is bounded
in probability: given $\rho>0$ there is $M$ with
$P\big(\sum_i(\Delta X_i)^2>M\big)<\rho$ for all $\pi$ with $\delta(\pi)$
small enough. On the intersection of this event with
$\{\omega_{X^k}(\eta)\le\eps\}$ --- of probability $\ge1-2\rho$, for the
deterministic $\eta=\eta(\eps,\rho)$ fixed above --- both error terms are
at most $\eps M$ and $\eps(A_b-A_a)$ respectively; combined with Step~1
applied to this fixed, deterministic $\phi$, a standard ``$3\eps$''
argument gives
\[
\sum_iX_{t_i}^k(\Delta X_i)^2\ \xrightarrow[\delta(\pi)\to0]{P}\ \int_a^bX_t^k\,dA_t,
\]
which is the $\ell=2$ statement, established directly from the hypothesis
and the continuity of $X$, without any martingale argument. Working
throughout with a fixed, deterministic partition $\{s_j\}$ --- rather
than a partition constructed after fixing $\omega$ --- keeps the
approximation argument and the ``in probability'' statement cleanly
separated.

\medskip\noindent\textit{Step 3: the case $\ell\ge3$.} By Step~2 (with
$|X_t|^k$ in place of $X_t^k$), $\sum_i|X_{t_i}|^k(\Delta X_i)^2\to
\int_a^b|X_t|^k\,dA_t$ in probability, so this quantity is bounded in
probability (tight) as $\delta(\pi)\to0$. Since $X$ is (a.s.) uniformly
continuous on $[a,b]$, $\eta(\pi):=\max_i|\Delta X_i|\to0$ a.s.; then, for
$\ell\ge3$,
\[
\Big|\sum_iX_{t_i}^k(\Delta X_i)^\ell\Big|
\le\eta(\pi)^{\ell-2}\sum_i|X_{t_i}|^k(\Delta X_i)^2,
\]
a product of a term $\to0$ a.s.\ (hence in probability) and a tight term,
hence itself $\to0$ in probability. (As in Lemma~\ref{lem:l1}, this
argument gives convergence in probability for the net $\delta(\pi)\to0$;
an almost-sure statement is \emph{not} established by this route and
would need either a fixed sequence of partitions with
$\sum_n\delta(\pi_n)<\infty$ together with a Borel--Cantelli argument, or
the $L^2$-theory for the quadratic variation of a bounded martingale
together with passage to a subsequence.)
\end{proof}

\begin{remark}
This proof is more elementary than one built on the semimartingale
decomposition $X=Y+M$: it never separates a bounded-variation part from a
martingale part, never invokes the defining martingale property of
$\langle M\rangle$ (hence no Doob--Meyer decomposition), and uses no
conditional expectations at all. The only inputs are (i) the hypothesis
itself, applied on finitely many fixed sub-intervals (Step~1), and (ii)
the classical $\eps$-approximation of a continuous function by step
functions (Step~2) — exactly the mechanism already used, in the
deterministic setting, to establish Riemann--Stieltjes convergence
elsewhere in this note, now run ``in probability''. The semimartingale
decomposition $X=Y+M$ introduced below is \emph{not} needed for
Lemma~\ref{lem:l2} itself: it is needed only afterwards, to identify the
process $A_t$ appearing in the Lemma's hypothesis with the quadratic
variation $\langle M\rangle_t$ of the martingale part of a concrete
semimartingale $X$.
\end{remark}

\begin{proposition}[Nets versus sequences]\label{rem:net-vs-seq}
Let $D$ be the set of partitions of $[a,b]$, preordered by
$\pi\preceq\pi'$ iff $\delta(\pi)\ge\delta(\pi')$ (a directed preorder,
not a partial order: two distinct partitions of equal mesh are
equivalent for it, which causes no difficulty, since Moore--Smith
convergence only requires $D$ to be a directed preorder). Let $X$ be a
Hausdorff space and $(S_\pi)_{\pi\in D}$ a net in $X$. Then $S_\pi\to x$
as $\delta(\pi)\to0$ (net convergence) if and only if $S_{\pi_n}\to x$
for every sequence $(\pi_n)\subset D$ with $\delta(\pi_n)\to0$
(sequential convergence along arbitrary vanishing-mesh sequences, with
no relation required between successive $\pi_n$).
\end{proposition}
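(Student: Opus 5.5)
The forward implication is a direct unpacking of the definitions. Suppose $S_\pi\to x$ as a net, and let $(\pi_n)$ be any sequence in $D$ with $\delta(\pi_n)\to0$. Given a neighbourhood $U$ of $x$, net convergence provides some $\pi^*\in D$ with $S_\pi\in U$ for every $\pi\succeq\pi^*$, that is, for every $\pi$ with $\delta(\pi)\le\delta(\pi^*)$. Since $\delta(\pi^*)>0$ and $\delta(\pi_n)\to0$, there is $n_0$ with $\delta(\pi_n)\le\delta(\pi^*)$ for all $n\ge n_0$. Hence $S_{\pi_n}\in U$ for $n\ge n_0$. No relation between successive $\pi_n$ is used, only that their meshes tend to $0$. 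The Hausdorff hypothesis plays no role in this direction; it only guarantees that the limit is unique.

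For the converse I would argue by contraposition. Suppose $S_\pi\not\to x$ as a net. Then there is a neighbourhood $U$ of $x$ such that for every $\pi^*\in D$ some $\pi\succeq\pi^*$ has $S_\pi\notin U$. The key observation is that the preorder is governed entirely by the single real number $\delta(\pi)$, and every value $|b-a|/n$ is attained, by the uniform partition into $n$ pieces. So for each $n$ I take $\pi^*_n$ to be the uniform partition of mesh $(b-a)/n$. The failure of net convergence then yields $\pi_n$ with $\delta(\pi_n)\le(b-a)/n$ and $S_{\pi_n}\notin U$. This gives a sequence with $\delta(\pi_n)\to0$ along which $S_{\pi_n}$ never enters $U$, so it does not converge to $x$, contradicting the hypothesis.

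The main point to get right is the cofinality step. The directed preorder $D$ has a countable cofinal subset, namely the uniform partitions, because the order factors through $\delta\colon D\to(0,b-a]$ with the reversed order on $\R$, and this has countable cofinality. That is exactly what allows a sequence to replace an arbitrary net here, with no first-countability assumption on $X$.

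The statement is used later in the form ``$S_\pi\to S$ in probability''. For that application I would note that convergence in probability is metrizable, for instance by the Ky Fan metric $\E\min(1,|Y-Y'|)$, so $X$ can be taken to be the corresponding metric (hence Hausdorff) space of equivalence classes of random variables. The $\eps$–$\rho$–$\eta$ formulation in the text is then literally net convergence in that space.
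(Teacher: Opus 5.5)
Your proposal is correct and follows essentially the paper's argument. For the converse you argue by contraposition, pick for each $n$ a partition of small mesh (your uniform partitions, the paper's arbitrary $\gamma_n$ with $\delta(\gamma_n)<1/n$), and extract $\pi_n\succeq\gamma_n$ with $S_{\pi_n}$ outside the fixed bad neighbourhood. Your forward direction spells out, via $\delta(\pi^*)>0$, what the paper calls immediate. Your further remarks are accurate and make explicit what the paper leaves implicit: the Hausdorff hypothesis is not used, and the countable cofinal family of uniform partitions is what lets sequences suffice.
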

\begin{proof}
Recall $S_\pi\to x$ means: for every open $G\ni x$ there is $\pi_G\in D$
such that $S_\pi\in G$ for all $\pi\succeq\pi_G$. Suppose the net does
\emph{not} converge to $x$: there is an open $G_0\ni x$ such that for
every $\gamma\in D$ there exists $\pi_\gamma\succeq\gamma$ with
$S_{\pi_\gamma}\notin G_0$. Fix any sequence $\gamma_n\in D$ with
$\delta(\gamma_n)<1/n$, and apply this to each $\gamma=\gamma_n$: there
is $\pi_n:=\pi_{\gamma_n}\succeq\gamma_n$ with $S_{\pi_n}\notin G_0$.
Since the order is defined purely by mesh comparison,
$\pi_n\succeq\gamma_n$ forces $\delta(\pi_n)\le\delta(\gamma_n)<1/n$; so
$\{\pi_n\}$ is a genuine sequence of partitions with $\delta(\pi_n)\to0$
along which $S_{\pi_n}$ stays outside the fixed neighbourhood $G_0$ for
every $n$, hence does \emph{not} converge to $x$. Contrapositively: if
$S_{\pi_n}\to x$ along every sequence with mesh $\to0$, the net
converges. The converse is immediate, since any such sequence eventually
lies in every neighbourhood used to define the net limit.
\end{proof}

\begin{remark}
Throughout this note, ``$\sum_i(\Delta X_i)^2\to A_t$ in probability''
means convergence of the net of
Proposition~\ref{rem:net-vs-seq}, with $X=L^0(\Omega)$ metrized by the Ky
Fan distance $\rho(U,V):=\E[|U-V|\wedge1]$ (which makes $L^0(\Omega)$ a
genuine, metrizable, hence Hausdorff, topological space) and $x=A_t$.
Kunita's Definition~2.1 in \cite{Kunita82}, by contrast, is genuinely
sequential: it requires $\langle X\rangle_t^{\Delta_m}\to A_t$ in
probability along every sequence $\{\Delta_m\}$ with $|\Delta_m|\to0$,
with no simultaneous quantification over all partitions of a given mesh.
Proposition~\ref{rem:net-vs-seq} shows the two formulations agree; Kunita
does not state this equivalence, working exclusively with sequences
throughout \cite{Kunita82}. It is needed here because the Lemma's
hypothesis, and its use below to identify $A_t$ with $\langle
M\rangle_t$ via \cite{Kunita82}, are stated in the net form, while
Kunita's own results are stated sequentially.

Almost-sure convergence does not fit into this framework for a more
specific reason than mere awkwardness: for $(\Omega,\mathcal F,P)$
non-atomic (in particular, for the Wiener space underlying
Section~\ref{sec:realWiener}), there is no topology $\tau$ on
$L^0(\Omega)$ such that, for every sequence $(X_n)$,
$X_n\xrightarrow{\tau}X$ if and only if $X_n\to X$ almost surely
\cite{Ordman66}. Consequently Proposition~\ref{rem:net-vs-seq} --- which
requires a topological space $X$ with a notion of open neighbourhood
$G\ni x$ to even state --- has no such $X$ to act on when the target mode
of convergence is almost-sure convergence, independently of any metric
consideration. This is consistent with, and sharper than, the
obstruction encountered in the discussion inside the proof of
Lemma~\ref{lem:l2} above, where an almost-sure statement is shown to
require either a fixed sequence of partitions with
$\sum_n\delta(\pi_n)<\infty$ and Borel--Cantelli, or passage to a
subsequence: it is exactly why Kunita's own proof of Theorem~2.2 in
\cite{Kunita82} must pass to a subsequence to upgrade $L^2$ convergence to
almost-sure convergence, rather than asserting it for the net directly.

This obstruction is general -- it holds for arbitrary sequences of
partitions, with no further restriction -- but it is not the end of the
story for the almost-sure quadratic variation of a specific path such
as Brownian motion. Cont and Das~\cite{ContDas23} study exactly the
dependence, pathwise, of $[x]_\pi(t)$ on the choice of partition
sequence $\pi=(\pi^n)$, and isolate a property they call
\emph{quadratic roughness}, along with a restriction to
\emph{balanced} partition sequences (bounded ratio of consecutive
mesh sizes, as for the dyadic sequence). For H\"older paths with this
property -- and they show Brownian paths have it, almost surely --
the almost-sure limit $[x]_\pi(t)$ is the \emph{same} for every
balanced sequence $\pi$, without needing to pass to a subsequence.
This does not contradict Ordman's obstruction, which concerns
arbitrary sequences with no balance restriction; it identifies,
instead, a natural and large class of partition sequences on which
the almost-sure quadratic variation of a typical Brownian path is,
after all, sequence-independent -- precisely the positive counterpart,
for this specific path and this restricted class of sequences, to the
general impossibility recorded above.
\end{remark}

The main example of such a process is a continuous semimartingale
$X_t=Y_t+M_t$, with $Y_t$ a continuous bounded-variation process, $M_t$ a
continuous martingale, and $A_t$ the quadratic variation of $M_t$.
Repeating verbatim the construction above with $\alpha_i=X_{t_i}$,
\[
I_k(\pi)=\ds\sum_{i=0}^{n-1}X_{t_i}^{m-k}X_{t_{i+1}}^k(X_{t_{i+1}}-X_{t_i}),
\]
\[
\rho_{k,j}(\pi)=\sum_{i=0}^{n-1}X_{t_i}^{m-j-1}X_{t_{i+1}}^j(X_{t_{i+1}}-X_{t_i})^2,
\]
and, invoking Lemma~\ref{lem:l2} in place of Lemma~\ref{lem:l1}, exactly
the same computation gives
\[
\rho_k(\pi)\longrightarrow k\int_a^bX_t^{m-1}\,dA_t,
\]
\[
I_0(\pi)\longrightarrow\frac{X_b^{m+1}-X_a^{m+1}}{m+1}-\frac m2\int_a^bX_t^{m-1}\,dA_t,
\]
in probability. Thus
\[
X_b^{m+1}-X_a^{m+1}=(m+1)\int_a^bX(t)^m\,dX_t+\frac{m(m+1)}2\int_a^bX_t^{m-1}\,dA_t,
\]
and, for any polynomial $P$,
\begin{align*}
P(X_b)-P(X_a)&=\int_a^bP'(X(t))\,dX(t)+\frac12\int_a^bP''(X_t)\,dA_t\\
&=\int_a^bP'(X(t))\,dY(t)+\int_a^bP'(X(t))\,dM(t)\\
&\qquad+\frac12\int_a^bP''(X_t)\,dA_t.
\end{align*}

\subsection{The complex (planar) Wiener process}

Let $X_t,Y_t$ be two independent real Wiener processes, and let
$Z_t=X_t+iY_t$ be the corresponding \emph{complex}, or planar, Brownian
motion. We repeat the construction of Section~\ref{sec:realWiener} above,
but now exploiting multiplication in the field $\C$ instead of $\R$ —
exactly as, for the deterministic case, going from the real line to the
complex plane in Section~2 extended the real telescoping identity to
complex multiplication.

Given a partition $\pi$, $a=t_0<t_1<\cdots<t_n=b$, and a fixed integer
$m$, let
\[
I_k(\pi)=\sum_{i=0}^{n-1} Z_{t_i}^{m-k}Z_{t_{i+1}}^k\,(Z_{t_{i+1}}-Z_{t_i}),
\qquad k=0,\ldots,m,
\]
computed with complex multiplication. Exactly as above,
\[
\sum_{k=0}^m I_k(\pi)=Z_b^{m+1}-Z_a^{m+1}
\]
identically, this being a partition-independent algebraic identity;
writing, as before, $I_k(\pi)=I_0(\pi)+\rho_k(\pi)$,
\[
\rho_k(\pi)=\sum_{i=0}^{n-1} Z_{t_i}^{m-k}\big(Z_{t_{i+1}}^{k-1}+
Z_{t_{i+1}}^{k-2}Z_{t_i}+\cdots+Z_{t_i}^{k-1}\big)(Z_{t_{i+1}}-Z_{t_i})^2.
\]

\begin{lemma}\label{lem:l3}
For every integer $k\ge0$,
\[
\sum_{i=0}^{n-1} Z_{t_i}^k\,(Z_{t_{i+1}}-Z_{t_i})^2\ \to\ 0
\]
in $L^2(\Omega;\C)$ (i.e.\ real and imaginary parts both tend to $0$ in
$L^2(\Omega)$) as $\delta(\pi)\to0$.
\end{lemma}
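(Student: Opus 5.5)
The plan is to expand the complex square of the increment and exploit the cancellation that is special to planar Brownian motion. Write $\Delta X_i=X_{t_{i+1}}-X_{t_i}$ and $\Delta Y_i=Y_{t_{i+1}}-Y_{t_i}$. Then
\[
(Z_{t_{i+1}}-Z_{t_i})^2=(\Delta X_i)^2-(\Delta Y_i)^2+2i\,\Delta X_i\,\Delta Y_i .
\]
The $(t_{i+1}-t_i)$ contributions of $(\Delta X_i)^2$ and $(\Delta Y_i)^2$ cancel exactly, before any limit is taken. So the sum in the statement becomes $\sum_i Z_{t_i}^k\,\eta_i$, where
\[
\eta_i:=\big[(\Delta X_i)^2-(t_{i+1}-t_i)\big]-\big[(\Delta Y_i)^2-(t_{i+1}-t_i)\big]+2i\,\Delta X_i\Delta Y_i .
\]
Equivalently, $\eta_i=(t_{i+1}-t_i)\,\zeta_i$, where $\zeta_i:=\xi_i^2-\xi_i'^2+2i\,\xi_i\xi_i'$ and $\xi_i,\xi_i'$ are the i.i.d.\ $N(0,1)$ normalized increments of $X$ and $Y$, exactly as in \eqref{e4}.

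The key facts about $\zeta_i$ are the following.
\begin{itemize}
\item $\zeta_i$ is independent of $\mathcal F_{t_i}$, and hence of $Z_{t_i}^k$.
\item $\E\zeta_i=0$.
\item $\E|\zeta_i|^2=\E|\xi_i+i\xi_i'|^4=\E(\xi^2+\xi'^2)^2=8$, a finite constant.
\end{itemize}

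Next I would compute the second moment of $S(\pi):=\sum_i Z_{t_i}^k\eta_i$ exactly, as in the proof of Lemma~\ref{lem:l1}, now for $\E|S(\pi)|^2$ with complex conjugation. Take a cross term $i<j$. The factor $Z_{t_i}^k\eta_i\,\overline{Z_{t_j}^k}$ is $\mathcal F_{t_j}$-measurable, and $\overline{\eta_j}$ is independent of $\mathcal F_{t_j}$ with mean zero. Conditioning on $\mathcal F_{t_j}$ therefore kills every cross term. For the diagonal terms, independence gives
\[
\E|S(\pi)|^2=\sum_i\E|Z_{t_i}|^{2k}\,\E|\zeta_i|^2\,(t_{i+1}-t_i)^2 .
\]
We have $\E|\zeta_i|^2=8$, and $\E|Z_{t}|^{2k}=\E(X_t^2+Y_t^2)^k=C_k t^k\le C_k\max(|a|,|b|)^k$, with $C_k$ finite (a $\chi^2_2$ moment). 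Hence
\[
\E|S(\pi)|^2\le 8C_kb^k\sum_i(t_{i+1}-t_i)^2\le 8C_kb^k(b-a)\,\delta(\pi)\longrightarrow0 .
\]
Since $|\Re S|,|\Im S|\le|S|$, both the real and imaginary parts tend to $0$ in $L^2(\Omega)$. This is uniform over all partitions of a given mesh, so the convergence holds for the net, not just along sequences.

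The only point that needs care is the integrability and measurability bookkeeping behind the cross-term cancellation. $Z_{t_i}^k$ has moments of all orders (it is Gaussian-polynomial), and so do the $\zeta$'s, so every product above is integrable. The conditioning step uses the independent-increments property of the pair $(X,Y)$ with respect to its joint filtration. This is the same estimate as the centred term in Lemma~\ref{lem:l1}. The difference is that no uncentred Riemann-sum piece $\int Z^k\,dt$ survives, because of the exact algebraic cancellation $(\Delta X)^2-(\Delta Y)^2$ in the complex square. That cancellation is the substance of the lemma, and it is what makes the planar It\^o correction vanish.
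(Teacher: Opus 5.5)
Your proposal is correct and follows essentially the paper's route. You expand $(\Delta Z_i)^2$, observe that the deterministic parts of $(\Delta X_i)^2$ and $(\Delta Y_i)^2$ cancel exactly, and bound the resulting centred sum by the second-moment estimate of Lemma~\ref{lem:l1}, giving a second moment of order $\sum_i(t_{i+1}-t_i)^2\le(b-a)\,\delta(\pi)$. The only difference is cosmetic: you handle the single centred complex variable $\zeta_i=(\xi_i+i\xi_i')^2$ at once, computing $\E|S(\pi)|^2$ directly with $\E|\zeta_i|^2=8$ and conditioning explicitly to kill the cross terms, whereas the paper splits it into three centred sums and estimates each separately.
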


\begin{proof}
Write $\Delta X_i=X_{t_{i+1}}-X_{t_i}=\sqrt{t_{i+1}-t_i}\,\xi_i^{(1)}$ and
$\Delta Y_i=Y_{t_{i+1}}-Y_{t_i}=\sqrt{t_{i+1}-t_i}\,\xi_i^{(2)}$, where
$\xi_i^{(1)},\xi_i^{(2)}$, $i=0,\ldots,n-1$, are i.i.d.\ $N(0,1)$, mutually
independent, and each independent of $Z_{t_i}$. Then
\begin{align*}
(\Delta Z_i)^2&=(\Delta X_i)^2-(\Delta Y_i)^2+2i\,\Delta X_i\Delta Y_i\\
&=(t_{i+1}-t_i)\Big[(\xi_i^{(1)})^2-(\xi_i^{(2)})^2+2i\,\xi_i^{(1)}\xi_i^{(2)}\Big].
\end{align*}
Writing $(\xi_i^{(1)})^2=1+\big[(\xi_i^{(1)})^2-1\big]$ and
$(\xi_i^{(2)})^2=1+\big[(\xi_i^{(2)})^2-1\big]$, the two constant terms
$+1,-1$ cancel \emph{exactly} inside $(\Delta Z_i)^2$ — this is the step
with no real-case analogue, since in $\R$ there is only one $\xi_i$, whose
square has nothing to cancel against, and it is exactly this cancellation
that removes the quadratic-variation term from the complex theory. What
remains is
\begin{align*}
\sum_{i=0}^{n-1} Z_{t_i}^k(\Delta Z_i)^2
&=\sum_{i=0}^{n-1} Z_{t_i}^k\big[(\xi_i^{(1)})^2-1\big](t_{i+1}-t_i)\\
&\qquad-\sum_{i=0}^{n-1} Z_{t_i}^k\big[(\xi_i^{(2)})^2-1\big](t_{i+1}-t_i)\\
&\qquad+2i\sum_{i=0}^{n-1} Z_{t_i}^k\,\xi_i^{(1)}\xi_i^{(2)}(t_{i+1}-t_i).
\end{align*}
Each of the three sums on the right is, term by term, centred and
independent of $Z_{t_i}$ (using $\E[(\xi_i^{(1)})^2-1]=\E[(\xi_i^{(2)})^2-1]
=\E[\xi_i^{(1)}\xi_i^{(2)}]=0$), so exactly the estimate used in the proof
of Lemma~\ref{lem:l1} for $\ell=2$ applies to each of them separately
(with $\E[Z_t^k]$-type constants replaced by the finite moment bound
$\sup_{a\le t\le b}\E|Z_t|^{2k}<\infty$, since $Z_t$ is Gaussian): each
sum has second moment bounded by a constant times
$\sum_i(t_{i+1}-t_i)^2\le(b-a)\delta(\pi)\to0$. Hence the whole expression
tends to $0$ in $L^2(\Omega;\C)$.
\end{proof}

\begin{thm}\label{thm:complexIto}
For every integer $m\ge0$,
\[
\int_a^b Z_t^m\,dZ_t:=\lim_{\delta(\pi)\to0}I_0(\pi)=
\frac{Z_b^{m+1}-Z_a^{m+1}}{m+1}\qquad\text{in }L^2(\Omega;\C),
\]
with \emph{no} correction term; and for every polynomial
$P(z)=\sum_j c_jz^j$,
\[
P(Z_b)-P(Z_a)=\int_a^b P'(Z_t)\,dZ_t.
\]
\end{thm}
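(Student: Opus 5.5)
The plan is to follow the real Wiener case of Section~\ref{sec:realWiener} verbatim, with Lemma~\ref{lem:l3} taking over the role of Lemma~\ref{lem:l1}. We start from the partition-independent identity
\[
(m+1)I_0(\pi)+\sum_{k=1}^m\rho_k(\pi)=Z_b^{m+1}-Z_a^{m+1},
\]
and aim to show that every $\rho_k(\pi)\to0$ in $L^2(\Omega;\C)$. Then $I_0(\pi)\to(Z_b^{m+1}-Z_a^{m+1})/(m+1)$ in $L^2(\Omega;\C)$, which is the first claim. The polynomial formula follows by linearity: apply the monomial result with $m$ replaced by $j-1$ to each term $c_jz^j$ of $P$, since $P'(z)=\sum_j jc_jz^{j-1}$. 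The case $m=0$ is trivial, because $I_0(\pi)=Z_b-Z_a$ exactly.

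To handle $\rho_k$, expand exactly as for $W$. Write $Z_{t_{i+1}}=Z_{t_i}+\Delta Z_i$ inside the bracket $Z_{t_{i+1}}^{k-1}+\cdots+Z_{t_i}^{k-1}$ and use the binomial theorem. This gives $\rho_k(\pi)$ as a finite linear combination, with combinatorial coefficients, of sums
\[
\sigma_{p,\ell}(\pi):=\sum_{i=0}^{n-1}Z_{t_i}^p(\Delta Z_i)^\ell,\qquad \ell\ge2,
\]
where $p=m-1-\nu$ and $\ell=\nu+2$ for $\nu\ge0$. The terms with $\ell=2$ tend to $0$ in $L^2(\Omega;\C)$ by Lemma~\ref{lem:l3}. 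This is the essential point: it is the complex cancellation that removes the would-be It\^o correction $k\int Z^{m-1}\,dt$.

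For the terms with $\ell\ge3$, argue as in Lemma~\ref{lem:l1}. Write $\Delta Z_i=\sqrt{t_{i+1}-t_i}\,\zeta_i$ with $\zeta_i=\xi^{(1)}_i+i\xi^{(2)}_i$, which is independent of $Z_{t_i}$ and has all moments finite. Then split $\zeta_i^\ell=(\zeta_i^\ell-\E\zeta_i^\ell)+\E\zeta_i^\ell$.
\begin{itemize}
\item The centred part has orthogonal summands: for $i<j$, condition on $\mathcal F_{t_j}$ and use the independence of $\zeta_j$. Its second moment is therefore bounded by $C\sum_i\E|Z_{t_i}|^{2p}(t_{i+1}-t_i)^\ell\le C'(b-a)\delta(\pi)^{\ell-1}\to0$, using $\sup_{t\le b}\E|Z_t|^{2p}<\infty$.
\item The mean part is $\E\zeta^\ell\sum_iZ_{t_i}^p(t_{i+1}-t_i)^{\ell/2}$. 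By Cauchy--Schwarz or Jensen, as in the $\ell$-even case of Lemma~\ref{lem:l1}, its $L^2$ norm is at most $C\delta(\pi)^{\ell/2-1}\to0$. In fact $\E\zeta^\ell=0$ for every $\ell\ge1$ by rotation invariance of $\zeta$, so this term vanishes outright.
\end{itemize}
Summing the finitely many pieces gives $\rho_k(\pi)\to0$ in $L^2$.

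The main obstacle is bookkeeping rather than substance. In the $L^2$ estimates one must use complex moduli $|\cdot|^2$ instead of squares, so the cross terms are $\E[U_i\overline{U_j}]$. Their vanishing needs $\E[\zeta_j^\ell-\E\zeta_j^\ell]=0$ together with independence of $\zeta_j$ from $\mathcal F_{t_j}$, and not a real-valued symmetry argument. Once this orthogonality is checked, every estimate reduces to the moment bounds already used in Lemma~\ref{lem:l1} and Lemma~\ref{lem:l3}.
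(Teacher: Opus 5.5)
Your proposal is correct and follows the paper's route: the telescoping identity $(m+1)I_0(\pi)+\sum_{k=1}^m\rho_k(\pi)=Z_b^{m+1}-Z_a^{m+1}$, then Lemma~\ref{lem:l3} to kill the quadratic sums, then linearity for $P$. It is in fact more complete than the paper's proof, which asserts that each $\rho_k(\pi)$ is a combination of Lemma~\ref{lem:l3}-type sums only and never addresses the higher-order sums $\sum_iZ_{t_i}^p(\Delta Z_i)^\ell$, $\ell\ge3$, that arise from expanding the factors $Z_{t_{i+1}}^j$; your centred, orthogonal-summand estimate supplies exactly this missing step, and your observation that $\E\zeta^\ell=0$ by rotation invariance gives a second-moment bound $O(\delta(\pi)^{\ell-1})$ uniformly for every $\ell\ge2$, with Lemma~\ref{lem:l3} as the case $\ell=2$.
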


\begin{proof}
Each $\rho_k(\pi)$ is a finite $\C$-linear combination, with
partition-independent coefficients, of sums of the type appearing in
Lemma~\ref{lem:l3} (for the powers $0,\ldots,k-1$ of $Z_{t_i}$), so
$\rho_k(\pi)\to0$ in $L^2(\Omega;\C)$ for every $k=1,\ldots,m$. From
$\sum_kI_k(\pi)=Z_b^{m+1}-Z_a^{m+1}$ and $I_k(\pi)=I_0(\pi)+\rho_k(\pi)$,
\[
(m+1)I_0(\pi)=Z_b^{m+1}-Z_a^{m+1}-\sum_{k=1}^m\rho_k(\pi)\ \longrightarrow\
Z_b^{m+1}-Z_a^{m+1}
\]
in $L^2(\Omega;\C)$, which gives the first identity. Linearity extends it
to any polynomial $P$.
\end{proof}

\begin{remark}
Theorem~\ref{thm:complexIto} is the elementary, Riemann-sum counterpart of
the classical fact that a holomorphic function of planar Brownian motion
is a (local) martingale. Writing $P(z)=u(x,y)+iv(x,y)$ for the real and
imaginary parts of a holomorphic polynomial, the Cauchy--Riemann equations
make $u,v$ harmonic ($\Delta u=\Delta v=0$), so the two-dimensional
It\^o formula applied to $u(X_t,Y_t)$ would read
\[
u(X_b,Y_b)-u(X_a,Y_a)=\int_a^b\nabla u(X_t,Y_t)\cdot(dX_t,dY_t)
+\frac12\int_a^b\Delta u(X_t,Y_t)\,dt,
\]
and the correction term $\frac12\Delta u\,dt$ vanishes identically because
$u$ is harmonic — likewise for $v$. The computation in the proof of
Lemma~\ref{lem:l3} reproves this cancellation directly at the Riemann-sum
level, without invoking the Laplacian or harmonicity as a black box: it is
simply the observation that $\E[(\xi_i^{(1)})^2]=\E[(\xi_i^{(2)})^2]=1$
cancel against each other inside the complex square $(\Delta Z_i)^2$,
whereas in the real one-dimensional case $\E[\xi_i^2]=1$ has nothing to
cancel against and survives as the quadratic-variation term
$\int_a^bW_t^{m-1}\,dt$ of the real It\^o formula
(Section~\ref{sec:realWiener}). In this
sense the vanishing of the correction term for holomorphic $P$ is, at
bottom, the same elementary cancellation
$\lambda^{m+1}-\mu^{m+1}=(\lambda-\mu)(\cdots)$ used throughout this note,
specialised to $\C$: the vanishing is reflected, at the Riemann-sum
level, in the cancellation occurring inside the complex square
$(\Delta Z_i)^2$, rather than in any analytic property of $P$ itself.
\end{remark}

\subsection{Generalization: conformal martingales}

The cancellation established in Lemma~\ref{lem:l3} is not special to the
independent pair $(X,Y)$: it holds under a purely quadratic-variation
condition on the pair $(A,B)$ making up $M=A+iB$, stated below, and the
elementary, Riemann-sum proof above extends to it with no new ideas —
only a bilinear version of Lemma~\ref{lem:l2}. We state the hypotheses
first, in their weakest (abstract, non-probabilistic) form, and only
afterwards attach the classical name to the resulting object, to avoid
any suggestion that the martingale property itself is being used.

Let $A_t,B_t$ be continuous real processes such that, in probability as
$\delta(\pi)\to0$ and for every $t\in[a,b]$,
\[
\sum_i(\Delta A_i)^2\to\langle A\rangle_t,\qquad
\sum_i(\Delta B_i)^2\to\langle B\rangle_t,
\]
\[
\sum_i\big(\Delta(A+B)_i\big)^2\to\langle A+B\rangle_t,
\]
with $\langle A\rangle,\langle B\rangle,\langle A+B\rangle$ continuous and
non-decreasing (the hypotheses of Lemma~\ref{lem:l2}, applied
\emph{separately} to $A$, to $B$, and to $A+B$ --- three independent
hypotheses, not four: we do \emph{not} separately assume existence of a
limit for the cross sum $\sum_i\Delta A_i\Delta B_i$, since it is
constructed below by polarization from these three, and need not be
posited in advance). Set $M_t:=A_t+iB_t$, and \emph{define} the
quadratic covariation by polarization,
\[
\langle A,B\rangle_t:=\tfrac12\big(\langle A+B\rangle_t-\langle
A\rangle_t-\langle B\rangle_t\big).
\]

\begin{lemma}\label{lem:bilinear}
For any continuous adapted process $\Phi_t$,
\[
\sum_i\Phi(t_i)\,\Delta A_i\Delta B_i\ \xrightarrow[\delta(\pi)\to0]{P}\
\int_a^b\Phi_t\,d\langle A,B\rangle_t.
\]
\end{lemma}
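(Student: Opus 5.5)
The plan is to reduce the cross sum to three squared sums by the polarization identity, applied at the level of each increment:
\[
\Delta A_i\,\Delta B_i=\tfrac12\Big[\big(\Delta(A+B)_i\big)^2-(\Delta A_i)^2-(\Delta B_i)^2\Big],
\]
which holds exactly, term by term, for every partition $\pi$. Multiplying by $\Phi(t_i)$ and summing gives
\[
\sum_i\Phi(t_i)\Delta A_i\Delta B_i=\tfrac12\Big[\sum_i\Phi(t_i)\big(\Delta(A+B)_i\big)^2-\sum_i\Phi(t_i)(\Delta A_i)^2-\sum_i\Phi(t_i)(\Delta B_i)^2\Big].
\]
So it suffices to show that each of the three weighted sums converges in probability to the corresponding Stieltjes integral $\int_a^b\Phi_t\,d\langle\cdot\rangle_t$. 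Then linearity of convergence in probability (a finite sum of terms, each converging in probability, converges in probability) and the definition $\langle A,B\rangle=\tfrac12(\langle A+B\rangle-\langle A\rangle-\langle B\rangle)$ give the claim. Here $\int\Phi\,d\langle A,B\rangle$ is understood as the same combination of integrals against the three non-decreasing processes, that is, against a difference of monotone functions.

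For each of the three sums I will invoke the $\ell=2$ part of Lemma~\ref{lem:l2}, applied in turn to $X=A$, $X=B$ and $X=A+B$. The one adjustment is that Lemma~\ref{lem:l2} is stated with weight $X_{t_i}^k$, whereas here the weight is a general continuous process $\Phi$. Its proof, however, never used the specific form $X^k$. Step~1 (step-function weights, including the extension to arbitrary $\pi$) is unchanged. In Step~2 the only inputs are the a.s.\ uniform continuity of the weight on $[a,b]$, giving a deterministic $\eta(\eps,\rho)$ with $P(\omega_\Phi(\eta)>\eps)<\rho$, and the tightness of $\sum_i(\Delta X_i)^2$. Both remain available with $\Phi$ in place of $X^k$. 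Adaptedness of $\Phi$ is not even needed for this argument. So I will rerun Steps~1--2 verbatim with $\phi(t):=\Phi_{s_j}$ on $[s_j,s_{j+1})$. The hypotheses on $A$, $B$, $A+B$ are assumed for every $t$, and hence on every subinterval $[s,t]$ by differencing, which is what Step~1 requires.

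The main obstacle is this bookkeeping point: the hypotheses are stated on $[a,t]$ rather than on arbitrary $[s,t]$ along partitions containing $s,t$. I would handle it by noting that for a partition containing $s$, the sum over $[s,t]$ is the difference of the sums over $[a,t]$ and $[a,s]$. Each of these converges in probability, provided the standing hypothesis is read, as in Lemma~\ref{lem:l2}, along partitions containing the relevant point. A secondary point is to make sure the limit integral is well defined for the non-monotone $\langle A,B\rangle$. This is immediate, since it is a difference of continuous non-decreasing functions, hence of bounded variation, and Theorem~\ref{thm:rs-continuous} applies pathwise to the continuous integrand $\Phi$.
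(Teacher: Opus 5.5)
Your proposal is correct and is essentially the paper's proof. Like the paper, you polarize $\Delta A_i\Delta B_i$ into three weighted squared sums for $A+B$, $A$, $B$, handle each by rerunning Steps~1--2 of Lemma~\ref{lem:l2} with the continuous weight $\Phi$ in place of $X_t^k$, and combine by linearity and the definition of $\langle A,B\rangle$; your extra remarks (recovering the sub-interval hypotheses by differencing, noting that adaptedness of $\Phi$ is never used, and reading $\int_a^b\Phi_t\,d\langle A,B\rangle_t$ as a pathwise Stieltjes integral against a difference of monotone functions) only make explicit points the paper leaves implicit.
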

\begin{proof}
By polarization, with $U:=A+B$,
\[
2\,\Delta A_i\Delta B_i=(\Delta U_i)^2-(\Delta A_i)^2-(\Delta B_i)^2.
\]
By hypothesis $U=A+B$ satisfies the hypothesis of Lemma~\ref{lem:l2}
directly (with limit $\langle U\rangle=\langle A+B\rangle$), as do $A,B$
separately. That lemma's proof (Steps~1--2) uses only the continuity of
the weight process, never its specific form as a power $X_t^k$: the
identical argument, with $\Phi$ in place of $X_t^k$, gives
$\sum_i\Phi(t_i)(\Delta U_i)^2\to\int_a^b\Phi_t\,d\langle U\rangle_t$ and
likewise for $A,B$, all in probability. Hence
\[
\sum_i\Phi(t_i)\,\Delta A_i\Delta B_i
=\tfrac12\Big[\sum_i\Phi(t_i)(\Delta U_i)^2-\sum_i\Phi(t_i)(\Delta A_i)^2
\]
\[
-\sum_i\Phi(t_i)(\Delta B_i)^2\Big]
\]
\[
\longrightarrow\tfrac12\int_a^b\Phi_t\,d\big(\langle U\rangle_t-\langle
A\rangle_t-\langle B\rangle_t\big)=\int_a^b\Phi_t\,d\langle A,B\rangle_t,
\]
using the definition of $\langle A,B\rangle$ in the last step.
\end{proof}

\begin{thm}\label{thm:conformalIto}
If $\langle M,M\rangle_t:=\langle A\rangle_t-\langle B\rangle_t+2i\langle
A,B\rangle_t\equiv0$ for all $t$ (equivalently, $\langle A\rangle_t=
\langle B\rangle_t$ and $\langle A,B\rangle_t\equiv0$), then for every
integer $m\ge0$,
\[
\int_a^b M_t^m\,dM_t:=\lim_{\delta(\pi)\to0}I_0(\pi)=
\frac{M_b^{m+1}-M_a^{m+1}}{m+1}\qquad\text{in probability},
\]
with \emph{no} correction term; and for every polynomial $P$,
\[
P(M_b)-P(M_a)=\int_a^b P'(M_t)\,dM_t.
\]
\end{thm}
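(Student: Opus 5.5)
We follow the proof of Theorem~\ref{thm:complexIto} verbatim, replacing Lemma~\ref{lem:l3} by a version for $M$ obtained from Lemma~\ref{lem:bilinear}.

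\emph{Step 1: the telescoping identity.} We start from the purely algebraic identity in $\C$,
\[
\sum_{k=0}^m I_k(\pi)=M_b^{m+1}-M_a^{m+1},
\]
where
\[
I_k(\pi)=\sum_i M_{t_i}^{m-k}M_{t_{i+1}}^k\,\Delta M_i .
\]
Writing $I_k(\pi)=I_0(\pi)+\rho_k(\pi)$ gives
\[
(m+1)I_0(\pi)=M_b^{m+1}-M_a^{m+1}-\sum_{k\ge1}\rho_k(\pi).
\]
Expanding $M_{t_{i+1}}^j=(M_{t_i}+\Delta M_i)^j$ binomially, exactly as in the real Wiener case, shows that each $\rho_k(\pi)$ is a finite combination, with partition-independent coefficients, of sums of the form
\[
\sum_i M_{t_i}^{p}(\Delta M_i)^{\nu+2},\qquad \nu\ge0 .
\]
It therefore suffices to show that every such sum tends to $0$ in probability. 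Finite sums of sequences converging in probability converge in probability, so the conclusion then follows. Linearity extends the result to any polynomial $P$.

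\emph{Step 2: the case $\nu=0$, which is the key cancellation.} We expand
\[
(\Delta M_i)^2=(\Delta A_i)^2-(\Delta B_i)^2+2i\,\Delta A_i\Delta B_i .
\]
Take $\Phi=\Re M^p$ and $\Phi=\Im M^p$; both are continuous adapted processes.

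\begin{itemize}
\item Lemma~\ref{lem:l2}, run with a general continuous weight as noted in the proof of Lemma~\ref{lem:bilinear}, gives
\[
\sum_i\Phi(t_i)(\Delta A_i)^2\to\int_a^b\Phi\,d\langle A\rangle,
\]
and similarly for $B$.
\item Lemma~\ref{lem:bilinear} gives
\[
\sum_i\Phi(t_i)\Delta A_i\Delta B_i\to\int_a^b\Phi\,d\langle A,B\rangle .
\]
\end{itemize}

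Combining these by complex linearity,
\[
\sum_i M_{t_i}^p(\Delta M_i)^2\to\int_a^b M_t^p\,d\big(\langle A\rangle_t-\langle B\rangle_t+2i\langle A,B\rangle_t\big)=\int_a^b M^p_t\,d\langle M,M\rangle_t .
\]
This limit is $0$ by hypothesis, since $\langle A\rangle=\langle B\rangle$ and $\langle A,B\rangle\equiv0$ make the integrator identically zero.

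\emph{Step 3: the case $\nu\ge1$.} We argue as in Step~3 of Lemma~\ref{lem:l2}:
\[
\Big|\sum_i M_{t_i}^p(\Delta M_i)^{\nu+2}\Big|\le\eta(\pi)^\nu\sum_i|M_{t_i}|^p|\Delta M_i|^2,
\qquad \eta(\pi):=\max_i|\Delta M_i| .
\]
Here $\eta(\pi)\to0$ a.s. by uniform continuity of the paths. Since $|\Delta M_i|^2=(\Delta A_i)^2+(\Delta B_i)^2$, Lemma~\ref{lem:l2} with the weight $|M|^p$ shows that the second factor converges in probability to
\[
\int_a^b|M_t|^p\,d(\langle A\rangle+\langle B\rangle),
\]
so it is tight. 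The product of a factor tending to $0$ and a tight factor tends to $0$ in probability.

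\emph{Main obstacle.} The delicate point is the extension of Lemma~\ref{lem:l2} to general continuous weights, here $\Re M^p$, $\Im M^p$ and $|M|^p$, in place of powers of the integrator itself. The proof of Lemma~\ref{lem:bilinear} already asserts that Steps~1--2 of Lemma~\ref{lem:l2} only use continuity of the weight, and we rely on that. We must also check that the hypothesis of Lemma~\ref{lem:l2} holds on every subinterval $[s,t]$, not only on $[a,b]$. The standing assumptions are stated "for every $t$", and subtracting the statements for $t$ and $s$ gives the subinterval version for partitions containing $s,t$. Step~1 of Lemma~\ref{lem:l2} needs exactly this, and I would make it explicit.
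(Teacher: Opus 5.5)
Your proof is correct and follows the paper's route: the telescoping identity, the splitting $(\Delta M_i)^2=(\Delta A_i)^2-(\Delta B_i)^2+2i\,\Delta A_i\Delta B_i$, and Lemma~\ref{lem:l2} (with a general continuous weight) together with Lemma~\ref{lem:bilinear}, giving the limit $\int_a^b M_t^p\,d\langle M,M\rangle_t=0$. It is in fact slightly more complete than the paper's proof: the paper asserts that $\rho_k(\pi)$ is a combination of sums $\sum_i M_{t_i}^j(\Delta M_i)^2$ only, overlooking the factors $M_{t_{i+1}}$, whereas your binomial expansion and Step~3 bound $\eta(\pi)^\nu\sum_i|M_{t_i}|^p|\Delta M_i|^2$ handle the higher-order terms $\nu\ge1$ explicitly, and you also check the subinterval form of the hypothesis of Lemma~\ref{lem:l2}.
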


\begin{proof}
As in the proof of Theorem~\ref{thm:complexIto}, each $\rho_k(\pi)$ is a
finite $\C$-linear combination of sums $\sum_iM_{t_i}^j(\Delta M_i)^2$,
$j=0,\ldots,k-1$. Writing $(\Delta M_i)^2=(\Delta A_i)^2-(\Delta
B_i)^2+2i\,\Delta A_i\Delta B_i$ and applying Lemma~\ref{lem:l2} (for the
first two sums, with weight $\Phi=M_t^j$ — again valid for any continuous
weight, not only powers of the process whose increments are squared) and
Lemma~\ref{lem:bilinear} (for the cross term) gives
\begin{align*}
\sum_iM_{t_i}^j(\Delta M_i)^2\ &\xrightarrow[\delta(\pi)\to0]{P}\
\int_a^bM_t^j\,d\langle A\rangle_t-\int_a^bM_t^j\,d\langle B\rangle_t\\
&\qquad+2i\int_a^bM_t^j\,d\langle A,B\rangle_t
=\int_a^bM_t^j\,d\langle M,M\rangle_t=0,
\end{align*}
using the hypothesis $\langle M,M\rangle\equiv0$ in the last equality. Hence
$\rho_k(\pi)\to0$ in probability for $k=1,\ldots,m$, and the conclusion
follows exactly as in the proof of Theorem~\ref{thm:complexIto}, with
convergence in probability in place of $L^2(\Omega;\C)$.
\end{proof}

\begin{definition}[\cite{GS}]\label{def:conformal}
When $A,B$ are honest continuous local martingales (with respect to some
filtration) satisfying $\langle M,M\rangle\equiv0$ as in
Theorem~\ref{thm:conformalIto}, $M=A+iB$ is called a \emph{conformal
(local) martingale}.
\end{definition}

\begin{remark}[On the terminology]
Definition~\ref{def:conformal} attaches the classical name to the
classical situation, but Theorem~\ref{thm:conformalIto} itself is proved
under strictly weaker hypotheses: only the quadratic-variation
convergence properties of Lemma~\ref{lem:l2} (in probability, along the
net $\delta(\pi)\to0$) for $A$, $B$, and $A+B$, together with
$\langle M,M\rangle\equiv0$ -- never the martingale property itself
(adaptedness and the conditional-expectation identity) for $A$ or $B$.
The theorem therefore applies to any pair $(A,B)$ with the stated
quadratic-variation behaviour and vanishing bracket, whether or not
$A,B$ are genuine local martingales; Definition~\ref{def:conformal} is
only invoked when they happen to be, in which case
Theorem~\ref{thm:conformalIto} recovers the classical statement about
conformal local martingales of \cite{GS}.
\end{remark}

\begin{remark}
Theorem~\ref{thm:complexIto} is the special case $A=X$, $B=Y$ independent
Wiener processes: conformality is then immediate from independence
($\langle X\rangle_t=\langle Y\rangle_t=t$, $\langle X,Y\rangle_t\equiv0$),
and the explicit Gaussian moment computation in the proof of
Lemma~\ref{lem:l3} upgrades the mode of convergence from probability to
$L^2(\Omega;\C)$ — an upgrade the abstract hypotheses of this subsection
do not, by themselves, provide.
\end{remark}

\begin{remark}[The non-conformal case]
Without the conformal hypothesis, the same computation gives instead
\[
\rho_k(\pi)\ \xrightarrow[\delta(\pi)\to0]{P}\ k\int_a^bM_t^{m-1}\,d\langle
M,M\rangle_t,
\]
and, exactly as in Section~\ref{sec:realWiener},
\[
M_b^{m+1}-M_a^{m+1}=(m+1)\int_a^bM_t^m\,dM_t+\frac{m(m+1)}2\int_a^bM_t^{m-1}\,d\langle
M,M\rangle_t,
\]
so that, for any polynomial $P$,
\[
P(M_b)-P(M_a)=\int_a^bP'(M_t)\,dM_t+\frac12\int_a^bP''(M_t)\,d\langle
M,M\rangle_t,
\]
the classical complex It\^o formula, driven by the \emph{complex}
bracket $\langle M,M\rangle$ (bilinear, not the positive-definite
$\langle M,\overline M\rangle=\langle A\rangle+\langle B\rangle$).
Theorem~\ref{thm:conformalIto} is the case $\langle M,M\rangle\equiv0$;
setting $B\equiv0$ instead recovers the real semimartingale formula of
Section~\ref{sec:realWiener}, since then $M=A$ and $\langle
M,M\rangle=\langle A\rangle$. The two cases developed earlier in this
note — the real semimartingale correction term and its complete
cancellation for planar Brownian motion — are thus the two extremes,
$B\equiv0$ and $M$ conformal, of a single formula.
\end{remark}

\begin{remark}[Concluding remark: the quadratic remainder, one more time]
This is the point flagged in the abstract as the paper's most
distinctive single observation, and it is worth restating plainly now
that all the pieces are in place. Every case in this note rests on the
same algebraic telescoping identity and the same quadratic remainder
term $\sum_i(\text{weight})_i(\Delta_i)^2$; what varies is only that
term's fate. On $\R$ it vanishes identically (Part~I). On $\C$, for
H\"older paths, and along the strange nets of Section~2, it vanishes in
the limit precisely because the standing hypothesis $Q(\pi)\to0$ forces
it to (Part~II). For a real continuous semimartingale it neither
vanishes nor is negligible: it survives as the quadratic variation and
becomes, verbatim, the second-derivative correction of It\^o's formula.
And for a planar Brownian motion, or more generally a conformal
martingale, it is present in exactly the same algebraic form as in the
real case -- yet cancels completely, for the purely algebraic reason
that $\E[(\xi^{(1)})^2]=\E[(\xi^{(2)})^2]$ inside the complex square
$(\Delta Z_i)^2$, with nothing analogous for a single real Gaussian
increment to cancel against. The classical fact that a holomorphic
function of planar Brownian motion is again a local martingale, and the
classical It\^o correction term for a real diffusion, are ordinarily
presented as two separate facts about two different processes; the
present construction exhibits them as the same quadratic remainder,
evaluated in $\R$ and in $\C$ respectively.
\end{remark}

\appendix
\section{Uniform convergence and termwise integration}

We record here, for completeness, the classical fact used implicitly in
Section~\ref{sec:cont-riemann} to pass from polynomials to continuous functions, together
with an illustrative example.

\begin{thm}
Let $\{f_n\}$ be functions $f_n\colon[a,b]\to\R$, each Riemann integrable,
with $f_n\to f$ uniformly on $[a,b]$. Then $f$ is Riemann integrable and
$\int_a^bf_n(x)\,dx\to\int_a^bf(x)\,dx$.
\end{thm}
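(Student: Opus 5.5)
The plan is to argue through the Cauchy criterion for the net of Riemann sums, using the directed set $({\cal P}(a,b),\delta)$ of Section~1. Fix $\eps>0$ and choose $n$ with $\|f_n-f\|_\infty\le\eps$. Since $f_n$ is Riemann integrable, its Riemann sums converge, so there is $\eta>0$ such that
\[
\Big|\sum_i f_n(\tau_i)(t_{i+1}-t_i)-\int_a^b f_n\Big|\le\eps
\]
for every tagged partition with $\delta(\pi)<\eta$. Since $|f(\tau_i)-f_n(\tau_i)|\le\eps$ termwise, the Riemann sums $S_f(\pi)$ of $f$ and $S_{f_n}(\pi)$ of $f_n$ over the same tagged partition satisfy $|S_f(\pi)-S_{f_n}(\pi)|\le\eps(b-a)$. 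Hence any two tagged partitions $\pi,\pi'$ of mesh $<\eta$ give $|S_f(\pi)-S_f(\pi')|\le2\eps(b-a)+2\eps$. The net of Riemann sums of $f$ is therefore Cauchy, and by completeness of $\R$ it converges. This is exactly the membership $f\in{\cal R}$. Boundedness of $f$, needed for the definition in Section~1, follows from $\|f\|_\infty\le\|f_n\|_\infty+\eps$ together with boundedness of $f_n$.

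To avoid handling Cauchy nets abstractly, I would instead first show that $v_n:=\int_a^b f_n$ is Cauchy. Passing to the limit in the inequality for Riemann sums gives
\[
|v_n-v_p|\le(b-a)\|f_n-f_p\|_\infty,
\]
so $v_n\to v$ for some $v\in\R$. Then I would use the same three-term estimate as in Section~\ref{sec:cont-riemann}:
\[
|S_f(\pi)-v|\le(b-a)\|f-f_n\|_\infty+|S_{f_n}(\pi)-v_n|+|v_n-v|.
\]
Letting $\delta(\pi)\to0$ first and then $n\to\infty$ yields $S_f(\pi)\to v$. This simultaneously proves that $f\in{\cal R}$ and that $\int_a^b f=v=\lim_n\int_a^b f_n$. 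The convergence $\int_a^b f_n\to\int_a^b f$ then also follows from $|\int_a^b(f_n-f)|\le(b-a)\|f_n-f\|_\infty$, since $f_n-f$ is now known to be integrable and the Riemann sums pass this bound to the limit.

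The only delicate point is the order of limits. For each fixed $n$, the mesh threshold $\eta$ depends on $n$. The argument works because the other two error terms, $(b-a)\|f-f_n\|_\infty$ and $|v_n-v|$, do not depend on $\pi$, so the $\limsup$ over the net can be taken with $n$ held fixed. Uniform convergence is used precisely to make the first term independent of the tags.
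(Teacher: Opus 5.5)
Your proposal is correct. Its first paragraph is the paper's own proof: the paper bounds $|S(f,\sigma_1)-S(f,\sigma_2)|\le|S(f_n,\sigma_1)-S(f_n,\sigma_2)|+2\eps(b-a)$ and concludes that the net of Riemann sums of $f$ is Cauchy, hence convergent to some $I$. Only afterwards does it identify $\lim_n I_n=I$, by letting the mesh go to $0$ in $|S(f_n,\sigma)-S(f,\sigma)|<\eps(b-a)$.

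The route you say you would actually use is organized differently. You first build the candidate limit $v=\lim_n\int_a^bf_n$ from the sequential estimate $|v_n-v_p|\le(b-a)\|f_n-f_p\|_\infty$. You then prove that the net converges to this explicit target via the three-term $\limsup$ estimate. This buys three things:
\begin{itemize}
\item It uses only sequential completeness of $\R$, not the convergence of Cauchy nets, which the paper uses without comment.
\item It delivers $f\in{\cal R}$ and $\int_a^bf_n\to\int_a^bf$ in one stroke.
\item It is verbatim the Bernstein-polynomial argument of Section~\ref{sec:cont-riemann}, with $P_n^\phi$ replaced by $f_n$ and Lemma~\ref{lem:real1} replaced by the integrability of $f_n$. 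This makes explicit that the appendix theorem is exactly what that section relies on.
\end{itemize}
The paper's route, in turn, needs no preliminary estimate on $v_n-v_p$, and it establishes convergence of the Riemann sums of $f$ before any limit value is named.

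Two small remarks. Your check that $f$ is bounded, which the paper's definition of ${\cal R}$ requires, is a detail the paper's proof omits. Your closing derivation of $\int_a^bf_n\to\int_a^bf$ from $|\int_a^b(f_n-f)|\le(b-a)\|f_n-f\|_\infty$ is redundant, since $v_n\to v=\int_a^bf$ already holds by construction.
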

\begin{proof}
Write $I_n=\int_a^bf_n(x)\,dx$ and, for a tagged partition $\sigma$ with
tags $\xi_i$, $S(g,\sigma)=\sum_ig(\xi_i)(x_{i+1}-x_i)$. Fix $\eps>0$ and
$n_\eps$ such that $|f_n(x)-f(x)|<\eps$ for $n\ge n_\eps$ and all
$x\in[a,b]$. Then, for every $\sigma$ and $n\ge n_\eps$,
\begin{equation}\label{eq:app1}
|S(f_n,\sigma)-S(f,\sigma)|\le\sum_i|f_n(\xi_i)-f(\xi_i)|(x_{i+1}-x_i)<\eps(b-a).
\end{equation}
For two partitions $\sigma_1,\sigma_2$ and $n\ge n_\eps$,
\[
|S(f,\sigma_1)-S(f,\sigma_2)|
\le|S(f_n,\sigma_1)-S(f_n,\sigma_2)|+2\eps(b-a),
\]
and since $f_n$ is Riemann integrable, $|S(f_n,\sigma_1)-S(f_n,\sigma_2)|\to0$
as $|\sigma_1|,|\sigma_2|\to0$. Hence $\{S(f,\sigma)\}$ is Cauchy as
$|\sigma|\to0$ and converges to some $I=:\int_a^bf(x)\,dx$; letting
$|\sigma|\to0$ in \eqref{eq:app1} gives $|I_n-I|\le\eps(b-a)$ for
$n\ge n_\eps$, so $I_n\to I$.
\end{proof}

\begin{remark}[An example]
Let $f_n(x)=x-\dfrac{x^3}{3!}+\cdots+(-1)^{n-1}\dfrac{x^{2n-1}}{(2n-1)!}$.
Then $f_n\to\sin x$ uniformly on bounded sets, so
$\int_a^bf_n(x)\,dx\to\int_a^b\sin(x)\,dx$. On the other hand, by
\eqref{real:eq0} applied term by term,
\begin{align*}
\int_a^bf_n(x)\,dx
&=\Big[\frac{b^2}{2!}-\frac{b^4}{4!}+\cdots\Big]-\Big[\frac{a^2}{2!}-\frac{a^4}{4!}+\cdots\Big]\\
&=\Big[1-\cos b+o(1)\Big]-\Big[1-\cos a+o(1)\Big],
\end{align*}
so that, in the limit, $\int_a^b\sin(x)\,dx=\cos a-\cos b$, recovering the
elementary formula directly from the argument of
Section~\ref{sec:real-key}.
\end{remark}

\end{document}